\DeclareMathAlphabet{\mathbf}{OT1}{cmr}{bx}{it}
\newcommand{\vb}{{\mathbf b}}
\newcommand{\vc}{{\mathbf c}}
\newcommand{\ve}{{\mathbf e}}
\newcommand{\vu}{{\mathbf u}}
\newcommand{\vv}{{\mathbf v}}
\newcommand{\vw}{{\mathbf w}}
\newcommand{\vx}{{\mathbf x}}
\newcommand{\vy}{{\mathbf y}}
\newcommand{\vnull}{\boldsymbol{0}}
\newcommand{\spK}{{\cal K}}
\renewcommand{\d}{\,\mathrm{d}}
\newcommand{\R}{\mathbb{R}}
\newcommand{\N}{\mathbb{N}}
\newcommand{\C}{\mathbb{C}}
\newcommand{\Cn}{\mathbb{C}^n}
\newcommand{\Cnn}{\mathbb{C}^{n \times n}}
\newcommand{\Cmm}{\mathbb{C}^{m \times m}}
\newcommand{\dmu}{\d\mu(t)}
\newcommand{\calU}{\mathcal{U}}
\newcommand{\calV}{\mathcal{V}}
\newcommand{\calW}{W}
\newcommand{\calO}{\mathcal{O}}
\newcommand{\calA}{\mathcal{A}}
\newcommand{\calT}{\mathcal{T}}
\newcommand{\bbD}{\mathbb{D}}
\newcommand{\bbE}{\mathbb{E}}
\newcommand{\barC}{\overline{\mathbb{C}}}
\DeclareMathOperator{\nnz}{nnz}
\DeclareMathOperator{\dist}{dist}
\DeclareMathOperator{\Span}{span}
\newcommand{\spec}{\Lambda}
\DeclareMathOperator{\trace}{trace}
\newtheorem{remark}[theorem]{Remark}
\let\oldremark\remark
\renewcommand{\remark}{\oldremark\normalfont}
\newcommand{\lmin}{\lambda_{\min}}
\newcommand{\lmax}{\lambda_{\max}}
\newtheorem{example}[theorem]{Example}
\let\oldexample\example
\renewcommand{\example}{\oldexample\normalfont}
\title{Low-rank updates of matrix functions}
\author{Bernhard Beckermann\thanks{Laboratoire Painlev\'e UMR 8524, UFR Math\'ematiques, Univ.\ Lille, 59655 Villeneuve d'Ascq, France. E-mail: {\tt bbecker@math.univ-lille1.fr}. Supported  in  part  by  the  Labex  CEMPI (ANR-11-LABX-0007-01).} \and Daniel Kressner\thanks{MATHICSE-ANCHP, \'Ecole Polytechnique F\'ed\'erale de Lausanne, Station 8, 1015 Lausanne, Switzerland. E-mail: \texttt{daniel.kressner@epfl.ch}} \and Marcel Schweitzer\thanks{MATHICSE-ANCHP, \'Ecole Polytechnique F\'ed\'erale de Lausanne, Station 8, 1015 Lausanne, Switzerland. E-mail: \texttt{marcel.schweitzer@epfl.ch}. The work of Marcel Schweitzer has been supported by the SNSF project \emph{Low-rank updates of matrix functions and fast eigenvalue solvers}.}}
\date{\today}
\begin{document}
\renewcommand{\thefootnote}{\fnsymbol{footnote}}
\maketitle \pagestyle{myheadings} \thispagestyle{plain}
\markboth{B. BECKERMANN, D. KRESSNER, AND M. SCHWEITZER}{LOW-RANK UPDATES OF MATRIX FUNCTIONS}

\begin{abstract}
We consider the task of updating a matrix function $f(A)$ when the matrix $A\in\Cnn$ is subject to a low-rank modification. In other words, we aim at approximating $f(A+D)-f(A)$ for a matrix $D$ of rank $k \ll n$. The approach proposed in this paper attains efficiency by projecting onto tensorized Krylov subspaces produced by matrix-vector multiplications with $A$ and $A^*$. We prove the approximations obtained from $m$ steps of the proposed methods are exact if $f$ is a polynomial of degree at most $m$ and use this as a basis for proving a variety of convergence results, in particular for the matrix exponential and for Markov functions. We illustrate the performance of our method by considering various examples from network analysis, where our approach can be used to cheaply update centrality and communicability measures. 
\end{abstract}

\begin{keywords}
matrix function, low-rank update, Krylov subspace method, tensorized Krylov subspace, matrix exponential, Markov function, graph communicability
\end{keywords}

\begin{AMS}
15A16, 65D30, 65F30, 65F60
\end{AMS}

\section{Introduction}

This work is concerned with the problem of updating a matrix function $f(A)$ when $A\in \C^{n\times n}$ is
subject to a low-rank modification.
More specifically, given $D \in \C^{n\times n}$ of rank $r\ll n$ we aim at computing
the difference
\begin{equation} \label{eq:matrixfunupdate}
f(A + D) -  f(A),
\end{equation}
at a cost significantly smaller than the cost of computing $f(A + D)$ from scratch.
Throughout this work, we assume that $f:\Omega \to \C$ is analytic on some domain $\Omega$
containing $\Lambda(A)$ and $\Lambda(A+D)$, the spectra of $A$ and $A+D$.

The generality of~\eqref{eq:matrixfunupdate} includes a wide scope of applications.
We will particularly focus on
measuring network properties, such as centrality and communicability, via applying the matrix exponential to the adjacency matrix of an undirected graph~\cite{EstradaHigham2010}.
Removing/inserting individual edges or nodes in the graph corresponds to rank-$2$ updates of $A$. Being able to solve~\eqref{eq:matrixfunupdate} efficiently therefore allows to quickly update these measures.
In fact, this task is explicitly needed when measuring the betweenness of a node~\cite[Sec. 2.3]{EstradaHigham2010}.

For $f(z) = z^{-1}$, the well-known Sherman-Morrison-Woodbury formula
implies that the difference~\eqref{eq:matrixfunupdate} has rank $r$ and can be directly obtained from applying $f(A) = A^{-1}$
to the low-rank factors of $D$. In particular, when $D$ has rank $1$ and can be written as $D = \vb\vc^*$ with vectors $\vb$ and $\vc$, we have
\begin{equation} \label{eq:sherman}
 (A + \vb\vc^*)^{-1} - A^{-1} = -\frac{1}{1+\vc^* A^{-1} \vb} A^{-1} \vb\vc^* A^{-1}.
\end{equation}
Unless $A$ is a scalar multiple of the identity matrix~\cite[Theorem 1.35]{Higham2008}, there is no such simple relation
between $f(A)$ and $f(A+\vb\vc^*)$ for a general analytic function $f$. In the case of a rational function $f \equiv r$ of degree $\delta$, 
Bernstein and Van Loan~\cite{BernsteinVanLoan2000} show that $r(A + \vb\vc^*) - r(A)$ has rank at most $\delta$ and provide an explicit formula for the rank-$\delta$ correction. This construction is based on explicit Krylov bases and potentially prone to numerical instability for larger degrees.

The Cauchy integral representation \begin{equation} \label{eq:contourintro} f(A) = \frac{1}{2\pi \mathrm{i}} \int_\Gamma f(z)
 (zI - A)^{-1}  \text{d}z\end{equation} provides a link between matrix functions and (shifted) inverses. In~\cite{Stange2011}, a combination of numerical quadrature with~\eqref{eq:sherman} has been explored for approximating $\exp(A + \vb\vc^*) -  \exp(A)$. However, such an approach suffers from a number of drawbacks. Most importantly, the choice of a contour $\Gamma$ that is equally good for $A$ and $A+\vb\vc^*$ is highly nontrivial, in particular in the non-Hermitian case. Also, the evaluation of the approximation amounts to the solution of a differently shifted linear system for each quadrature point. Although not used in our algorithms, integral representations will play a role in their convergence analysis, see Section~\ref{sec:convergence_integral}.

The approach proposed in this work avoids the need for choosing a contour or an explicit rational approximation of $f$. Inspired by existing Krylov subspace methods for matrix equations~\cite{Simoncini2016} and linear systems with tensor product structure~\cite{Kressner2010}, we make use of tensorized subspaces. More specifically, given orthonormal bases $U_m,V_m$ for $m$-dimensional Krylov subspaces involving the matrices $A,A^*$ and the starting vectors $\vb,\vc$, we construct an approximation of the form
\begin{equation*}
 f(A+\vb\vc^*) - f(A) \approx U_m X_m(f) V_m^*,
\end{equation*}
with a well-chosen small matrix $X_m(f) \in \Cmm$. In turn, when $m\ll n$, the computational cost is dominated by $m$ matrix-vector products with $A$ and $A^*$. As we will demonstrate for a variety of examples, this dramatically reduces the computational effort compared to computing $f(A+\vb\vc^*)$ from scratch, even when only selected quantities -- such as the diagonal or trace of $f(A+\vb\vc^*)$ -- are required. 

The rest of this paper is organized as follows. In section~\ref{sec:algorithms}, we introduce Krylov subspace algorithms for approximating the update~\eqref{eq:matrixfunupdate} when $D$ is a matrix of rank one, both for the Hermitian and the non-Hermitian case, and discuss the extension to higher rank. In section~\ref{sec:exactness}, we prove that the approximation obtained from $m$ steps of our Krylov subspace algorithm is exact when $f$ is a polynomial of degree at most $m$. This result forms the basis of the convergence analysis presented in sections~\ref{sec:convergenceanalysis} and~\ref{sec:convergence_integral}. Precisely, the convergence of our Krylov approximations is linked to certain polynomial approximation problems in section~\ref{sec:convergenceanalysis}, while section~\ref{sec:convergence_integral} contains results that are based on exploiting an integral representation of $f(A)$, e.g., for general analytic functions and for Markov functions. Applications for our methods are described in Section~\ref{sec:experiments}, with special focus on up-/down-dating of communicability measures in network analysis, and the efficiency of our approach is illustrated by numerical experiments from this area. Concluding remarks are given in Section~\ref{sec:conclusion}.

We use $\|\cdot\|$ to denote the Euclidean norm of a vector or the induced spectral norm of a matrix.

\section{Krylov projection algorithms}\label{sec:algorithms}

In the following, we assume that $f(A)$ or at least the part relevant to the application (e.g., the diagonal of $f(A)$) have already been computed. For the moment, we continue to assume a rank-$1$ modification, that is, we consider the approximation of $f(A+\vb\vc^*)-f(A)$ for vectors $\vb,\vc$. In Remark~\ref{remark:higherrank} below, we will comment on the extension to higher ranks.

\subsection{Hermitian rank-1 updates} \label{sec:hermitian}

Because it is conceptually simpler, we first discuss the Hermitian case: $A = A^\ast$ and $\vb = \vc$.

The first step of our algorithm consists of constructing an orthonormal basis for a Krylov subspace of $A$ with starting vector $\vb$. Given $m\le n$, such a Krylov subspace takes the form 
\begin{equation*}
\calU_m := \spK_m(A,\vb) = \Span\{ \vb, A\vb, A^2\vb, \ldots, A^{m-1} \vb\}.
\end{equation*}
For simplicity, we suppose that $\calU_m$ has dimension $m$, which is generically satisfied. Applying $m$ steps of the Lanczos method~\cite{Lanczos1950} (possibly with reorthogonalization) results in the Lanczos relation
\begin{equation}
AU_m = U_mG_m + \beta_{m+1} \vu_{m+1}\ve_m^\ast, \label{eq:arnoldi_relationU}
\end{equation}
where the columns of $U_m \in \C^{n\times m}$ form an orthonormal basis of $\calU_m$, the matrix $G_m = U_m^\ast A U_m \in \C^{m\times m}$ is tridiagonal, and $\ve_m$ denotes the $m$th unit vector of length $m$. The first column of $U_m$ is a scalar multiple of $\vb$ and, without loss of generality, we may assume that $U_m^\ast \vb = \|\vb\| \ve_1$ with the first unit vector $\ve_1 \in \C^m$.

The second step of our algorithm then chooses an approximation in the tensorized subspace $\calU_m \otimes \calU_m$, that is,
\begin{equation}\label{eq:krylov_update_hermitian}
 f(A+\vb\vb^*)-f(A) \approx U_m X_m(f) U_m^*
\end{equation}
for some matrix $X_m(f) \in \C^{m \times m}$. In the spirit of Krylov subspace methods for matrix equations~\cite{Simoncini2016}, we choose $X_m(f)$ as the solution of the compressed problem:
\begin{equation}\label{eq:Xm_hermitian_difference}
 X_m(f) = f\big(U_m^* (A+\vb\vb^*) U_m\big) -  f\big( U_m^* A U_m \big) = f\big(G_m + \|\vb\|^2 \ve_1 \ve_1^\ast \big) - f( G_m ),
\end{equation}
where we assume that $f$ is defined on the spectra of $G_m$ and $G_m + \|\vb\|^2 \ve_1 \ve_1^\ast$.
Below, in Theorem~\ref{the:polynomial_exactness}, we will see that this choice of $X_m(f)$ leads to an approximation that is exact when $f$ is a polynomial of degree at most $m$.

The resulting method is summarized in Algorithm~\ref{alg:krylovhermitian}. The tridiagonal matrix $G_m$ from~\eqref{eq:arnoldi_relationU} is built from the orthogonalization coefficients $\alpha_j, \beta_j$ as
$$G_m = \begin{bmatrix}
\alpha_1 & \beta_2 & & & \\ 
\beta_2 & \alpha_2 & \beta_3 & & \\
 & \ddots & \ddots & \ddots & \\
 & & \beta_{m-1} & \alpha_{m-1} & \beta_m \\
&  & & \beta_m & \alpha_m \\
\end{bmatrix}.$$

A trivial modification of this algorithm can be used to approximate $f(A-\vb\vb^*)-f(A)$.

\begin{algorithm}
\caption{Krylov subspace approximation of $f(A+\vb\vb^*)-f(A)$ for Hermitian $A$}
\label{alg:krylovhermitian}
\begin{algorithmic}[1]
\State $\vu_0 = \vnull$.
\State $\vu_1 = (1/\|\vb\|) \vb$.
\State $\beta_1 = 0$.
\For{$j = 1,\dots,m$}
	\State $\vw_j =  A\vu_j - \beta_j\vu_{j-1}$.
	\State $\alpha_j = \vu_j^\ast\vw_j$.
	\State $\vw_j = \vw_j - \alpha_j\vu_j$.
	\State $\beta_{j+1} = \|\vw_j\|$.
	\State $\vu_{j+1} = (1/\beta_{j+1}) \vw_j$.
\EndFor
\State Compute matrix function $X_m(f) = f\big(G_m + \|\vb\|^2 \ve_1 \ve_1^\ast \big) - f( G_m )$
\State Return $U_m X_m(f) U_m^\ast$.
\end{algorithmic}
\end{algorithm}

Algorithm~\ref{alg:krylovhermitian} represents the most basic form of the \emph{Lanczos process}; in particular, it does not employ any reorthogonalization. It is well known that such short recurrences may suffer from severe loss of orthogonality in the presence of round-off errors, so that it can be advisable to use reorthogonalization strategies~\cite{Paige1971,Simon1984}. The most straight-forward to retain numerical orthogonality amongst the basis vectors is to store all basis vectors and perform \emph{full reorthogonalization} in each step of the method.

An alternative is to perform no reorthogonalization at all. In contrast to, e.g., the CG method for linear systems, there are no short recurrences for the iterates $U_mX_m(f)U_m^\ast$ available. In turn, this requires to use a \emph{two-pass} strategy for forming $U_mX_m(f)U_m^\ast$ if one wants to avoid storing the full basis $U_m$. In a first pass, the tridiagonal matrix $G_m$ is assembled (while storing only three basis vectors at a time and discarding the older ones) and $X_m(f)$ is formed. In a second pass, the basis vectors are computed anew (again only storing three at a time), and the parts of interest of $U_mX_m(f)U_m^\ast$ (e.g., its diagonal) are gradually computed. This approach doubles the number of matrix-vector products but reduces the storage requirement from $\calO(mn)$ to $\calO(m^2 + n)$. 

\begin{figure}
\begin{minipage}{.49\textwidth}
\includegraphics[width=\textwidth]{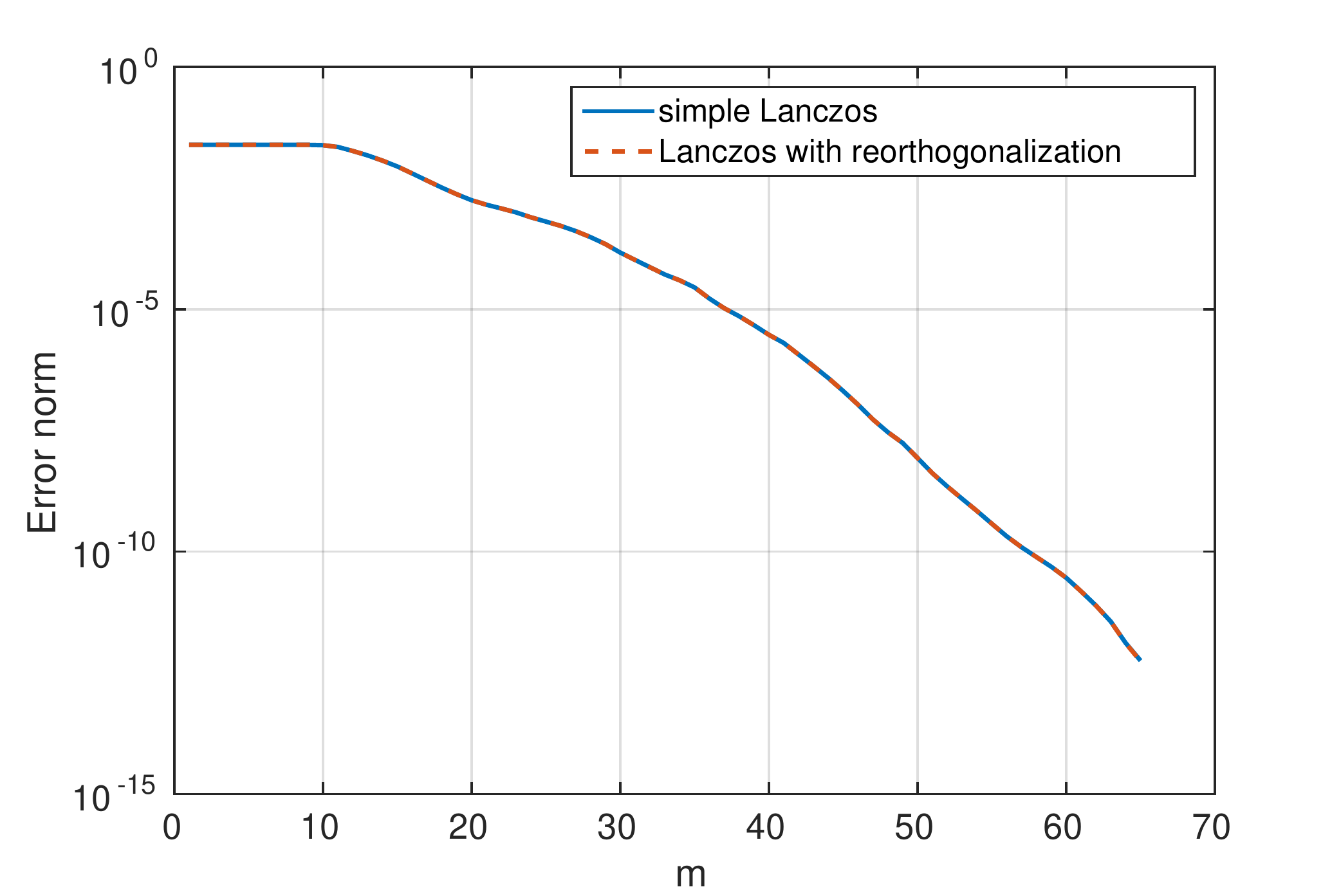}
\end{minipage}
\begin{minipage}{.49\textwidth}
\includegraphics[width=\textwidth]{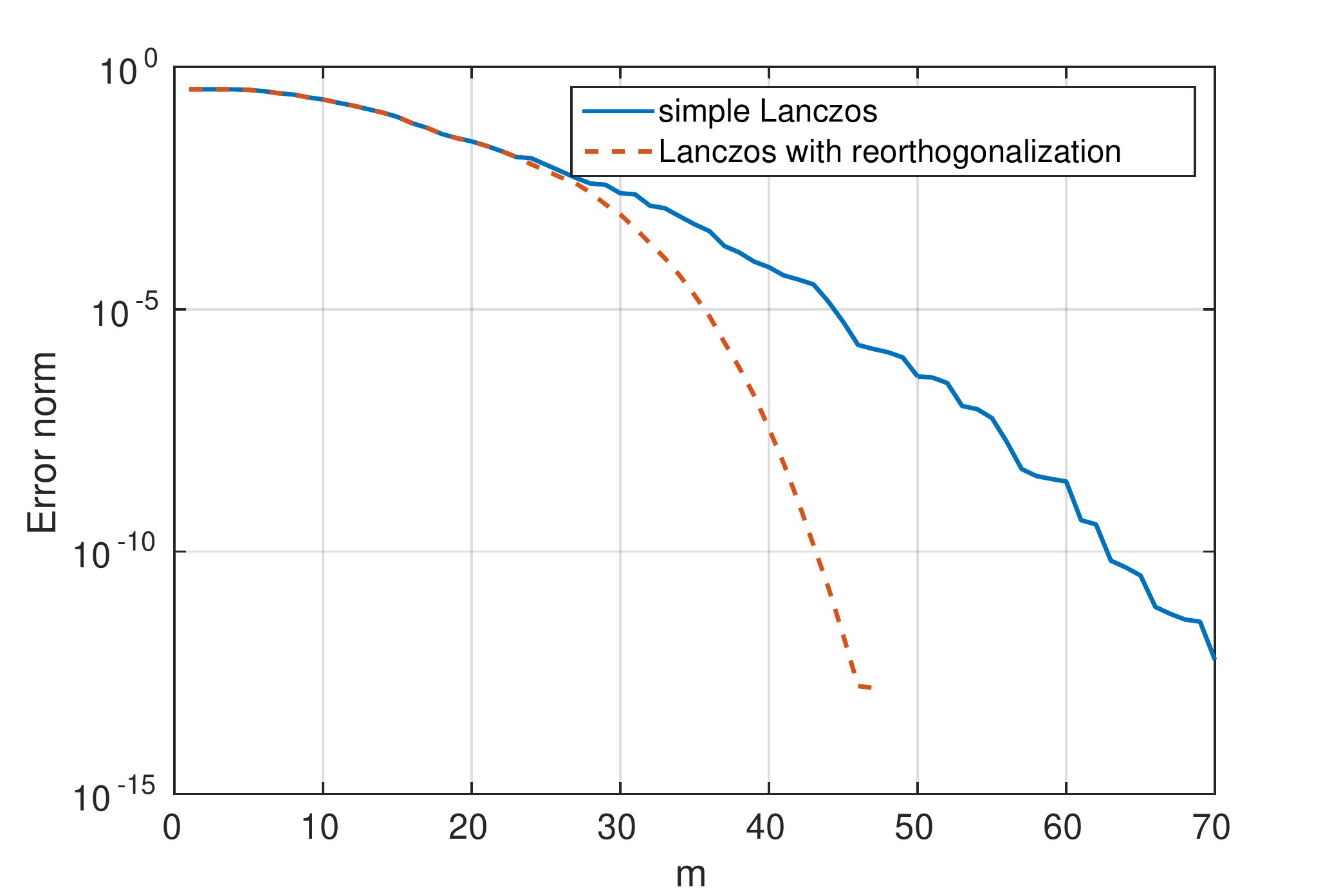}
\end{minipage}
\caption{Convergence curves of simple Lanczos and Lanczos with full reorthogonalization for diagonal matrices with equidistantly spaced (left) and logarithmically spaced (right) eigenvalues in $[10^{-3},10^3]$.\label{fig:reo}}
\end{figure}

\begin{example}
We illustrate the (possible) impact of reorthogonalization on convergence. Consider the diagonal matrices $A_1, A_2 \in \C^{100 \times 100}$, where the eigenvalues of $A_1$ are equidistantly spaced in the interval $[10^{-3},10^3]$ and the eigenvalues of $A_2$ are logarithmically spaced in $[10^{-3},10^3]$. We approximate $f(A_i+\vb\vb^\ast)-f(A_i), i = 1,2$ where $f(z) = \exp(-z)$ and $\vb$ is a random vector of unit norm using Lanczos without reorthogonalization and with full reorthogonalization, as explained above. The resulting convergence curves are shown in Figure~\ref{fig:reo}. For $A_1$, both methods are observed to behave identically, which is well explained by the fact that the eigenvalue distribution of $A_1$ does not favor the convergence of Ritz values, which has a close link to loss of orthogonality~\cite{MeurantStrakos2006,Paige1971}. For $A_2$, the eigenvalue distribution favors the convergence of Ritz values to larger eigenvalues and, in turn, convergence degrades after some time when using no reorthogonalization. Still, the method eventually converges to the same accuracy, it just needs more iterations. This phenomenon is also known from the conjugate gradient method in finite precision arithmetic~\cite{MeurantStrakos2006} and when approximating $f(A)\vb$~\cite{DruskinKnizhnerman1991}. To avoid that orthogonality issues distort our findings, we use the Lanczos method with full reorthogonalization for all numerical experiments in the rest of this paper. \hfill $\diamond$
\end{example}

The computational cost of $m$ steps of Algorithm~\ref{alg:krylovhermitian} is as follows:
\begin{itemize}
\item $m$ matrix-vector products with $A$ (2$m$, when the two-pass approach is used), requiring $\calO(m\cdot\nnz(A))$ operations for a sparse matrix $A$ with $\nnz(A)$ nonzeros;
\item $\calO(m^2n)$ operations for the orthogonalization procedure when using full reorthogonalization (and only $\mathcal{O}(mn)$ when the two-pass approach is used, because in each step orthogonalization against only two previous basis vectors is performed)
\item the computation of $X_m(f)$, which requires the evaluation of two functions of $m \times m$ matrices, which, depending on the function $f$, typically needs at most $\calO(m^3)$ operations. 
\end{itemize}
One should avoid forming $U_mX_m(f)U_m^\ast$ explicitly and we expect that this is actually not needed in most applications involving a large sparse matrix $A$. For example, the computation of communicability measures  discussed in Section~\ref{sec:network} only requires the diagonal entries of $U_mX_m(f)U_m^\ast$, which can be computed directly from $U_m$, $X_m(f)$ with $\calO(m^2 n)$ operations.

\subsection{Non-Hermitian rank-1 updates}

In the general non-Hermitian case, we construct orthonormal bases for the two (polynomial) Krylov subspaces $\calU_m := \spK_m(A,\vb)$
and $\calV_m := \spK_m(A^\ast,\vc)$. Applying the Arnoldi method with reorthogonalization 
results in the Arnoldi relations~\eqref{eq:arnoldi_relationU} and, additionally,
\begin{equation}
A^\ast V_m = V_m H_m + h_{m+1,m} \vv_{m+1}\ve_m^\ast, \label{eq:arnoldi_relationV}
\end{equation}
where the columns of $V_m \in \C^{n\times m}$ form an orthonormal basis of $\calV_m$. Note that both $G_m = U_m^* A U_m$ and $H_m = V_m^\ast A^\ast V_m$ are now $m\times m$ upper Hessenberg matrices. The approximation is chosen in the tensorized subspace $\calU_m \otimes \calV_m$, that is,
\begin{equation}\label{eq:krylov_update_nonhermitian}
 f(A+\vb\vc^*)-f(A) \approx U_m X_m(f) V_m^*
\end{equation}
for some matrix $X_m(f) \in \C^{m \times m}$. Concerning the choice of $X_m(f)$, it turns out that the non-Hermitian analogue of~\eqref{eq:Xm_hermitian_difference} does not have favorable theoretical properties. For example, the polynomial exactness property mentioned above for~\eqref{eq:Xm_hermitian_difference} and proven in section~\ref{sec:exactness} does not hold for such a choice. We will use a different choice, motivated by the following simple result.
\begin{lemma}\label{lem:block}
Let $A \in \Cnn$, and $\vb,\vc \in \Cn$. Define the block matrix
\begin{equation}\label{eq:calA}
\mathcal A := \left[\begin{array}{cc} A & \vb\vc^\ast \\ 0 & A+\vb\vc^\ast\end{array}\right].
\end{equation}
Then
\begin{equation*}
f(\mathcal A) = \left[\begin{array}{cc} f(A) & f(A+\vb\vc^\ast)-f(A) \\ 0 & f(A + \vb\vc^\ast)\end{array}\right].
\end{equation*}
\end{lemma}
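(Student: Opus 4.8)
The plan is to exploit the block upper-triangular structure of $\mathcal A$ together with the definition of a matrix function via a contour integral (or, equivalently, via polynomials). First I would observe that the $(1,1)$ and $(2,2)$ diagonal blocks of $\mathcal A$ are exactly $A$ and $A+\vb\vc^\ast$, so that $\Lambda(\mathcal A) = \Lambda(A) \cup \Lambda(A+\vb\vc^\ast) \subseteq \Omega$, which means $f(\mathcal A)$ is well defined under the running assumption that $f$ is analytic on a domain containing both spectra. The general principle I would invoke is that $f$ applied to a block upper-triangular matrix is again block upper-triangular with $f$ of the diagonal blocks on the diagonal; this follows immediately for polynomials, since products and sums of block upper-triangular matrices preserve the structure and act blockwise on the diagonal, and then extends to analytic $f$ either by density/interpolation on the spectrum or directly from the Cauchy integral representation~\eqref{eq:contourintro} applied to $\mathcal A$.

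The only nontrivial point is the identification of the off-diagonal $(1,2)$ block of $f(\mathcal A)$. The cleanest route is the resolvent computation: for $z \notin \Lambda(\mathcal A)$ one has
\begin{equation*}
(zI - \mathcal A)^{-1} = \left[\begin{array}{cc} (zI-A)^{-1} & (zI-A)^{-1}\,\vb\vc^\ast\,(zI - A - \vb\vc^\ast)^{-1} \\ 0 & (zI - A - \vb\vc^\ast)^{-1}\end{array}\right],
\end{equation*}
which is verified by multiplying out $(zI - \mathcal A)(zI-\mathcal A)^{-1} = I$ blockwise. Substituting this into~\eqref{eq:contourintro} for $\mathcal A$ over a contour $\Gamma$ enclosing $\Lambda(A) \cup \Lambda(A+\vb\vc^\ast)$ gives the diagonal blocks $f(A)$ and $f(A+\vb\vc^\ast)$ directly, and yields the $(1,2)$ block as
\begin{equation*}
\frac{1}{2\pi\mathrm i}\int_\Gamma f(z)\,(zI-A)^{-1}\,\vb\vc^\ast\,(zI-A-\vb\vc^\ast)^{-1}\,\mathrm dz .
\end{equation*}
It then remains to show this integral equals $f(A+\vb\vc^\ast) - f(A)$. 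For this I would use the resolvent identity $(zI-A)^{-1} - (zI-A-\vb\vc^\ast)^{-1} = -(zI-A)^{-1}\,\vb\vc^\ast\,(zI-A-\vb\vc^\ast)^{-1}$ (equivalently, $(zI-A)^{-1}\vb\vc^\ast(zI-A-\vb\vc^\ast)^{-1} = (zI-A-\vb\vc^\ast)^{-1} - (zI-A)^{-1}$), so the integrand becomes $f(z)\big[(zI-A-\vb\vc^\ast)^{-1} - (zI-A)^{-1}\big]$, whose contour integral is precisely $f(A+\vb\vc^\ast) - f(A)$ by~\eqref{eq:contourintro} applied separately to $A+\vb\vc^\ast$ and to $A$ (both valid since $\Gamma$ encloses both spectra).

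I expect the main obstacle to be purely a matter of bookkeeping rather than depth: one must be careful that a single contour $\Gamma$ can be chosen enclosing both $\Lambda(A)$ and $\Lambda(A+\vb\vc^\ast)$ while staying inside $\Omega$ (guaranteed by analyticity of $f$ on $\Omega \supseteq \Lambda(A)\cup\Lambda(A+\vb\vc^\ast)$ and the openness of $\Omega$), and that the resolvent formula above is valid for all $z$ on $\Gamma$. An alternative, entirely algebraic proof avoids contours altogether: establish the block formula for $f(z) = z^j$ by induction on $j$ — the induction step only needs $\mathcal A^{j+1} = \mathcal A\cdot\mathcal A^j$ and a short computation showing the $(1,2)$ block of $\mathcal A^{j+1}$ equals $(A+\vb\vc^\ast)^{j+1} - A^{j+1}$ — and then invoke the fact that $f(\mathcal A)$ depends only on the values of $f$ (and its derivatives) on $\Lambda(\mathcal A)$, so it agrees with the Hermite interpolating polynomial, for which the block identity already holds. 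Either way the argument is short; I would present the resolvent/contour version as the primary proof since the surrounding paper already uses~\eqref{eq:contourintro}.
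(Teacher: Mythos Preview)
Your proposal is correct and follows essentially the same route as the paper: compute the block upper-triangular resolvent $(zI-\mathcal A)^{-1}$, plug it into the Cauchy integral~\eqref{eq:contourintro}, and identify the $(1,2)$ block with $f(A+\vb\vc^\ast)-f(A)$ via the second resolvent identity. The alternative polynomial/induction argument you sketch is a fine backup but is not used in the paper's proof.
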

\begin{proof}
Letting $\Gamma$ denote a contour that encloses both $\spec(A)$ and $\spec(A+\vb\vc^\ast)$, the Cauchy integral formula~\eqref{eq:contourintro} applied to $f(\mathcal A)$ yields
\begin{eqnarray}
 f(\mathcal A) &=& \frac{1}{2\pi i} \int_\Gamma f(z) (zI-\mathcal A)^{-1} \d z \nonumber \\
 &=& \frac{1}{2\pi i} \int_\Gamma f(z) \begin{bmatrix}(zI-A)^{-1} & -(zI-A)^{-1}\vb\vc^\ast(zI- A - \vb\vc^\ast)^{-1} \\ 0 & (zI-A-\vb\vc^\ast)^{-1} \end{bmatrix} \d z \nonumber \\
 &=& \left[\begin{array}{cc} f(A) & -\frac{1}{2\pi i} \int_\Gamma f(z) (zI- A)^{-1} \vb\vc^\ast (zI-A - \vb\vc^\ast)^{-1} \d z\\ 0 & f(A + \vb\vc^\ast)\end{array}\right]. \label{eq:auxintegral}
\end{eqnarray}
On the other hand, combining the Cauchy integral formula for $f(A+\vb\vc^\ast)$ and $f(A)$ with the second resolvent identity yields
\begin{eqnarray}
f(A+\vb\vc^\ast)-f(A) &=& \frac{1}{2\pi i}\int_\Gamma f(z) \big[ (zI- A - \vb\vc^\ast)^{-1}-(zI-A)^{-1} \big]\d z \nonumber \\
&=& - \frac{1}{2\pi i}\int_\Gamma f(z) (zI- A - \vb\vc^\ast)^{-1} \vb\vc^\ast (zI-A)^{-1} \d z \nonumber \\
&=& - \frac{1}{2\pi i}\int_\Gamma f(z) (zI-A)^{-1} \vb\vc^\ast (zI- A - \vb\vc^\ast)^{-1} \d z, \label{eq:integralformulaupdate}
\end{eqnarray}
which matches the (1,2) block of~\eqref{eq:auxintegral} and thus completes the proof.
\end{proof}

The result of Lemma~\ref{lem:block} shows that the desired update is contained in the (1,2) block of $f(\mathcal A)$. This motivates us to choose $X_m(f)$ as the (1,2) block of $f$ applied to the compression of $\mathcal A$ onto $\calU_m \oplus \calV_m$:
\begin{eqnarray}
 &&f\left( \begin{bmatrix}
          U_m & 0 \\
          0 & V_m
         \end{bmatrix}^\ast \left[\begin{array}{cc} A & \vb\vc^\ast \\ 0 & A+\vb\vc^\ast\end{array}\right]
         \begin{bmatrix}
          U_m & 0 \\
          0 & V_m
         \end{bmatrix}
\right)\nonumber \\
&=&f\left( \begin{bmatrix}
G_m & \|\vb\| \|\vc\| \ve_1 \ve_1^\ast \\ 0 & H_m^* +\|\vc\| V_m^\ast \vb \ve_1^\ast
           \end{bmatrix}  
\right) = \begin{bmatrix}
f(G_m) & X_m (f)\\ 0 & f(H_m^* +\|\vc\| V_m^\ast \vb \ve_1^\ast)
           \end{bmatrix},\label{eq:Xm_nonhermitian}
\end{eqnarray}
where we again assume that $f$ is defined on the spectra of $G_m$ and $H_m^* +\|\vc\| V_m^\ast \vb \ve_1^\ast$. Using Lemma~\ref{lem:block}, it is straightforward to see that this choice of $X_m(f)$ coincides in the Hermitian case with the one from section~\ref{sec:hermitian}.

Algorithm~\ref{alg:krylovnonhermitian} summarizes the proposed procedure, where we omit the algorithmic details for the Arnoldi method for the sake of brevity; see, e.g.,~\cite{GolubVanLoan2013}.
\begin{algorithm}
\caption{Krylov subspace approximation of $f(A+\vb\vc^\ast)-f(A)$}
\label{alg:krylovnonhermitian}
\begin{algorithmic}[1]
\State Perform $m$ steps of the Arnoldi method to compute an orthonormal basis $U_m$ of $\spK_m(A,\vb)$ and $G_m = U_m^\ast A U_m$.
\State Perform $m$ steps of the Arnoldi method to compute an orthonormal basis $V_m$ of $\spK_m(A^*,\vc)$ and $H_m = V_m^\ast A^\ast V_m$.
\State Compute matrix function $F_m = f\left( \begin{bmatrix}
G_m & \|\vb\| \|\vc\| \ve_1 \ve_1^\ast \\ 0 & H_m^* +\|\vc\| V_m^\ast \vb \ve_1^\ast
\end{bmatrix}\right)$.
\State Set $X_m(f) = F_m(1:m,m+1:2m)$.
\State Return $U_m X_m(f) V_m^\ast$.
\end{algorithmic}
\end{algorithm}

The computational effort of Algorithm~\ref{alg:krylovnonhermitian} compares to that of Algorithm~\ref{alg:krylovhermitian} (with full reorthogonalization) as follows. In contrast to the Hermitian case, we now need to build \emph{two} Krylov spaces instead of one, meaning that the number of matrix vector products necessary for $m$ steps of the method increases from $m$ to $2m$. The orthogonalization cost is the same as in the Hermitian case (with full reorthogonalization). 
As the number of operations needed for approximating a (dense) matrix function often grows cubically in the matrix size it is typically about four times as expensive to compute one matrix function of size $2m \times 2m$ than computing two matrix functions of size $m \times m$. The fact that the matrix for which we need to evaluate $f$ in Algorithm~\ref{alg:krylovnonhermitian} is non-Hermitian and not tridiagonal may increase the cost further. However, as long as $m \ll n$, the main cost of Algorithm~\ref{alg:krylovnonhermitian} consists of performing matrix-vector products, so that we can expect it to take roughly twice the computation time of Algorithm~\ref{alg:krylovhermitian} for a problem of the same size and structure.

\begin{remark} \label{remark:higherrank}
There are two different ways of extending our approach from a rank-one modification to a general rank-$k$ modification $f(A+D)$ with $D = BC^\ast$ and $B,C \in \C^{n \times k}$:

1. By letting $\vb_1,\dots,\vb_k$ and $\vc_1,\dots,\vc_k$ denote the columns of $B$ and $C$, respectively, we can write $D$ as a sum of $k$ rank-one matrices, $D = \sum_{i=1}^k \vb_i\vc_i^\ast$, e.g., by computing a singular value decomposition of $D$ and then apply $k$ times Algorithm~\ref{alg:krylovnonhermitian} in order to subsequently incorporate each of the $k$ rank-one modifications. Note that the $i$th step of this procedure requires working with the Krylov subspaces $\spK_m(A+\sum_{j=1}^{i-1} \vb_j\vc_j^\ast, \vb_i)$ and $\spK_m(A^\ast+\sum_{j=1}^{i-1} \vc_j\vb_j^\ast, \vc_i)$ which in general do \emph{not} coincide with $\spK_m(A,\vb_i)$ and $\spK_m(A^\ast,\vc_i)$.

Some care is required in order to make use of Algorithm~\ref{alg:krylovhermitian} 
for a Hermitian rank-$k$ matrix $D$. After a preprocessing step (see, e.g.,~\cite[Section 2.3]{BennerEzzattiKRessnerQuintanaOrtiRemon2011}) one can write $D = \sum_{i=1}^{\tilde k} \vb_i\vb_i^\ast - \sum_{i=\tilde k + 1}^{k} \vb_i\vb_i^\ast$. First, Algorithm~\ref{alg:krylovhermitian} is applied to incorporate the first $\tilde k$ terms followed by a slight modification of this algorithm to incorporate the last $k-\tilde k$ terms

2. \emph{Block Krylov subspaces}~\cite{Gutknecht2006,OLeary1980,SimonciniGallopoulos1995} offer a conceptually different way of dealing with a rank-$k$ modification. Instead of the Arnoldi method, a block Arnoldi method is used for computing orthonormal bases of the block Krylov subspaces $\spK_m(A,B) = \text{range}[B,AB,\ldots, A^{m-1}B]$ and $\spK_m(A^\ast,C)$. The approximation of $f(A+D)-f(A)$ is computed by projecting onto the tensorized block Krylov subspace $\spK_m(A,B) \otimes \spK_m(A^\ast,C)$, leading to a straightforward extension of Algorithm~\ref{alg:krylovnonhermitian}. Block Krylov subspace methods are more complicated to implement, see, e.g.,~\cite{Gutknecht2006} for some of the issues one has to take into account, but they offer (at least) one major advantage. 
Even though the product of $A$ with an $n \times k$ matrix requires the same number of operations as $k$ individual matrix vector products, it often performs much faster on a computer, benefitting from a more ``cache-friendly'' memory access pattern, see, e.g.,~\cite{BakerDennisJessup2006}. Again some care is required in the symmetric case, to derive a block variant of Algorithm~\ref{alg:krylovhermitian}.
\end{remark}

\subsection{Stopping criterion}\label{subsec:stopping}
A simple stopping criterion for Algorithm~\ref{alg:krylovhermitian} or Algorithm~\ref{alg:krylovnonhermitian} is based on the error estimate
\begin{equation}\label{eq:error_estimate_difference}
\|f(A+\vb\vc^\ast)-U_mX_m(f)V_m^\ast\| \approx \|U_{m+d}X_{m+d}(f)V_{m+d}^\ast-U_mX_m(f)V_m^\ast\|
\end{equation}
for some small integer $d\ge 1$. This error estimate is similar in spirit to error estimates for linear systems proposed in~\cite{GolubMeurant1997,Meurant1997}. Evaluating the right-hand side of~\eqref{eq:error_estimate_difference} only requires forming the coefficient matrices $X_m(f), X_{m+d}(f)$ defined in~\eqref{eq:krylov_update_hermitian} or~\eqref{eq:krylov_update_nonhermitian}, because
\begin{eqnarray*}
&& \|U_{m+d}X_{m+d}(f)V_{m+d}^\ast-U_mX_m(f)V_m^\ast\|\nonumber\\
 &=& \left\|U_{m+d}X_{m+d}(f)V_{m+d}^\ast - U_{m+d}\left[\begin{array}{cc} X_m(f) & 0 \\ 0 & 0\end{array}\right]V_{m+d}^\ast\right\| \nonumber\\
&=& \left\|X_{m+d}(f) - \left[\begin{array}{cc} X_m(f) & 0 \\ 0 & 0\end{array}\right]\right\|.
\end{eqnarray*}
We illustrate the behavior of the error estimator~\eqref{eq:error_estimate_difference} by means of a simple numerical experiment.

\begin{figure}
\begin{center}
\includegraphics[width=.7\textwidth]{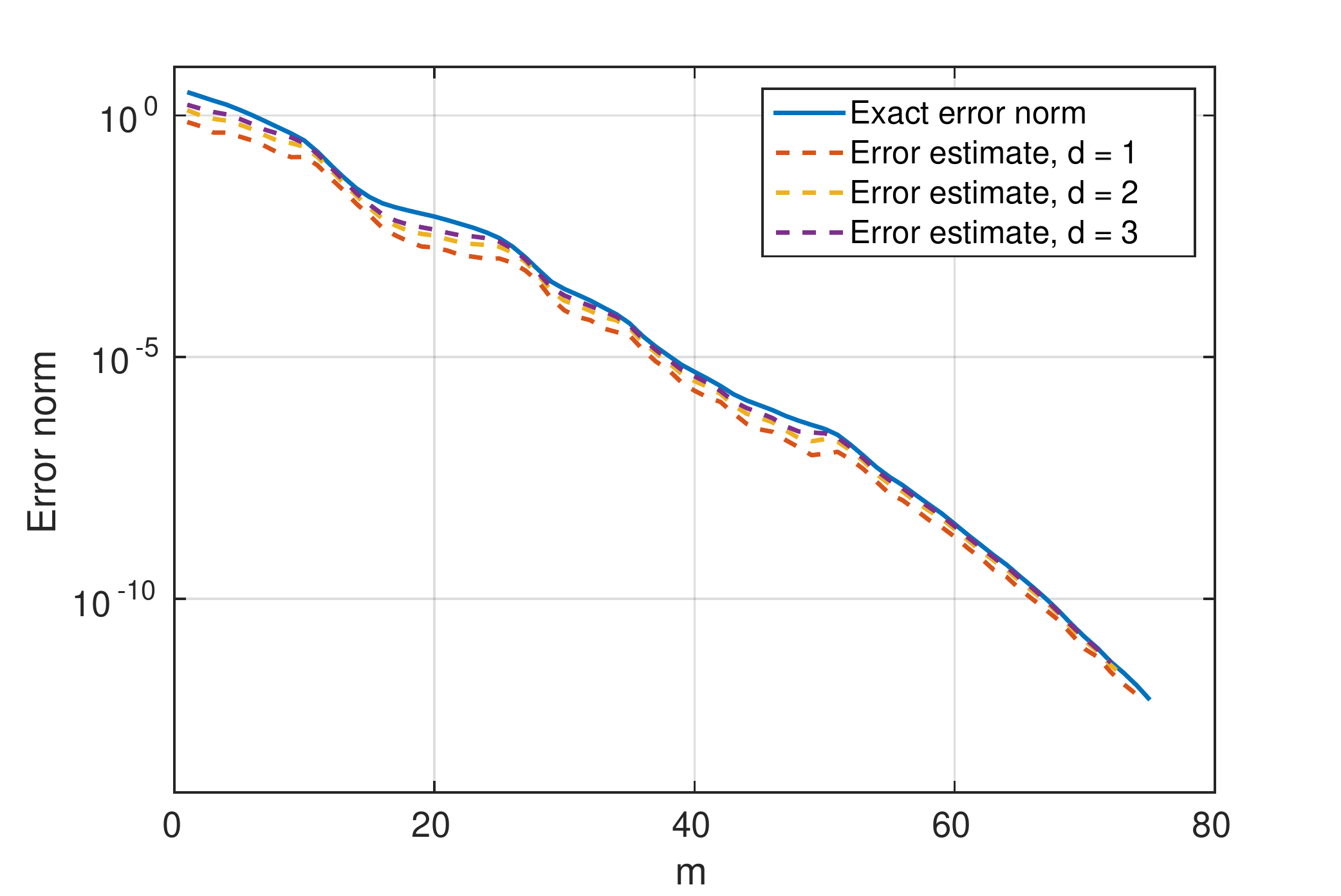}
\end{center}
\caption{Comparison of exact error and difference-based error estimates for the update of the inverse square root described in Example~\ref{ex:error_estimate}.
\label{fig:error_estimates}}
\end{figure}

\begin{example}\label{ex:error_estimate}
We choose $A \in \R^{400 \times 400}$ as the standard finite difference discretization of the two-dimensional Laplace operator on the unit grid, and $\vb,\vc$ as vectors with random, normal distributed entries. We use our proposed Krylov subspace algorithm to approximate $(A+\vb\vc^\ast)^{-1/2} - A^{-1/2}$ and show the convergence history as well as the proposed error estimator for $d = 1,2,3$ in Figure~\ref{fig:error_estimates}. We observe that the difference-based error estimators follow the norm of the exact error very closely and thus give very accurate results already for such small values of $d$. \hfill $\diamond$
\end{example}

We remark that the error estimator~\eqref{eq:error_estimate_difference} is clearly heuristic and can be overly optimistic, especially in situations where the iteration (almost) stagnates, although it works very well in most practical situations. Of course, when already $m+d$ iterations of the method have been performed, one will typically use the approximation $U_{m+d}X_{m+d}V_{m+d}^\ast$ from the $(m+d)$th step instead of the approximation from the $m$th step (for which the error is estimated), as it can be expected to be a more accurate approximation. In fact, in Example~\ref{ex:error_estimate}, for $d=3$ the error of the iterate $X_{m+d}$ lies \emph{below} the error estimate~\eqref{eq:error_estimate_difference} for all but four iterations.

\section{Exactness properties of Krylov subspace approximations for the update}\label{sec:exactness}

In this section, we show that the approximation~\eqref{eq:krylov_update_nonhermitian} is exact when $f$ is a polynomial of degree at most $m$. This serves two purposes: On the one hand, this justifies the particular choice of approximation we made. On the other hand, this provides the fundamental basis for performing our convergence analyses in section~\ref{sec:convergenceanalysis}.

As a tool for proving polynomial exactness, we utilize the following matrix identity, which does not seem to be widely known.

\begin{proposition}\label{pro:matrix_identity}
Let $M, N \in \Cnn$. Then, for any $j \in \N$,
\begin{equation*}
M^j - N^j = \sum_{k = 0}^{j-1} N^{j-1-k}(M-N)M^k.
\end{equation*}
\end{proposition}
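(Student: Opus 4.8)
The plan is to prove the identity by a simple telescoping argument. The right-hand side $\sum_{k=0}^{j-1} N^{j-1-k}(M-N)M^k$ is a sum of $j$ terms; I would expand each term as $N^{j-1-k}MM^k - N^{j-1-k}NM^k = N^{j-1-k}M^{k+1} - N^{j-k}M^k$. Writing $T_k := N^{j-k}M^k$ for $k=0,1,\dots,j$, the $k$th summand becomes $T_k - T_{k+1}$, so the whole sum telescopes to $T_0 - T_j = N^j M^0 - N^0 M^j = M^j - N^j$ (after reversing the sign as appropriate), which is exactly the left-hand side. I should be a little careful about which index I call $k$ and the direction of the telescope, but this is bookkeeping rather than substance.

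An alternative, perhaps cleaner, route is induction on $j$. The base case $j=0$ (or $j=1$) is immediate. For the inductive step, write $M^{j} - N^{j} = M^{j-1}(M-N) + (M^{j-1}-N^{j-1})N$, or symmetrically $M^j - N^j = (M-N)M^{j-1} + N(M^{j-1}-N^{j-1})$; applying the induction hypothesis to $M^{j-1}-N^{j-1}$ and reindexing the resulting sum yields the claimed formula. I would pick whichever of the two identity-splittings places the factor $(M-N)$ in the correct position ($N^{j-1-k}(M-N)M^k$ has the powers of $N$ on the left and powers of $M$ on the right), namely $M^j - N^j = (M^{j-1}-N^{j-1})M + N^{j-1}(M - N)$, so that the new top term $k=j-1$ is $N^{j-1}(M-N)M^0$ and the inherited sum contributes the terms with $k=0,\dots,j-2$ after multiplying on the right by $M$.

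I expect there to be no real obstacle here; the only things to watch are (i) that $M$ and $N$ do \emph{not} commute, so the ordering of factors must be preserved throughout (in particular one cannot simplify $N^{j-1-k} M^k$), and (ii) getting the summation indices and the endpoints of the telescope exactly right. I would present the telescoping proof as the primary argument since it is the shortest: expand the summand, recognize the telescoping pattern, and collapse. Two or three lines of display math suffice.
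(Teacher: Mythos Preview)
Your proposal is correct. Your secondary route---induction on $j$ via the splitting $M^j-N^j=(M^{j-1}-N^{j-1})M+N^{j-1}(M-N)$---is essentially the paper's proof: the paper multiplies the induction hypothesis by $N$ on the left and then adds the missing top term $(M-N)M^{j-1}$, which is the same decomposition organized slightly differently.

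Your primary route, the telescoping argument, is a genuinely different and shorter proof. With $T_k:=N^{j-k}M^k$ the $k$th summand is $N^{j-1-k}M^{k+1}-N^{j-k}M^k=T_{k+1}-T_k$ (note the sign is the reverse of what you first wrote, as you anticipated), so the sum collapses to $T_j-T_0=M^j-N^j$ in one line. This avoids the inductive bookkeeping entirely and makes the noncommutative ordering transparent; the paper's induction, by contrast, requires tracking which side the extra factor of $N$ or $M$ is applied on and reindexing. Either argument is fine, but the telescoping one is cleaner.
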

\begin{proof}
The proof is by induction on $j$. For $j = 0$, the assertion trivially holds. Suppose now that the assertion holds for $j-1$:
\begin{equation}\label{eq:matrix_identity_proof}
M^{j-1} - N^{j-1} = \sum_{k = 0}^{j-2} N^{j-2-k}(M-N)M^k.
\end{equation}
Multiplying both sides of~\eqref{eq:matrix_identity_proof} by $N$ from the left and adding $M^j$, we find
\begin{equation*}
M^j + NM^{j-1} - N^j = M^j + \sum_{k = 0}^{j-2} N^{j-1-k}(M-N)M^k.
\end{equation*}
Finally, subtracting $M^{j-1}N$ on both sides and noting that $M^j - NM^{j-1} = N^0(M-N)M^{j-1}$ completes the proof.
\end{proof}


\begin{theorem}\label{the:polynomial_exactness}
Let $A\in \Cnn$, $\vb,\vc \in \Cn$. Then the Krylov subspace approximation returned by Algorithm~\ref{alg:krylovnonhermitian} is exact for all $p \in \Pi_{m}$, where $\Pi_{m}$ denotes the space of all polynomials of degree at most $m$, i.e.,
$$p(A+\vb\vc^\ast)-p(A) = U_m X_m(p) V_m^\ast.$$
\end{theorem}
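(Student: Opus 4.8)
The plan is to reduce the claim to the monomial case $p(z) = z^j$ for $0 \le j \le m$ by linearity (the approximation map $p \mapsto U_m X_m(p) V_m^*$ is linear in $p$ because $X_m(p)$ is extracted from the $(1,2)$-block of $p(\widehat{\mathcal A}_m)$ where $\widehat{\mathcal A}_m$ is the compressed block matrix, and $p \mapsto p(\widehat{\mathcal A}_m)$ is linear), and then prove the monomial statement by combining Proposition~\ref{pro:matrix_identity} with the standard Krylov exactness fact that $q(A)\vb \in \calU_m$ and $q(A^*)\vc \in \calV_m$ for any polynomial $q$ of degree at most $m-1$, together with the compatibility relations $A^k \vb = U_m G_m^k \ve_1 \|\vb\|$ for $k \le m-1$ and $(A^*)^k \vc = V_m H_m^k \ve_1 \|\vc\|$ for $k \le m-1$.

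Concretely, for $p(z) = z^j$, I would first apply Proposition~\ref{pro:matrix_identity} with $M = A + \vb\vc^*$ and $N = A$ to write
\begin{equation*}
(A+\vb\vc^*)^j - A^j = \sum_{k=0}^{j-1} A^{j-1-k} \vb \vc^* (A+\vb\vc^*)^k.
\end{equation*}
The key observation is that $\vc^* (A+\vb\vc^*)^k$ can itself be expanded, again via Proposition~\ref{pro:matrix_identity} or just by direct expansion, into a combination of terms of the form $\vc^* A^{i_1} \vb \vc^* A^{i_2} \vb \cdots \vc^* A^{i_\ell}$ where the exponents satisfy $i_1 + \cdots + i_\ell \le k \le j-1 \le m-1$; each individual factor $A^{i}\vb$ with $i \le m-1$ lies in $\calU_m$ and equals $U_m G_m^{i} \ve_1 \|\vb\|$ (and symmetrically for $A^*$ acting on $\vc$). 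Substituting these identities shows that $(A+\vb\vc^*)^j - A^j = U_m Y_j V_m^*$ where $Y_j$ is obtained from exactly the same symbolic expansion with $A$ replaced by $G_m$, $A^*$ by $H_m$, $\vb$ replaced by $\|\vb\|\ve_1$, $\vc^*$ by $\|\vc\|\ve_1^*$, and the inner products $\vc^* A^i \vb$ replaced by $\|\vb\|\|\vc\| \ve_1^* G_m^i \ve_1$ — but these are precisely the entries appearing in the compressed block matrix in~\eqref{eq:Xm_nonhermitian}, so $Y_j$ equals the $(1,2)$-block of the $j$-th monomial of that compressed block matrix, i.e.\ $Y_j = X_m(z^j)$.

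I expect the main obstacle to be the bookkeeping in the second expansion step: one must carefully verify that when $\vc^*(A+\vb\vc^*)^k$ is expanded, every occurrence of $A$ (respectively $A^*$) appears raised to a power at most $m-1$ so that the Krylov identities apply, and that the block structure of $\widehat{\mathcal A}_m$ in~\eqref{eq:Xm_nonhermitian} — with the $(1,1)$ block $G_m$, the $(2,2)$ block involving $H_m^*$, and the coupling block $\|\vb\|\|\vc\|\ve_1\ve_1^*$ — reproduces exactly this symbolic expansion in its $(1,2)$ block. A clean way to organize this is to prove directly by induction on $j$ that $(A+\vb\vc^*)^j = U_m G_m^j U_m^* + (\text{terms})$ alongside the companion identities $(A+\vb\vc^*)^j \vb \in \calU_m$ and $\vc^*(A+\vb\vc^*)^j$ a row vector in the row span of $V_m^*$ for $j \le m-1$ (so that one extra factor of $\vb\vc^*$ keeps us inside the tensorized subspace up to degree $m$); alternatively, and perhaps most transparently, one can observe that the block matrix $\mathcal A$ from~\eqref{eq:calA} satisfies $\mathcal A^j = \begin{bmatrix} A^j & (A+\vb\vc^*)^j - A^j \\ 0 & (A+\vb\vc^*)^j \end{bmatrix}$ by Lemma~\ref{lem:block} applied to $f(z)=z^j$, and that the block-diagonal matrix $W = \operatorname{diag}(U_m, V_m)$ gives a Krylov-type compression of $\mathcal A$: the point is that $\mathcal A$ maps $\calU_m \oplus 0$ into $\calU_m \oplus \calV_m$ and $0 \oplus \calV_m$ into $0 \oplus \calV_m$ in a way compatible with the first $m$ powers, which yields $W^* \mathcal A^j W = (W^* \mathcal A W)^j$ for $j \le m$ and hence the result by reading off the $(1,2)$ block. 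I would likely present the proof via this last route, as it isolates exactly the statement needed from Lemma~\ref{lem:block} and standard polynomial Krylov exactness, with Proposition~\ref{pro:matrix_identity} entering to handle the off-diagonal coupling cleanly.
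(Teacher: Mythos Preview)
Your overall structure---reduce to monomials by linearity, apply Proposition~\ref{pro:matrix_identity}, and invoke Krylov exactness---matches the paper's. But the route you say you would ``likely present'', namely the identity $W^*\mathcal A^j W = (W^*\mathcal A W)^j$ for $j\le m$, is false: already the $(1,1)$ block reads $U_m^*A^jU_m$ on the left and $G_m^j=(U_m^*AU_m)^j$ on the right, and these differ for $j\ge 2$ unless $\calU_m$ is $A$-invariant. The subspace claims you make about how $\mathcal A$ acts are also off (for instance $\mathcal A$ sends $[0;\vv]$ to $[(\vc^*\vv)\vb;\,(A+\vb\vc^*)\vv]$, which is not in $0\oplus\calV_m$). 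Only the $(1,2)$ blocks need to coincide, and that requires a separate argument, not a full matrix identity.

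Your first, expansion-based approach is closer to the paper but contains a bookkeeping slip: the scalars $\vc^*A^i\vb$ do \emph{not} become $\|\vb\|\|\vc\|\ve_1^*G_m^i\ve_1$ (this involves the wrong basis $U_m$ on the $\vc$-side). The paper avoids the second expansion entirely via the observation you are missing: since $(A+\vb\vc^*)^*=A^*+\vc\vb^*$ differs from $A^*$ by a rank-one map with range $\Span\{\vc\}$, one has $\spK_m(A^*,\vc)=\spK_m((A+\vb\vc^*)^*,\vc)$. Thus $V_m$ is simultaneously an orthonormal Krylov basis for $(A+\vb\vc^*)^*$ with starting vector $\vc$, and the $(2,2)$ block $H_m^*+\|\vc\|V_m^*\vb\ve_1^*$ is precisely the compression $V_m^*(A+\vb\vc^*)V_m$. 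Standard Krylov exactness then gives $\|\vc\|\ve_1^*(H_m^*+\|\vc\|V_m^*\vb\ve_1^*)^kV_m^*=\vc^*(A+\vb\vc^*)^k$ directly for $k\le m-1$; combined with $\|\vb\|U_mG_m^{j-1-k}\ve_1=A^{j-1-k}\vb$ and the resolved recursion $X_m(p_j)=\sum_{k=0}^{j-1}G_m^{j-1-k}(\|\vb\|\|\vc\|\ve_1\ve_1^*)(H_m^*+\|\vc\|V_m^*\vb\ve_1^*)^k$, the result follows immediately from Proposition~\ref{pro:matrix_identity}.
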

\begin{proof}
By linearity, it suffices to show that the result holds for every monomial $p_j(z) = z^j$, $j = 0,\dots,m$. We recall that $X_m(p_j)$ is the $(1,2)$-block of the matrix
$$\begin{bmatrix}
G_m & \|\vb\|\|\vc\|\ve_1\ve_1^\ast \\
0 & (H_m^\ast + \|\vc\|V_m^\ast\vb \ve_1^\ast)
\end{bmatrix}^j.$$
In particular, $X_m(p_1) = \|\vb\|\|\vc\|\ve_1\ve_1^\ast$. By~\eqref{eq:Xm_nonhermitian}, the result of the theorem is shown if we can establish
\begin{equation} \label{eq:exactpower}
 (A+\vb\vc^\ast)^j - A^j = U_m X_m(p_j) V_m^*.
\end{equation}

Considering 
$$\setlength{\arraycolsep}{1pt}
\begin{bmatrix}
G_m & \|\vb\|\|\vc\|\ve_1\ve_1^\ast \\
0 & (H_m^\ast + \|\vc\|V_m^\ast\vb \ve_1^\ast)
\end{bmatrix}^j\!\!\!\!=\!\! \begin{bmatrix}
G_m^{j-1} & X_m(p_{j-1}) \\
0 & (H_m^\ast + \|\vc\|V_m^\ast\vb \ve_1^\ast)^{j-1}
\end{bmatrix}\!\!\!
\begin{bmatrix}
G_m & X_m(p_1) \\
0 & (H_m^\ast + \|\vc\|V_m^\ast\vb \ve_1^\ast)
\end{bmatrix},$$
we find the relation
\[
X_m(p_j) = G_m^{j-1}X_m(p_1) + X_m(p_{j-1})(H_m^\ast + \|\vc\|V_m^\ast\vb\ve_1^\ast).
\]
Resolving this recursion gives 
\begin{equation}\label{eq:Ej_recursive}
X_m(p_j) = \sum_{k=0}^{j-1} G_m^{j-1-k}(\|\vb\|\|\vc\| \ve_1\ve_1^\ast) (H_m^\ast + \|\vc\|V_m^\ast\vb\ve_1^\ast)^k.
\end{equation}
Recalling that $G_m$ is the compression of $A$ onto $\spK_m(A,\vb)$, a well-known polynomial exactness property (see, e.g.,~\cite{Saad1992,Ericsson1990}) of Krylov subspace approximations yields
\begin{equation}\label{eq:polynomial_exactness_vector1}
\|\vb\| U_m G_m^{\ell} \ve_1 = A^\ell\vb  \text{ for all } \ell = 0,\dots,m-1.
\end{equation}
Similarly, by noting that $H_m + \|\vc\| \ve_1 \vb V_m^\ast$ is the compression of $(A+\vb\vc^\ast)^\ast$ onto
$\spK_m(A^\ast,\vc) = \spK_m((A+\vb\vc^\ast)^\ast,\vc)$, we obtain
\begin{equation}\label{eq:polynomial_exactness_vector2}
\|\vc\| \ve_1^\ast (H_m^\ast+\|\vc\|V_m^\ast\vb\ve_1^\ast)^{\ell}V_m^\ast = \vc^\ast (A+\vb\vc^\ast)^\ell \text{ for all } \ell = 0,\dots,m-1.
\end{equation}
Multiplying~\eqref{eq:Ej_recursive} by $U_m$ from the left and by $V_m^\ast$ from the right then gives
$$U_mX_m(p_j)V_m^\ast = \sum_{k=0}^{j-1} U_mG_m^{j-1-k}(\|\vb\|\|\vc\| \ve_1\ve_1^\ast) (H_m^\ast + \|\vc\|V_m^\ast\vb\ve_1^\ast)^kV_m^\ast$$
which by \eqref{eq:polynomial_exactness_vector1}--\eqref{eq:polynomial_exactness_vector2} gives for $j \leq m$ the relation
$$U_mX_m(p_j)V_m^\ast = \sum_{k=0}^{j-1} A^{j-1-k}\vb\vc^\ast(A+\vb\vc^\ast)^k = (A+\vb\vc^\ast)^j - A^j,$$
where we used Proposition~\ref{pro:matrix_identity} in the second equality. This establishes~\eqref{eq:exactpower} and thus completes the proof.
\end{proof}

\section{Convergence analysis based on polynomial approximation problems} \label{sec:convergenceanalysis}

In this section, we give bounds for the error of the approximations~\eqref{eq:krylov_update_hermitian} and~\eqref{eq:krylov_update_nonhermitian}. The results are based on the polynomial exactness property from Theorem~\ref{the:polynomial_exactness} and connect the approximation quality of~\eqref{eq:krylov_update_hermitian} and~\eqref{eq:krylov_update_nonhermitian} to certain polynomial approximation problems. 

The convergence results in this section rely on a theorem by Crouzeix~\cite{Crouzeix2007,CrouzeixPalencia2017}, for which we first recall some basic concepts. The \emph{field of values} (or \emph{numerical range}) $A \in \Cnn$ is defined as the set
\begin{equation*}
\calW(A) := \{ \vx^\ast\! A \vx : \|\vx\| = 1\},
\end{equation*}
 which is a convex and compact subset of $\C$ containing all eigenvalues of $A$. Further, we make use of the supremum norm 
$$\|f\|_\bbE:= \sup_{z \in \bbE} |f(z)|$$
on a subset $\bbE\subset \C$ for which $f$ is defined. Using these definitions, Crouzeix's theorem states that
\begin{equation}\label{eq:crouzeix}
\|f(A)\| \leq C \|f\|_\bbE
\end{equation}
with a constant $C \leq 1+\sqrt{2}$ for any function $f$ which is analytic in a neighborhood of $\bbE \supseteq \calW(A)$.
Notice that if $A$ is Hermitian then we can compute $\| f(A) \|$ in terms of the eigenvalues of $A$, and thus obviously~\eqref{eq:crouzeix} holds with $C=1$.

We now use~\eqref{eq:crouzeix} together with the polynomial exactness property proven in Theorem~\ref{the:polynomial_exactness} to obtain bounds on the
error at the $m$th step of our method:
\begin{equation}\label{eq:convergence_error}
E_m(f) := f(A+\vb\vc^\ast) - f(A) - U_mX_m(f)V_m^\ast.
\end{equation}
 
\begin{theorem}\label{the:convergence_polynomial_rational}
Let $A$ be Hermitian and let $f$ be defined on a compact set $\bbE$ containing $\calW(A) \cup \calW(A+\vb\vb^\ast)$. Then, the error~\eqref{eq:convergence_error} with $U_m = V_m$ and $X_m(f)$ from~\eqref{eq:Xm_hermitian_difference} satisfies
\begin{equation*}
\|E_m(f)\| \leq 4 \min_{p \in \Pi_m} \|f - p\|_\bbE.
\end{equation*}
\end{theorem}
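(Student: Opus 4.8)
The plan is to reduce the error bound to a polynomial approximation problem by exploiting the polynomial exactness property of Theorem~\ref{the:polynomial_exactness} together with Crouzeix's theorem. Let $p \in \Pi_m$ be arbitrary. The key observation is that the error operator $E_m(\cdot)$ defined in~\eqref{eq:convergence_error} is linear in its argument, and by Theorem~\ref{the:polynomial_exactness} we have $E_m(p) = 0$. Hence $E_m(f) = E_m(f-p)$, and it suffices to bound $\|E_m(f-p)\|$ by $4\|f-p\|_\bbE$ and then take the minimum over $p$.

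To bound $\|E_m(f-p)\|$, write $g := f - p$ and split the three terms:
\begin{equation*}
E_m(g) = g(A+\vb\vb^\ast) - g(A) - U_m X_m(g) U_m^\ast,
\end{equation*}
where $X_m(g) = g(G_m + \|\vb\|^2 \ve_1\ve_1^\ast) - g(G_m)$ from~\eqref{eq:Xm_hermitian_difference}. I would bound each of the (at most) four matrix-function terms $g(A+\vb\vb^\ast)$, $g(A)$, $g(G_m + \|\vb\|^2\ve_1\ve_1^\ast)$, $g(G_m)$ separately. Since $A$ and $A+\vb\vb^\ast$ are Hermitian, $\|g(A)\| \le \|g\|_{\calW(A)} \le \|g\|_\bbE$ and $\|g(A+\vb\vb^\ast)\| \le \|g\|_{\calW(A+\vb\vb^\ast)} \le \|g\|_\bbE$ directly, using the spectral mapping theorem (this is the $C=1$ case of Crouzeix noted in the text). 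For the compressed matrices, the crucial fact is that $G_m = U_m^\ast A U_m$ is a compression of $A$, so $\calW(G_m) \subseteq \calW(A) \subseteq \bbE$; similarly $G_m + \|\vb\|^2\ve_1\ve_1^\ast = U_m^\ast(A+\vb\vb^\ast)U_m$ is a compression of $A+\vb\vb^\ast$, so its field of values lies in $\calW(A+\vb\vb^\ast) \subseteq \bbE$. Both $G_m$ and $G_m + \|\vb\|^2\ve_1\ve_1^\ast$ are Hermitian, so again $\|g(G_m)\| \le \|g\|_\bbE$ and $\|g(G_m + \|\vb\|^2\ve_1\ve_1^\ast)\| \le \|g\|_\bbE$. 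Since $\|U_m\| = 1$, the term $\|U_m X_m(g) U_m^\ast\| \le \|X_m(g)\| \le 2\|g\|_\bbE$. Adding up, $\|E_m(g)\| \le \|g\|_\bbE + \|g\|_\bbE + 2\|g\|_\bbE = 4\|g\|_\bbE$, and taking the infimum over $p \in \Pi_m$ yields the claim.

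The main thing to be careful about is the inclusion $\calW(G_m) \subseteq \calW(A)$ for the compressed matrix: this follows because for any unit vector $\vy \in \C^m$, the vector $\vx = U_m \vy$ is a unit vector in $\C^n$ (since $U_m$ has orthonormal columns) with $\vy^\ast G_m \vy = \vy^\ast U_m^\ast A U_m \vy = \vx^\ast A \vx \in \calW(A)$. The same reasoning applies verbatim to $G_m + \|\vb\|^2\ve_1\ve_1^\ast$ with $A$ replaced by $A+\vb\vb^\ast$, using the identity from~\eqref{eq:Xm_hermitian_difference} that this matrix equals $U_m^\ast(A+\vb\vb^\ast)U_m$. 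Everything else is routine; the only subtlety worth flagging is that one needs $f$ (hence $g = f-p$) to be defined on the spectra of $G_m$ and $G_m + \|\vb\|^2\ve_1\ve_1^\ast$, which is guaranteed since these spectra lie in $\bbE$ by the field-of-values inclusion and $f$ is assumed defined on $\bbE$. No use of the full Crouzeix constant $1+\sqrt2$ is needed here because everything in sight is Hermitian; the constant $4$ comes purely from counting the four terms, each contributing a factor $1$ (or the factor $2$ split as $1+1$ from $X_m(g)$).
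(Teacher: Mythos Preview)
Your proof is correct and follows essentially the same approach as the paper: use polynomial exactness to replace $f$ by $f-p$, split $E_m(f-p)$ into the four Hermitian matrix-function terms, and bound each by $\|f-p\|_\bbE$ via the spectral mapping theorem together with the field-of-values inclusions $\calW(G_m)\subseteq\calW(A)$ and $\calW(G_m+\|\vb\|^2\ve_1\ve_1^\ast)\subseteq\calW(A+\vb\vb^\ast)$. Your write-up is in fact slightly more explicit than the paper's in justifying the compression inclusion and the well-definedness of $g$ on the compressed spectra.
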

\begin{proof}
By the polynomial exactness property of $U_mX_m(f)U_m^\ast$ stated in Theorem~\ref{the:polynomial_exactness}, we have
$$E_m(f) = E_m(f) - p(A+\vb\vb^\ast) + p(A) + U_mX_m(p)U_m^\ast = E_m(f-p)$$
for any polynomial $p \in \Pi_m$. For arbitrary $p \in \Pi_m$ we thus have
\begin{eqnarray}\|E_m(f)\| &=& \|E_m(f-p)\| \nonumber\\
&=& \|(f-p)(A+\vb\vb^\ast) - (f-p)(A) - U_mX_m(f-p)U_m^\ast\|\nonumber \\
&\leq& \|(f-p)(A+\vb\vb^\ast)\| - \|(f-p)(A)\| - \|U_mX_m(f-p)U_m^\ast\| \nonumber\\
&\leq& 2\|f-p\|_\bbE+ \|U_mX_m(f-p)U_m^\ast\|, \label{eq:proof_fp}
\end{eqnarray}
where the last inequality follows from~\eqref{eq:crouzeix}. By~\eqref{eq:krylov_update_hermitian} and~\eqref{eq:Xm_hermitian_difference}, we have
$$UX_m(f-p)U_m^\ast = U_m \big((f-p)\big(G_m + \|\vb\|^2 \ve_1 \ve_1^\ast \big) - (f-p)( G_m )\big)U_m^\ast.$$ 
In turn,
\begin{equation}\label{eq:proof_fp2}
\|U_mX_m(f-p)U_m^\ast\| = \|X_m(f-p)\| \leq 2\|f-p\|_\bbE,
\end{equation}
where, using $\calW(G_m) \subseteq \calW(A)$ and $\calW(G_m + \|\vb\|\ve_1\ve_1^\ast) \subseteq \calW(A+\vb\vb^\ast)$, we again applied~\eqref{eq:crouzeix}.  Inserting~\eqref{eq:proof_fp2} into~\eqref{eq:proof_fp} and taking the minimum over all $p \in \Pi_m$ completes the proof.
\end{proof}

Theorem~\ref{the:convergence_polynomial_rational} allows to derive convergence bounds from known polynomial approximation results for analytic functions. The obtained convergence rates will typically be exponential for functions which are analytic in a neighborhood of $\bbE$ and superlinear for entire functions like the exponential.


To extend Theorem~\ref{the:convergence_polynomial_rational} to the non-Hermitian case, we have to assume that $f$ is analytic on the field of values of the block matrix $\calA$ from~\eqref{eq:calA}.
\begin{theorem}\label{the:convergence_polynomial_rational_nonhermitian}
Let $\mathcal A := \left[\begin{array}{cc} A & \vb\vc^\ast \\ 0 & A+\vb\vc^\ast\end{array}\right]$ and let $f$ be analytic in a neighborhood of a compact set $\bbE$ containing $\calW(\mathcal A)$. Then, the error~\eqref{eq:convergence_error}, with $U_m$, $X_m(f)$, and $V_m$ computed by Algorithm~\ref{alg:krylovnonhermitian}, satisfies
\begin{equation*}
\|E_m(f)\| \leq 2C \min_{p \in \Pi_m} \|f - p\|_\bbE
\end{equation*}
with a constant $C \leq 1+\sqrt{2}$.
\end{theorem}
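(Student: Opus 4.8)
The plan is to mimic the structure of the proof of Theorem~\ref{the:convergence_polynomial_rational}, replacing the two separate appearances of Crouzeix's inequality (one for $A$ and $A+\vb\vc^\ast$, one for $G_m$ and $H_m^\ast+\ldots$) by a single application on the block matrix $\calA$ and its compression. First I would invoke the polynomial exactness property from Theorem~\ref{the:polynomial_exactness}: for any $p \in \Pi_m$ we have $p(A+\vb\vc^\ast)-p(A) = U_m X_m(p) V_m^\ast$, hence $E_m(f) = E_m(f-p)$, and it suffices to bound $\|E_m(f-p)\|$ for arbitrary $p \in \Pi_m$ and then take the minimum.

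The key observation is that $E_m(g)$ for $g = f-p$ is exactly the $(1,2)$-block of $g(\calA) - \widetilde g$, where $\widetilde g$ denotes the block-diagonal matrix obtained by padding $U_m X_m(g) V_m^\ast$ appropriately; more precisely, by Lemma~\ref{lem:block} the $(1,2)$-block of $g(\calA)$ is $g(A+\vb\vc^\ast)-g(A)$, and the $(1,2)$-block of $f$ applied to the compressed matrix in~\eqref{eq:Xm_nonhermitian} is $X_m(g)$ by construction. So I would write
\begin{equation*}
E_m(g) = P_1^\ast \left( g(\calA) - Q\, g(\widetilde{\calA})\, Q^\ast \right) P_2,
\end{equation*}
where $\widetilde{\calA}$ is the compressed $2m \times 2m$ matrix from~\eqref{eq:Xm_nonhermitian}, $Q = \mathrm{diag}(U_m,V_m)$, and $P_1, P_2$ are the block coordinate injections, so that taking norms and using $\|P_i\| = 1$, $\|Q\| = 1$ gives
\begin{equation*}
\|E_m(g)\| \leq \|g(\calA)\| + \|g(\widetilde{\calA})\|.
\end{equation*}
Now Crouzeix's inequality~\eqref{eq:crouzeix} bounds the first term by $C\|g\|_{\calW(\calA)} \leq C \|g\|_\bbE$. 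For the second term, the essential point is that $\widetilde{\calA} = Q^\ast \calA\, Q$ is a compression of $\calA$, hence $\calW(\widetilde{\calA}) \subseteq \calW(\calA) \subseteq \bbE$, so Crouzeix again yields $\|g(\widetilde{\calA})\| \leq C\|g\|_\bbE$. Summing gives $\|E_m(f-p)\| \leq 2C\|f-p\|_\bbE$; minimizing over $p \in \Pi_m$ completes the proof.

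The main obstacle is the bookkeeping to verify that $\widetilde{\calA}$ really equals $Q^\ast \calA Q$ with $Q = \mathrm{diag}(U_m, V_m)$ — this requires checking that $V_m^\ast A^\ast V_m = H_m$, that $U_m^\ast \vb\vc^\ast V_m = \|\vb\|\|\vc\|\ve_1\ve_1^\ast$ (using $U_m^\ast \vb = \|\vb\|\ve_1$ and $V_m^\ast \vc = \|\vc\|\ve_1$), and that the $(2,2)$-block indeed comes out as $H_m^\ast + \|\vc\| V_m^\ast\vb\,\ve_1^\ast$, all of which are already assembled in~\eqref{eq:Xm_nonhermitian}. One technical subtlety worth a remark: one needs $f$ (equivalently $g$) analytic in a neighborhood of $\calW(\calA)$ for~\eqref{eq:crouzeix} to apply to $g(\calA)$, and the same hypothesis automatically covers $g(\widetilde{\calA})$ since $\calW(\widetilde{\calA}) \subseteq \calW(\calA)$; no separate assumption on the spectra of $G_m$ or $H_m^\ast+\|\vc\|V_m^\ast\vb\,\ve_1^\ast$ is needed beyond this. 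The rest is routine, and the argument is in fact slightly cleaner than the Hermitian one because everything is packaged into a single Crouzeix estimate on $\calA$ rather than split across the summands.
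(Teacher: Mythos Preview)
Your proposal is correct and follows essentially the same route as the paper: write $E_m(f-p)$ as the difference of the $(1,2)$-blocks of $(f-p)(\calA)$ and of the lifted $(f-p)(Q^\ast\calA Q)$, bound each summand by Crouzeix using $\calW(Q^\ast\calA Q)\subseteq\calW(\calA)\subseteq\bbE$, and minimize over $p\in\Pi_m$. The paper uses the notation $W_m$ for your $Q=\mathrm{diag}(U_m,V_m)$ and extracts the $(1,2)$-block directly rather than via your $P_1,P_2$, but the argument is identical.
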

\begin{proof}
By Lemma~\ref{lem:block}, the update $f(A+\vb\vc^\ast) - f(A)$ is the (1,2) block of $f(\mathcal A)$, which we denote by $\big[ f(\mathcal A) \big]_{1,2}$.
Letting $W_m = \left[\begin{array}{cc} U_m & 0 \\ 0 & V_m \end{array}\right]$, we note that the columns of $W_m$ are orthonormal and
\[
 U_mX_m(f)V_m^\ast = [W_m\, f(W_m^\ast\calA W_m)W_m^\ast]_{1,2}
\]
holds by the definition of $X_m(f)$.

As in the proof of Theorem~\ref{the:convergence_polynomial_rational} we use the fact that $E_m(f) = E_m(f-p)$ for any $p \in \Pi_m$. Thus, we obtain for arbitrary $p \in \Pi_m$ that
\begin{eqnarray}
\|E_m(f)\| &=& \|E_m(f-p)\| \nonumber\\
&=& \|(f-p)(A+\vb\vc^\ast) - (f-p)(A) - U_mX_m(f-p)V_m^\ast\|\nonumber \\
&\leq& \|[(f-p)(\calA)]_{1,2}\| + \|[W_m\,(f-p)(W_m^\ast\calA W_m)W_m^\ast]_{1,2}\| \nonumber\\
&\leq& \|(f-p)(\calA)\| + \|(f-p)(W_m^\ast\calA W_m)\| \nonumber\\
&\leq& 2 C\|f-p\|_\bbE \nonumber 
\end{eqnarray}
where the last inequality follows from Crouzeix's theorem, using $\calW(W_m^\ast \calA W_m) \subseteq W(\calA) \subseteq \bbE$.
Taking the minimum over all $p \in \Pi_m$ gives the desired result.
\end{proof}

Note that the matrix $\calA$ from Theorem~\ref{the:convergence_polynomial_rational_nonhermitian} can be easily block-diagonalized:
\[
\calT^{-1} \calA \calT = \begin{bmatrix} A & \\ & A+\vb\vc^\ast\end{bmatrix}, \quad \text{with} \quad \calT = \begin{bmatrix} I & I \\\ 0 & I \end{bmatrix},
\]
with the matrix $\calT$ having the very modest $2$-norm condition number $\kappa(\calT) =  \big( \frac{1+\sqrt{5}}{2} \big)^2$. Unfortunately, this does not seem to admit any meaningful conclusion about the numerical range of $\calA$. In fact, we are not aware of any tight relationship between $\calW(\calA)$ and the numerical ranges of $A$, $A+\vb \vc^*$. Writing
$$\calA = \begin{bmatrix} A & 0 \\ 0 & A \end{bmatrix} + \begin{bmatrix} 0 & \vb\vc^\ast \\0  & \vb\vc^\ast\end{bmatrix} = \begin{bmatrix} A & 0 \\ 0 & A \end{bmatrix} + \begin{bmatrix} \vb \\ \vb \end{bmatrix} [\vnull^\ast, \vc^\ast]$$
we obtain from~\cite[Section 1.0.1 \& Property 1.2.10]{HornJohnson1991} the inclusion
\begin{equation*}
\calW(\calA) \subseteq \calW(A) + \calW(\vu\vv^\ast), \quad \vu =\begin{bmatrix} \vb \\ \vb \end{bmatrix},\, \vv = \begin{bmatrix} \vnull \\ \vc \end{bmatrix}
\end{equation*}
where $+$ refers to the Minkowski sum of sets. In general, the field of values of a rank-one matrix $\vu\vv^\ast$ is an ellipse with focal points $0$ and $\vv^\ast\vu$ and minor semi-axis $\frac{1}{2} (\|\vu\|^2 \, \|\vv\|^2 - |\vv^\ast\vu|^2)^{1/2}$, which in our special case amounts to focal points $0$ and $\vc^\ast\vb$ and minor semi-axis $\frac{1}{2}(2\|\vb\|^2\, \|\vc\|^2 - |\vc^\ast\vb|^2)^{1/2}$.

In Section~\ref{sec:convdiff}, we will illustrate for an example of practical relevance that $\calW(\calA)$ can have rather undesirable properties, to the extent that Theorem~\ref{the:convergence_polynomial_rational_nonhermitian} is of little help in understanding the convergence of our algorithms. In the next section, we therefore derive convergence bounds by an approach that avoids the dependence on $\calW(\calA)$ and only depends on $\calW(A)$ and $\calW(A+\vb\vc^\ast)$. While the second set may still be larger than $\calW(A)$, it is at least always smaller than $\calW(\calA)$.

\section{Convergence results based on integral representations}\label{sec:convergence_integral}

We begin by considering results based on the Cauchy integral formula in Section~\ref{subsec:cauchy} and then focus on the special case of Markov functions in Section~\ref{subsec:markov}.

\subsection{Convergence results based on the Cauchy integral formula}\label{subsec:cauchy}

Let $f$ be analytic on a domain $\bbE$ containing the eigenvalues of $A$ and $A+\vb\vc^\ast$. We recall~\eqref{eq:integralformulaupdate}:
\begin{equation}
f(A + \vb\vc^\ast) - f(A) = -\frac{1}{2\pi i}\int_\Gamma f(z) (zI-A)^{-1}\vb\vc^\ast(zI - A  - \vb\vc^\ast)^{-1}\d z\label{eq:update_integral},
\end{equation}
with $\Gamma = \partial\bbE$. The integrand in~\eqref{eq:update_integral} involves solutions of shifted linear systems. Letting
\begin{equation}\label{eq:shifted_systems} 
(zI - A)\vx(z) = \vb \text{ and } (zI - A -  \vb\vc^\ast)^\ast \vy(z) = \vc,
\end{equation}
we can compactly write the right-hand side of~\eqref{eq:update_integral} as
\begin{equation}\label{eq:integral_compact}
f(A + \vb\vc^\ast) - f(A) = \frac{1}{2 \pi i}\int_\Gamma f(z) \vx(z)\vy(z)^\ast \d z.
\end{equation}
Now, consider the FOM~\cite{Saad1981} approximations for~\eqref{eq:shifted_systems}, given by
\begin{eqnarray*}
\vx_m(z) &:=& \|\vb\| U_m (zI-G_m)^{-1}\ve_1, \nonumber\\
 \vy_m(z) &:=& \|\vc\|V_m(\bar z I - H_m - \|\vc\| \ve_1 \vb^\ast V_m)^{-1}\ve_1,
\end{eqnarray*}
where we used the Arnoldi decompositions~\eqref{eq:arnoldi_relationU} and~\eqref{eq:arnoldi_relationV}. 
Recalling that $X_m(f)$ is defined as the (1,2) block of 
$$f \left( \begin{bmatrix}
G_m &  \|\vb\|\|\vc\| \ve_1\ve_1^\ast \\
0 & H_m^\ast + \|\vc\|V_m^\ast\vb\ve_1^\ast
\end{bmatrix} \right)$$
and using contour integration, as in the proof of Lemma~\ref{lem:block}, we find that
\begin{equation} \label{eq:krylov_projection_integral}
U_mX_m(f) V_m^\ast = - \frac{1}{2\pi i} \int_\Gamma f(z) \vx_m(z)\vy_m(z)^\ast \d z
\end{equation}

In other words, the approximation to the update matrix can be interpreted as the integral over outer products of FOM approximations for the shifted linear systems~\eqref{eq:shifted_systems}.

Inserting~\eqref{eq:integral_compact} and~\eqref{eq:krylov_projection_integral} into the definition~\eqref{eq:convergence_error} of the error gives
\begin{eqnarray}
E_m(f) &=& -\frac{1}{2\pi i} \int_\Gamma f(z) \big(\vx(z)\vy(z)^\ast -  \vx_m(z)\vy_m(z)^\ast\big) \d z \nonumber\\
 &=& -\frac{1}{2 \pi i} \int_\Gamma f(z) \big( \vx(z)(\vy(z)-\vy_m(z))^\ast + (\vx(z) - \vx_m(z))\vy_m(z)^\ast \big) \d z. \label{eq:Em_integral}
\end{eqnarray}
Convergence estimates can now be obtained by taking norms in the contour integral~\eqref{eq:Em_integral} and bounding all occurring terms. To do so, we first introduce some notation. In the following, $\bbE$ is a compact, convex set containing both $\calW(A)$ and $\calW(A+\vb\vc^\ast)$. The closed unit disk is denoted by $\bbD$. Denoting by $\barC := \C \cup \{\infty\}$ the extended complex plane, let $\psi$ be the conformal mapping from $\barC \setminus \bbD$ onto $\barC \setminus \bbE$, normalized at infinity such that $\psi(\infty) = \infty$, $\psi^\prime(\infty) > 0$. Note that $\psi^\prime(w)$ exists for almost all $w$ on $\partial \bbD$. We are now in the position to state the following norm bounds on the quantities involved in~\eqref{eq:Em_integral}. 

\begin{lemma}\label{lem:norms}
For all $m \geq 2$ and $z=\psi(u)\in \C \setminus \mathbb E$ we have 
\begin{eqnarray} &&\label{eq:norm}
    \max\Bigl\{ \frac{\| \vx(z) \|}{\| \vb \|} , \frac{\| \vx_m(z) \|}{\| \vb \|}, \frac{\| \vy(z) \|}{\| \vc \|} , \frac{\| \vy_m(z) \|}{\| \vc \|}
   \Bigr\} \leq \frac{1}{\dist(z,\mathbb E)} 
   \leq          \frac{|u|/\psi'(\infty)}{(|u|-1)^2},
    \\ && \label{eq:error}
    \max\Bigl\{  \frac{ \| \vx(z) - \vx_m(z) \| }{\| \vb  \|} ,
    \frac{ \| \vy(z) - \vy_m(z) \| }{\| \vc  \|}  \Bigr\}
    \leq \frac{4|u|^{1-m}}{|\psi'(u)|(|u|^2-1)} \leq \frac{4|u|^{-m}}{\dist(z,\mathbb E)} ,
\end{eqnarray}
where $\dist(z,\bbE)$ denotes the distance of $z$ from $\bbE$.
\end{lemma}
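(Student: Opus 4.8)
The plan is to prove the four inequalities by treating the two ``groups'' of estimates separately: the norm bounds in~\eqref{eq:norm}, and the error bounds in~\eqref{eq:error}. For~\eqref{eq:norm}, the key observation is that all four vectors solve (exact or Galerkin-projected) shifted linear systems whose coefficient matrices are $zI - A$, $(zI-A-\vb\vc^\ast)^\ast$, or their compressions. Since $\bbE$ is convex and contains $\calW(A)$ and $\calW(A+\vb\vc^\ast)$, it also contains $\calW(G_m)$ and $\calW(H_m^\ast + \|\vc\|V_m^\ast\vb\ve_1^\ast)$ (numerical ranges of compressions are contained in the original numerical range). For $z \notin \bbE$, standard resolvent estimates give $\|(zI-M)^{-1}\| \leq 1/\dist(z,\calW(M)) \leq 1/\dist(z,\bbE)$ whenever $\calW(M)\subseteq\bbE$; applying this to $M \in \{A, A+\vb\vc^\ast, G_m, H_m^\ast+\|\vc\|V_m^\ast\vb\ve_1^\ast\}$ and using $\|\vu_1\|=\|\vv_1\|=1$ yields the first inequality in~\eqref{eq:norm}. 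The second inequality, $\dist(z,\bbE)^{-1} \leq |u|/(\psi'(\infty)(|u|-1)^2)$, is a classical distortion-type estimate for the conformal map $\psi$: one uses the Koebe-type bound $|\psi(u) - \psi(w)| \geq \psi'(\infty)\,\frac{(|u|-1)^2}{|u|}$ (or a comparable lower bound via the Koebe $1/4$-theorem applied to $1/\psi(1/\cdot)$ or to the exterior map) to bound the distance from $z = \psi(u)$ to $\bbE = \psi(\partial\bbD \cup \text{interior})$ from below.

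For the error bounds~\eqref{eq:error}, the idea is to invoke a sharp convergence estimate for FOM/Arnoldi applied to a single shifted linear system with a coefficient matrix whose numerical range lies in $\bbE$. The standard result (Beckermann--Reichel type, or the estimates in the Krylov-for-$f(A)\vb$ literature that the paper elsewhere relies on) states that the $m$-step FOM residual — equivalently, after dividing by the resolvent norm, the error — satisfies a bound of the form $\|\vx(z) - \vx_m(z)\| \leq C\,\|\vb\|\cdot\frac{|u|^{1-m}}{|\psi'(u)|(|u|^2-1)}$ for $z = \psi(u)$ outside $\bbE$, with $C$ an absolute constant; here the factor $4$ absorbs both Crouzeix's constant $1+\sqrt 2$ and the geometric-series constant, valid for $m\geq 2$. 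The same bound applies verbatim to $\vy_m(z)$ because it is the FOM approximation for the shifted system with matrix $(A+\vb\vc^\ast)^\ast$ restricted to $\spK_m(A^\ast,\vc) = \spK_m((A+\vb\vc^\ast)^\ast,\vc)$, whose numerical range is again in $\bbE$. Finally, the passage from $\frac{4|u|^{1-m}}{|\psi'(u)|(|u|^2-1)}$ to $\frac{4|u|^{-m}}{\dist(z,\bbE)}$ uses once more a distortion estimate for $\psi$, now in the form $|\psi'(u)|(|u|^2-1) \geq |u|\,\dist(\psi(u),\bbE)$, which is the ``Koebe distortion at $u$'' counterpart of the simpler estimate used in~\eqref{eq:norm}.

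I expect the main obstacle to be assembling the precise FOM convergence estimate with the stated constant $4$ and the explicit conformal-map-dependent rate $\frac{|u|^{1-m}}{|\psi'(u)|(|u|^2-1)}$ in a self-contained way. The bound in~\eqref{eq:error} is not the textbook ``polynomial min-max'' form; it is the refined version obtained by writing the FOM error as a contour integral of the resolvent times a Faber-type polynomial and optimizing, or by citing the Beckermann--Reichel estimate for the GMRES/FOM error on sets with given conformal map. One must check that this estimate genuinely applies to both $\vx_m$ and $\vy_m$ (which requires the numerical-range containments noted above, and the identification of the two Krylov spaces via $\spK_m(A^\ast,\vc)=\spK_m((A+\vb\vc^\ast)^\ast,\vc)$, since $(A+\vb\vc^\ast)^\ast\vc - A^\ast\vc = \vc(\vb^\ast\vc)$ lies in $\Span\{\vc\}$) and that the constant is uniform in $m\geq 2$ and in $z$ on the relevant contour. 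The conformal-distortion inequalities, by contrast, are standard and I would simply cite them; the resolvent norm bounds are elementary. So the real work is a careful statement and citation (or short proof) of the shifted-FOM error estimate underlying~\eqref{eq:error}.
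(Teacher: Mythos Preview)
Your overall strategy matches the paper's: resolvent bounds via numerical-range containment for the first inequality in~\eqref{eq:norm}, conformal-map distortion estimates for the second, and a polynomial approximation of the resolvent (applied to both $A$ and its compression $G_m$, then combined via polynomial exactness) for~\eqref{eq:error}. The identification $\spK_m(A^\ast,\vc)=\spK_m((A+\vb\vc^\ast)^\ast,\vc)$ that you note is exactly what the paper uses for the $\vy$-side.

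Two points of divergence are worth flagging. First, your account of the constant $4$ is off: it does \emph{not} come from Crouzeix's $1+\sqrt 2$ together with a geometric-series factor. The paper instead constructs an explicit polynomial $P\in\Pi_{m-1}$ in the $v$-plane (depending on $u$) such that $G(v)-P(v)$ has a simple closed form with $\max_{|v|=1}|G-P|=|u|^{1-m}/\bigl(|\psi'(u)|(|u|^2-1)\bigr)$, and then applies the Faber-operator bound from Beckermann--Reichel (norm $\le 2$ on convex $\bbE$) to get $\|(zI-A)^{-1}-p(A)\|\le 2\cdot\max_{|v|=1}|G-P|$; the same bound holds for $G_m$ because $\calW(G_m)\subset\calW(A)$, and the $4$ is simply $2+2$ from the triangle inequality after subtracting and adding $p(A)\vb=\|\vb\|U_m p(G_m)\ve_1$. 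If you route through Crouzeix's $1+\sqrt 2$ you would land at $2(1+\sqrt 2)>4$, so the stated constant would not be reachable along the line you sketch.

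Second, the distortion inequalities you label ``Koebe-type'' are, in the paper, the specific pair: K\"uhnau's two-sided estimate $\frac{|u|-1}{1+1/|u|}\le \frac{\dist(\psi(u),\bbE)}{|\psi'(u)|}\le \frac{|u|^2-1}{|u|}$ (the upper bound gives the second inequality in~\eqref{eq:error}), and the Gr\"otzsch--Golusin inequality $\bigl|\psi'(u)/\psi'(\infty)-1\bigr|\le 1/|u|^2$ for convex $\bbE$ (combined with K\"uhnau's lower bound, this gives the second inequality in~\eqref{eq:norm}). A bare Koebe-$1/4$ argument would not produce these exact constants. So your plan is structurally correct, but the ``citation'' you anticipate needing for~\eqref{eq:error} is in fact a short Faber-transform construction carried out in the paper, and the conformal-map ingredients require the two named inequalities rather than generic Koebe distortion.
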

\begin{proof}
   The first inequality in~\eqref{eq:norm} follows for $\vx(z)$ immediately from the fact that $$
      \| \vx(z) \| = \| (zI-A)^{-1} \vb \| \leq \| (zI-A)^{-1} \| \, \|\vb \| \leq \frac{\| \vb \|}{\dist(z,\mathbb E)} , 
   $$
   and analogously for $\vx_m(z)$, $\vy(z)$, and $\vy_m(z)$. For the second inequality in~\eqref{eq:norm}, we recall a result by K\"uhnau (see Theorem 3.1 in \cite{TohTrefethen1999} and its proof):
\begin{equation} \label{eq:level}
       \frac{|u|-1}{1+1/|u|} \leq \frac{\dist(\psi(u),\mathbb E)}{|\psi'(u)|}\leq \frac{|u|^2-1}{|u|},
    \end{equation}
    which, in fact, does not require $\bbE$ to be convex.
By convexity of $\bbE$, we also have the inequality
    \begin{equation}\label{eq:golusin}
      \Big| \frac{\psi'(u)}{\psi'(\infty)} - 1 \Big| \leq \frac{1}{|u|^2}
    \end{equation}
    due to Gr\"otzsch and Golusin; see \cite[Section 2]{KovariPommerenke1967}. Combining these two inequalities leads to
    $$
         \frac{1}{\dist(z,\mathbb E)} \leq 
         \frac{1+1/|u|}{|\psi'(u)|(|u|-1)}
         \leq 
         \frac{|u|}{\psi'(\infty)(|u|-1)^2},
    $$
    showing \eqref{eq:norm}.
    
We now turn to the second set \eqref{eq:error} of inequalities. We will make use of properties of the Faber transform $\mathcal F$ of a function analytic in the open unit disk and continuous on the closed unit disk; see~\cite{Ellacott1983,BeckermannReichel2009} for more details. Letting $G(v)= \frac{1}{\psi'(u)(u-v)}$ we obtain
   \begin{eqnarray} \label{eq:definitiong}
        g(\zeta):=\mathcal F(G)(\zeta) &=& \frac{1}{2\pi i} \int_{\partial \mathbb E} G(\psi^{-1}(\widetilde \zeta)) \frac{d\widetilde \zeta}{\widetilde \zeta-\zeta} \\
        &=&\frac{1}{2\pi i} \int_{|v[=1} G(v) \frac{\psi'(v) \d v}{\psi(v)-\zeta} = \frac{1}{\psi(u)-\zeta}, \nonumber
   \end{eqnarray}
   where the last equality follows from the residue theorem; see also~\cite[Eqns (2.5), (2.7)]{BeckermannReichel2009}.
   Let $P$ be defined by the formula
   $$
       P(v) = \frac{\overline u}{\psi'(u)(|u|^2-1)}
       \Bigl( \frac{v}{u}\Bigr)^{m-1}
       + \frac{1}{\psi'(u)(v-u)}
       \Bigl( \Bigl( \frac{v}{u}\Bigr)^{m-1} - 1 \Bigr),
   $$
   depending on the parameter $u$. Then it is straightforward to verify that $P$ is a polynomial of degree at most $m-1$, and that
   $$
        G(v)-P(v) = \frac{1}{\psi'(u)(|u|^2-1)} \frac{v \overline u - 1}{u-v} \Bigl( \frac{v}{u}\Bigr)^{m-1}
   $$ vanishes at $0$ because $m \ge 2$. Thus, with the notation of \cite[\S 2]{BeckermannReichel2009}, and $p=\mathcal F(P)\in \Pi_{m-1}$
   we get that $\mathcal F(G-P)=\mathcal F_+(G-P)=g-p$, using~\eqref{eq:definitiong}. Now Theorem 2.1 in~\cite{BeckermannReichel2009}, being related to the fundamental work of Crouzeix~\cite{Crouzeix2007,CrouzeixPalencia2017}, implies
   $$
        \| (zI -A)^{-1} - p(A)\| =
        \| (g-p)(A) \| \leq 2 \, \max_{|v|=1} | G(v)-P(v) |
        = \frac{2|u|^{1-m}}{|\psi'(u)|(|u|^2-1)} .
   $$
   Since $W(G_m)\subset W(A)$, the same upper bound is obtained for
    $\| (zI-G_m)^{-1} - p(G_m) \|$. Because of $p \in \Pi_{m-1}$ and the exactness property~\eqref{eq:polynomial_exactness_vector1}, we have that $p(A)\vb = \|\vb\| V_m p(G_m)\ve_1$ and thus
 \begin{eqnarray*}
       \frac{ \| \vx(z) - \vx_m(z) \| }{\| \vb  \|}  &=& \frac{ \| \vx(z) - p(A)\vb - (\vx_m(z) - \|\vb\| V_m p(G_m)\ve_1) \| }{\| \vb  \|}  \\
&\leq& \| (zI-A)^{-1} - p(A) \| + \| (zI-G_m)^{-1} - p(G_m) \| \\
       &\leq& \frac{4|u|^{1-m}}{|\psi'(u)|(|u|^2-1)} ,
    \end{eqnarray*}
    that is, we have shown the first inequality of \eqref{eq:error}. 
    The second inequality follows from a combination with the second inequality in \eqref{eq:level}. The inequalities in~\eqref{eq:error} involving $\vy(z)$ instead of $\vx(z)$ are proven in a completely analogous fashion.
\end{proof}

In order to state our first explicit convergence bound, consider the (compact) level sets $\mathbb E_r$, which are defined via their complements as $\bbE_r = \C \setminus \bbE_r^c$, where $\bbE_r^c : = \{z \in \C \setminus \bbE : |\psi^{-1}(z)| > r\}$.

\begin{theorem}\label{the:cauchy}
Suppose that $f$ is analytic in $\bbE_R$. Then, for $1< r < R$
    $$
        \| E_m(f) \| \leq R^{-m-1} 
        \frac{16/\psi'(\infty)}{(1-r/R)(1-1/r)^3} \, \max_{z \in \Gamma_R} |f(z)| \, \| \vb \| \, \| \vc \|,
    $$
		where $\Gamma_R = \partial \bbE_R$.
\end{theorem}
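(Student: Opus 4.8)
The plan is to combine the error representation~\eqref{eq:Em_integral} with the norm bounds from Lemma~\ref{lem:norms}, integrating over the level curve $\Gamma_R$ rather than $\partial\bbE$. First I would observe that the integrand in~\eqref{eq:Em_integral} is analytic in $z$ on $\bbE_R \setminus \bbE$ (the only singularities come from the resolvents $(zI-A)^{-1}$ and $(zI-A-\vb\vc^\ast)^{-1}$, which are analytic outside $\calW(A)\cup\calW(A+\vb\vc^\ast)\subseteq\bbE$, and from $f$, which is analytic in $\bbE_R$), so by Cauchy's theorem the contour of integration in~\eqref{eq:Em_integral} may be deformed from $\partial\bbE$ to $\Gamma_R = \partial\bbE_R$. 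On $\Gamma_R$ we have $z = \psi(u)$ with $|u| = R$.

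Next I would take absolute values inside the integral and apply Lemma~\ref{lem:norms}. The two terms in~\eqref{eq:Em_integral} are $\vx(z)(\vy(z)-\vy_m(z))^\ast$ and $(\vx(z)-\vx_m(z))\vy_m(z)^\ast$; each is bounded in norm by $\|\vb\|\,\|\vc\|$ times a product of one factor from~\eqref{eq:norm} and one factor from~\eqref{eq:error}. Using the right-most bounds in the lemma with $|u| = R$, each such product is at most
$$
\frac{R/\psi'(\infty)}{(R-1)^2} \cdot \frac{4 R^{-m}}{\dist(z,\bbE)} \leq \frac{4 R^{-m}}{\psi'(\infty)} \cdot \frac{R}{(R-1)^2} \cdot \frac{R/\psi'(\infty)}{(R-1)^2},
$$
so that $\|\vx(z)\vy(z)^\ast - \vx_m(z)\vy_m(z)^\ast\| \leq 8\|\vb\|\,\|\vc\|\, R^{-m} R^2 / (\psi'(\infty)^2 (R-1)^4)$, uniformly for $z\in\Gamma_R$. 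Meanwhile $|f(z)| \leq \max_{z\in\Gamma_R}|f(z)|$, and the length of $\Gamma_R$ is $|\Gamma_R| = \int_{|u|=R} |\psi'(u)|\,|\d u|$; by the second inequality in~\eqref{eq:level} this is at most $\int_{|u|=R} \frac{|u|^2-1}{|u|}\,|\d u| = 2\pi(R^2-1)$.

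Then the standard $ML$-estimate for contour integrals gives
$$
\|E_m(f)\| \leq \frac{1}{2\pi}\, |\Gamma_R|\, \max_{z\in\Gamma_R}|f(z)|\, \cdot \frac{8\,\|\vb\|\,\|\vc\|\,R^{2-m}}{\psi'(\infty)^2 (R-1)^4} \leq \frac{8(R^2-1) R^{2-m}}{\psi'(\infty)^2(R-1)^4}\,\max_{z\in\Gamma_R}|f(z)|\,\|\vb\|\,\|\vc\|,
$$
and after simplifying $(R^2-1)/(R-1)^4 = (R+1)/(R-1)^3$ and rewriting in terms of $r$ via an intermediate deformation — here I would actually be more careful: one should integrate over $\Gamma_r$ (not $\Gamma_R$) to use the decay factor $r^{-m}$ from~\eqref{eq:error} while still bounding $|f|$ on the larger curve $\Gamma_R$ through a maximum-modulus argument, which introduces the factor $1/(1-r/R)$ from summing a geometric-type tail or from a Cauchy estimate on the annulus between $\Gamma_r$ and $\Gamma_R$. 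Reconciling the powers of $\psi'(\infty)$ and the constant $16$ in the claimed bound with the factors of $1/\psi'(\infty)$ appearing in~\eqref{eq:norm} is the bookkeeping one must get exactly right; writing $\psi'(\infty) = \mathrm{cap}(\bbE)$ and noting that only one factor of $1/\psi'(\infty)$ survives because the other $\|\vx\|$-type factor is bounded by $1/\dist(z,\bbE)$ which, on $\Gamma_r$, can alternatively be estimated without a $\psi'(\infty)$ via~\eqref{eq:level}, is likely how the final form is obtained.

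The main obstacle is therefore not any deep idea but the precise allocation of the three building blocks — the contour length $\sim(R^2-1)$ or $\sim(r^2-1)$, the resolvent norm bounds $\sim R/(\psi'(\infty)(R-1)^2)$, and the error decay $\sim R^{-m}/\dist(z,\bbE)$ — across the two curves $\Gamma_r$ and $\Gamma_R$ so that exactly one $\psi'(\infty)$, exactly the powers $(1-1/r)^3$ and $(1-r/R)$, and the constant $16$ emerge; in particular one must decide whether to estimate $\dist(z,\bbE)$ from below by $\psi'(\infty)(|u|-1)^2/|u|$ (losing a $\psi'(\infty)$) or by $|\psi'(u)|(|u|-1)/(1+1/|u|)$ (keeping it, at the cost of a $|\psi'(u)|$ that then cancels against the arc-length element), and the choice must be made consistently for each of the two factors so that the net power of $\psi'(\infty)$ is $-1$.
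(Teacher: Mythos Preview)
There is a genuine gap: you never invoke the polynomial exactness property $E_m(f)=E_m(f-p)$ from Theorem~\ref{the:polynomial_exactness}, and without it the stated bound cannot be reached. The paper integrates over $\Gamma_r$, pulls out $\|f-p\|_{\Gamma_r}$, and then uses a Bernstein-type Faber estimate
\[
\min_{p\in\Pi_m}\|f-p\|_{\Gamma_r}\;\le\;\Big(\frac{r}{R}\Big)^m\frac{2r}{R-r}\,\max_{z\in\Gamma_R}|f(z)|,
\]
which is precisely what supplies the extra factor $(r/R)^m$ converting the $r^{-m}$ decay from Lemma~\ref{lem:norms} into $R^{-m}$, together with the $(1-r/R)^{-1}$. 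Your proposed ``maximum-modulus argument'' or ``Cauchy estimate on the annulus'' does not do this: maximum modulus only gives $\|f\|_{\Gamma_r}\le\max_{\Gamma_R}|f|$ with no $(r/R)^m$ gain, so integrating over $\Gamma_r$ without subtracting a polynomial yields at best a bound proportional to $r^{-m-1}$ rather than $R^{-m-1}$. This is not bookkeeping; it is the missing idea.

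Two secondary errors in your first attempt: you bound $1/\dist(z,\bbE)$ twice via~\eqref{eq:norm}, picking up $\psi'(\infty)^{-2}$, whereas the paper uses the \emph{first} inequality in~\eqref{eq:error} (the one with $|\psi'(u)|$ in the denominator) so that $|\psi'(u)|$ cancels against the arc-length element $|\psi'(u)|\,|\d u|$, leaving a single $\psi'(\infty)^{-1}$ from the $\|\vx(z)\|$ bound. Also, the second inequality in~\eqref{eq:level} is a \emph{lower} bound on $|\psi'(u)|$, so it does not bound $|\Gamma_R|$ from above as you claim. But even with these fixed, integrating directly over $\Gamma_R$ would produce a bound with $(1-1/R)^{-3}$ and no $r$ at all---a different (and not the stated) inequality.
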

\begin{proof}
Let $\Gamma=\partial \bbE_r$. Then we have, by taking norms in~\eqref{eq:Em_integral},
\begin{equation}\label{eq:proof_cauchy1}
\|E_m(f)\| \leq \frac{1}{2 \pi} \int_\Gamma |f(z)| \big( \|\vx(z)\|\|\vy(z)-\vy_m(z)\| + \|\vy_m(z)\|\|\vx(z) - \vx_m(z)\| \big) |\d z|.
\end{equation}
Because of the polynomial exactness of the Krylov approximation~\eqref{eq:krylov_update_nonhermitian} proven in Theorem~\ref{the:polynomial_exactness}, the error of our Krylov approximation is the same for $f$ and $f-p$, for any $p \in \Pi_m$. This allows us to conclude from~\eqref{eq:proof_cauchy1} that
\begin{equation}\label{eq:proof_cauchy2}
\|E_m(f)\| \leq \frac{\|f-p\|_\Gamma}{2 \pi} \int_\Gamma \big( \|\vx(z)\|\|\vy(z)-\vy_m(z)\| + \|\vy_m(z)\|\|\vx(z) - \vx_m(z)\| \big) |\d z|.
\end{equation}
Applying the substitution $z=\psi(u)$ with $|u|=r$, we obtain from Lemma~\ref{lem:norms} the upper bound 
\begin{eqnarray}
&&\int\limits_{|u|=r}\!\!\!\!\big( \|\vx(\psi(u))\|\|\vy(\psi(u))-\vy_m(\psi(u))\| + \|\vy_m(\psi(u))\|\|\vx(\psi(u)) - \vx_m(\psi(u))\| \big) \frac{|\psi^\prime(u)\d u|}{2\pi}\nonumber\\
&&\leq \|\vb\|\|\vc\|\int_{|u|=r} \frac{8 r^{2-m}}{\psi'(\infty)(r^2-1)(r-1)^2} \frac{|\d u|}{2\pi} \leq \frac{8 r^{-m-1}}{\psi'(\infty)(1-1/r)^3} \|\vb\|\|\vc\|\label{eq:proof_cauchy_integral_bound}
\end{eqnarray}
for the integral in~\eqref{eq:proof_cauchy2}. 
Further, we can use a partial Faber sum (cf.~\cite[Remark 3.3]{BeckermannReichel2009} with $\rho = R/r$) to find the Bernstein-type estimate for the best polynomial approximation on the convex set $\bbE_r$
\begin{equation}\label{eq:proof_cauchy_approx_bound}
\min_{p\in \Pi_m} \| f - p \|_{\Gamma} 
         \leq \left(\frac{r}{R}\right)^m \frac{2r}{R-r} \, \max_{z \in \Gamma_R} |f(z)|.
\end{equation} 
Inserting~\eqref{eq:proof_cauchy_integral_bound} and~\eqref{eq:proof_cauchy_approx_bound} into~\eqref{eq:proof_cauchy2} then yields the desired result.
\end{proof}

Let us illustrate Theorem~\ref{the:cauchy} for some particular sets $\mathbb E$ and the particular case of the exponential function $f(z)=\exp(z)$. We suppose in the following that $\mathbb E$ is convex and symmetric with respect to the real axis. Then $\psi(R)\in \mathbb R$ is the element in $\mathbb E_R$ with the largest real part, and hence
$$
        R^{-m-1} \max_{z\in \Gamma_R} | f(z) | = e^{\psi(R)}/{R^{m+1}} .
$$
Our aim will be to choose $R>1$ to make the above right-hand side small. This task has been (implicitly) accomplished by Hochbruck and Lubich \cite[Section~3]{HochbruckLubich1997} for various families of sets like real and purely imaginary intervals, disks, and wedge-shaped sets with a corner at $\psi(1)$, see also the analysis of \cite[Section~4]{BeckermannReichel2009} where also general convex domains with corners have been considered. 

Our upper bounds will be stated in terms of $\psi(1)$, the element of $\mathbb E$ of largest real part, and in terms of $\rho = \psi'(\infty)$, the logarithmic capacity of $\mathbb E$ (which is increasing as the set becomes larger). For the four families of sets mentioned above, explicit formulas are known for $\psi(R)$ in terms of $\psi(1),\rho$ and $R$.

We start with some negative result. By convexity, the function $r \mapsto r\psi'(r)$ is known to be increasing. Provided that
$      \psi'(1) \geq m+1   $, elementary calculus implies that $R\mapsto 
e^{\psi(R)}/{R^{m+1}}$ is increasing for $R\in [1,\infty)$, and thus Theorem~\ref{the:cauchy} is not useful in this case. However, for $m+1 \gg 2 \rho \geq \psi'(1)$, we get the following result of superlinear convergence. 
\begin{corollary}\label{cor:exp_superlinear}
    Let $\mathbb E$ be convex and symmetric with respect to the real axis.
    Then, under the conditions of Theorem~\ref{the:cauchy} and $f(z)=\exp(z)$, we have for $m+1 \geq e \rho$ 
    \begin{equation}\label{eq:bound_exp_superlinear}
        \| E_m(f) \| \leq \frac{672}{\rho} e^{\psi(1)} \Big( \frac{\rho e}{m+1} \Bigr)^{m+1}
        \, \| \vb \| \, \| \vc \|. 
    \end{equation}
\end{corollary}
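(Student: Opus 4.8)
The plan is to optimize the free parameter $R$ in Theorem~\ref{the:cauchy} for the specific case $f(z) = \exp(z)$ on a convex set $\mathbb E$ that is symmetric about the real axis. As already observed just before the corollary, for such a set the quantity $\max_{z \in \Gamma_R}|f(z)|$ equals $e^{\psi(R)}$, attained at the rightmost point $\psi(R) \in \mathbb R$ of the level set $\mathbb E_R$. So the bound from Theorem~\ref{the:cauchy} becomes, after also fixing an intermediate radius $r$ (say $r = 2$, or $r$ chosen as some convenient fixed multiple), a constant times $\rho^{-1} e^{\psi(R)} R^{-m-1} \|\vb\|\|\vc\|$, and the task reduces to bounding $e^{\psi(R)} R^{-m-1}$.

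The key step is to control $\psi(R)$ from above in terms of $\psi(1)$ and $\rho = \psi'(\infty)$. Since $\mathbb E$ is convex, the Grötzsch--Golusin inequality~\eqref{eq:golusin} gives $|\psi'(u)/\psi'(\infty) - 1| \le 1/|u|^2$, hence $\psi'(t) \le \rho(1 + 1/t^2)$ for real $t > 1$. Integrating from $1$ to $R$ along the real axis yields
$$
\psi(R) \le \psi(1) + \rho \int_1^R \Bigl(1 + \frac{1}{t^2}\Bigr)\,\d t = \psi(1) + \rho\Bigl(R - 1 + 1 - \frac1R\Bigr) = \psi(1) + \rho\Bigl(R - \frac1R\Bigr) \le \psi(1) + \rho R.
$$
Therefore $e^{\psi(R)} R^{-m-1} \le e^{\psi(1)} e^{\rho R} R^{-m-1}$. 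Now I minimize $g(R) := e^{\rho R} R^{-m-1}$ over $R > 1$: setting $g'(R) = 0$ gives the optimal choice $R_\ast = (m+1)/\rho$, which is admissible (i.e.\ $> r$) precisely under a hypothesis like $m + 1 \ge e\rho$ (with room to spare for the fixed $r$). Plugging $R_\ast$ back in, $g(R_\ast) = e^{m+1}\bigl(\rho/(m+1)\bigr)^{m+1} = \bigl(\rho e/(m+1)\bigr)^{m+1}$, which is exactly the superlinear factor appearing in~\eqref{eq:bound_exp_superlinear}.

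The remaining work is purely a matter of bookkeeping the constant: with $r$ a fixed number (chosen so that $1 < r < R_\ast$ is guaranteed by $m+1 \ge e\rho$) and $R = R_\ast$, the prefactor $\frac{16/\psi'(\infty)}{(1-r/R)(1-1/r)^3}$ from Theorem~\ref{the:cauchy} is at most $\frac{C}{\rho}$ for an absolute constant $C$; tracking through a concrete choice of $r$ and using $r/R \le r\rho/(m+1) \le $ something bounded away from $1$ (again from $m+1 \ge e\rho$, perhaps after slightly strengthening to a larger multiple, or by noting $r/R_\ast = r\rho/(m+1)$ and $\rho/(m+1)\le 1/e$) yields the explicit constant $672$. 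I expect the main obstacle — really the only nontrivial point — to be arranging the constant cleanly: one must pick $r$ and possibly a mild numerical slack in the hypothesis $m+1 \ge e\rho$ so that both $R_\ast > r$ and the geometric factors $(1 - r/R_\ast)^{-1}$, $(1-1/r)^{-3}$ stay controlled, all while keeping the final constant as small as $672$. This is elementary calculus but requires care; everything else follows directly from Theorem~\ref{the:cauchy} and the convexity inequality~\eqref{eq:golusin}.
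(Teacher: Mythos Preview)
Your proposal is correct and follows essentially the same route as the paper: integrate the Gr\"otzsch--Golusin inequality to bound $\psi(R)$, set $R=(m+1)/\rho$, and then control the prefactor from Theorem~\ref{the:cauchy}. The one detail you leave vague is the choice of $r$: rather than a fixed value, the paper takes $r=\sqrt{R}$, which collapses $(1-r/R)(1-1/r)^3$ into $(1-1/\sqrt{R})^4$ and, via $R\ge e$, gives $(1-e^{-1/2})^{-4}\le 42$ and hence $16\cdot 42=672$.
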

\begin{proof}
  From the Gr\"otzsch and Golusin inequality \eqref{eq:golusin}, we find for $1 \leq r \leq R$
	\begin{equation}\label{eq:golusin_r}
	\psi^\prime(r) - \rho\Big(1 + \frac{1}{r^2}\Big) \leq 0, \text{ where } \rho = \psi^\prime(\infty).
	\end{equation}
	Integrating~\eqref{eq:golusin_r} from $1$ to $R$ then yields
   $$
      \psi(R) \leq \psi(1) + \rho \Big(R-\frac{1}{R} \Big).
  $$
  By arguing as in the proof of~\cite[Theorem~4]{HochbruckLubich1997} we find for $R=\frac{m+1}{\rho}$ that $e^{\psi(R)}/R^{m+1}\leq e^{\psi(1)}(\frac{e\rho}{m+1})^{m+1}$. 
  In addition, since $(1-1/\sqrt{R})^{-4} \le (1-e^{-1/2})^{-4} \le 42$, we conclude that 
  $$
       \frac{16/\rho}{(1-1/\sqrt{R})^4} \frac{e^{\psi(R)}}{R^{m+1}}
       \leq \frac{672}{\rho} e^{\psi(1)} \Big( \frac{\rho e}{m+1} \Bigr)^{m+1},
  $$
  and our claim follows from Theorem~\ref{the:cauchy} with $r=\sqrt{R}$.
\end{proof}

\begin{example}\label{ex:exp_herm}
\begin{figure}
\begin{center}
\includegraphics[width=.7\textwidth]{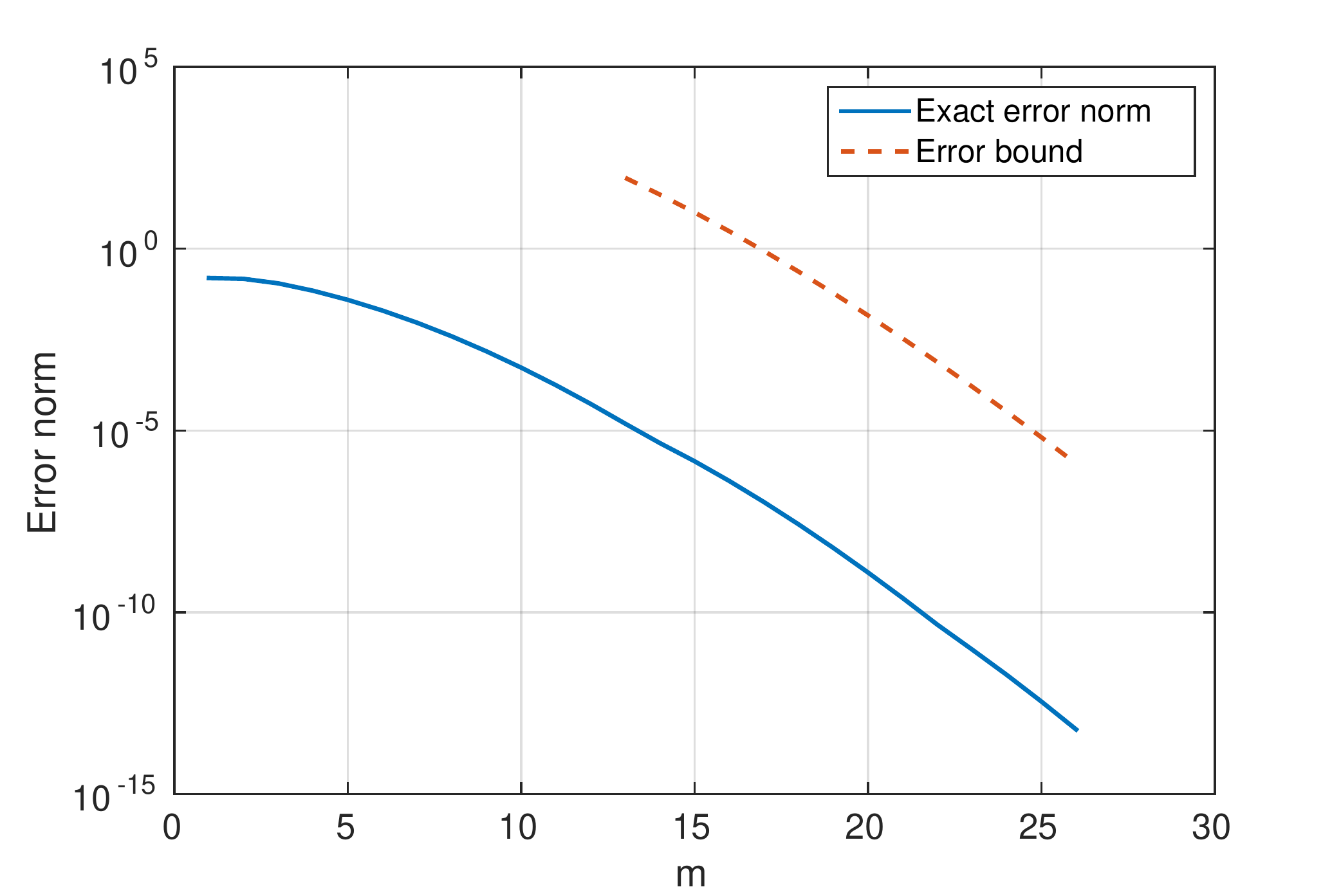}
\end{center}
\caption{Exact error norm and error bound~\eqref{eq:bound_exp_superlinear} for the update of the matrix exponential described in Example~\ref{ex:exp_herm}.\label{fig:exp_herm}}
\end{figure}
We illustrate the result of Corollary~\ref{cor:exp_superlinear} by a simple numerical experiment. We choose $A \in \C^{100 \times 100}$ as a diagonal matrix with eigenvalues equidistantly spaced in $[-20,0]$ and $\vb$ as a random vector of unit norm, resulting in $\spec(A-\vb\vb^\ast) \subseteq [-20.2,0] =: \bbE$. We apply Algorithm~\ref{alg:krylovhermitian} (modified to account for the update $-\vb\vb^\ast$) for approximating $\exp(A-\vb\vb^\ast)-\exp(A)$ and report the resulting convergence curve, together with the error bound~\eqref{eq:bound_exp_superlinear} in Figure~\ref{fig:exp_herm}. We note that the convergence rate is predicted very accurately, but that the magnitude of the error is severly overestimated due to the large constant in~\eqref{eq:bound_exp_superlinear}, which we expect to not be optimal. We thus expect that in general the convergence slope of our bound will be quite accurate, while there is a (large) constant distance between the actual convergence curve and the error bound, something which happens quite frequently for bounds based on the field of values. For practical purposes, we thus suggest to ignore the constant in~\eqref{eq:bound_exp_superlinear}.

We remark that by combining the result of Theorem~\ref{the:convergence_polynomial_rational} with the result of~\cite[Corollary 4.1]{BeckermannReichel2009} on polynomial approximation of the exponential function, we find a bound that predicts essentially the same convergence rate as the bound~\eqref{eq:bound_exp_superlinear}, albeit with a much smaller constant. The real use of the integral representation is thus in the non-Hermitian case, where it allows to circumvent the reliance on $\calW(\calA)$. \hfill $\diamond$
\end{example}

In view of known results for disks (see~\cite[Example after Theorem~5]{HochbruckLubich1997}), we do not expect $E_{m}(f)$ to be small in general for $m+1<\psi'(1)$.
However, in the case of a corner at $\psi(1)$, the right-most element of $\mathbb E$, we have a different regime of convergence for $m+1 \ll \rho$. Here we consider only the wedge-like set $\mathbb E=\mathbb E(\alpha,\rho)$ with
\begin{equation}\label{eq:wedge}
    \psi(w) = \psi(1) + \rho w \Big(1-\frac{1}{w}\Big)^\alpha, \quad 1<\alpha \leq 2,
\end{equation}
having an outer angle of $\alpha\pi$ at $\psi(1)$, by slightly improving \cite[Theorem~6]{HochbruckLubich1997} (though following \cite[Corollay~4.2]{BeckermannReichel2009} one could include more general $\mathbb E$ having such an angle). 
Notice that $\mathbb E(2,\rho)=[\psi(1)-4\rho,\psi(1)]$ is a real interval of capacity $\rho$.

\begin{corollary}\label{cor:wedge}
    Consider the wedge-like set $\mathbb E=\mathbb E(\alpha,\rho)$ for $\alpha\in (1,2]$ and $\rho>0$.
    Then, under the conditions of Theorem~\ref{the:cauchy} and $f(z)=\exp(z)$, we have for $m+1-4/\alpha\in [\alpha \rho^{1/\alpha},\alpha \rho]$
    \begin{equation}\label{eq:bound_exp_wedge}
        \| E_m(f) \| \leq \frac{(4\rho^{1/\alpha})^4}{\rho} 
        \exp \Bigl( \psi(1) - (\alpha-1) \Bigl( \frac{m+1-\frac{4}{\alpha}}{ \alpha \rho^{1/\alpha}} \Bigr)^{\frac{\alpha}{\alpha-1}}  \Bigr)\, \| \vb \| \, \| \vc \|.
    \end{equation}
\end{corollary}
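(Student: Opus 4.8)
The plan is to mimic closely the proof of Corollary~\ref{cor:exp_superlinear}, but this time optimizing the free parameter $R$ in Theorem~\ref{the:cauchy} by exploiting the corner of $\mathbb E(\alpha,\rho)$ at $\psi(1)$, which allows $\psi(R)$ to grow much more slowly than in the smooth case. First I would obtain a sharp estimate for $\psi(R)-\psi(1)$ when $R>1$ is close to $1$: from the explicit formula~\eqref{eq:wedge} one has $\psi(R)-\psi(1) = \rho R(1-1/R)^\alpha = \rho (R-1)^\alpha R^{1-\alpha}$, so for moderate $R$ this is of order $\rho (R-1)^\alpha$, vanishing to order $\alpha>1$ as $R\to 1^+$. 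Hence $e^{\psi(R)}/R^{m+1} = \exp\bigl(\psi(1) + \rho(R-1)^\alpha R^{1-\alpha} - (m+1)\log R\bigr)$, and the competition is now between a term growing like $(R-1)^\alpha$ and a term $-(m+1)\log R \approx -(m+1)(R-1)$ for $R$ near $1$.

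Next I would carry out the minimization over $R$ of $\varphi(R) := \rho(R-1)^\alpha R^{1-\alpha} - (m+1)\log R$ (or a convenient upper bound of it). Writing $t = R-1 \ge 0$ and using $\log R \ge t - t^2/2 \ge \frac{t}{2}$ for $t\le 1$ — or more cautiously, following the bookkeeping of \cite[Theorem~6]{HochbruckLubich1997} — the leading balance is $\rho t^\alpha \sim (m+1) t$, giving the optimal $t \sim \bigl((m+1)/(\alpha\rho)\bigr)^{1/(\alpha-1)}$ up to the shift by $4/\alpha$ coming from the choice $r=\sqrt{R}$ (equivalently $|u|=r$ with $r^2=R$) that was already used in Corollary~\ref{cor:exp_superlinear}; the exponent $\frac{1}{\alpha-1}$ and the resulting decay rate $\exp\bigl(-(\alpha-1)\bigl(\frac{m+1-4/\alpha}{\alpha\rho^{1/\alpha}}\bigr)^{\alpha/(\alpha-1)}\bigr)$ emerge from substituting this $t$ back into $\varphi$. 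The restriction $m+1-4/\alpha\in[\alpha\rho^{1/\alpha},\alpha\rho]$ is exactly the range in which this optimal $R$ lies in $(1,\infty)$ and stays bounded away from the regime where Theorem~\ref{the:cauchy} degenerates (upper end) or where one is already in the superlinear regime of Corollary~\ref{cor:exp_superlinear} (lower end), so one should check these two endpoint conditions explicitly. Finally I would bound the remaining algebraic prefactor $\frac{16/\psi'(\infty)}{(1-r/R)(1-1/r)^3}$ from Theorem~\ref{the:cauchy}: with $r=\sqrt{R}$ this is $\frac{16/\rho}{(1-1/\sqrt R)^4}$, and since $\sqrt R - 1 = \sqrt{1+t}-1 \ge t/3$ for $t\le 1$, one gets $(1-1/\sqrt R)^{-4} = \bigl(\frac{\sqrt R}{\sqrt R-1}\bigr)^4 \le (3\sqrt R/t)^4$, which after inserting $t \sim ((m+1)/(\alpha\rho))^{1/(\alpha-1)}$ and $\rho^{1/\alpha}$-type simplifications is absorbed into the stated constant $(4\rho^{1/\alpha})^4/\rho$.

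The main obstacle I expect is the careful bookkeeping in the parameter optimization: keeping the shift $4/\alpha$, the square-root choice $r=\sqrt R$, and the passage between $\log R$, $R-1$, and $(R-1)^\alpha R^{1-\alpha}$ all consistent so that the final exponent is exactly $\frac{\alpha}{\alpha-1}$ with the clean base $\frac{m+1-4/\alpha}{\alpha\rho^{1/\alpha}}$, rather than merely of the right order. In particular one must verify that the chosen $R$ satisfies $1<r<R$ (so Theorem~\ref{the:cauchy} applies) and that the crude inequalities $\log R \ge \frac{t}{2}$, $\sqrt{1+t}-1\ge t/3$ used to simplify are valid on the relevant $t$-range $t\le 1$, which is guaranteed precisely by the hypothesis $m+1-4/\alpha\le\alpha\rho$. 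Everything else is the same machinery as in Corollary~\ref{cor:exp_superlinear}, and following \cite[Theorem~6]{HochbruckLubich1997} while replacing their polynomial-approximation input by our Theorem~\ref{the:cauchy} (with the slightly better constants we already have) should yield~\eqref{eq:bound_exp_wedge} directly.
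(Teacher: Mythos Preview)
Your overall strategy---apply Theorem~\ref{the:cauchy} with $r=\sqrt{R}$, use the explicit corner formula~\eqref{eq:wedge}, then optimize over $R$---is exactly what the paper does. The one device you are missing, and which resolves precisely the bookkeeping worry you raise, is the paper's change of variable
\[
   u=u(R)=\Bigl(\tfrac{\psi(R)-\psi(1)}{\rho}\Bigr)^{1/\alpha}=R^{1/\alpha}-R^{-1+1/\alpha}=R^{1/\alpha}(1-1/R).
\]
This substitution makes three things clean simultaneously: (i) $\psi(R)-\psi(1)=\rho u^\alpha$ exactly; (ii) since $1-1/R=uR^{-1/\alpha}$, the prefactor $(1-1/\sqrt R)^{-4}\le 16(1-1/R)^{-4}=16\,u^{-4}R^{4/\alpha}$, and the extra $R^{4/\alpha}$ merges with $R^{-(m+1)}$ to produce the shift $m+1-4/\alpha$; (iii) instead of estimating $\log R$ via $t-t^2/2$, one uses $\log(1/R)\le 1/R-1=-uR^{-1/\alpha}$ together with $R^{1/\alpha}\le 2$ (which follows from $u\le 1$). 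The optimization is then over $u$, giving $u^{\alpha-1}=\tfrac{m+1-4/\alpha}{2\alpha\rho}$, and the hypothesis $m+1-4/\alpha\in[\alpha\rho^{1/\alpha},\alpha\rho]$ is exactly what keeps this $u$ in the admissible range.

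Your $t=R-1$ parametrization with $\log R\ge t/2$ and $\sqrt{1+t}-1\ge t/3$ would yield a bound of the same shape, but the algebraic prefactor then contributes $R^{2}$ rather than $R^{4/\alpha}$, so the shift would come out as $2$ instead of $4/\alpha$ and the constants would differ; to land exactly on~\eqref{eq:bound_exp_wedge} you need the $u$-substitution.
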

\begin{proof}
    We consider for $R > 1$ the strictly increasing function
    $$
        u = u(R) = \Bigl( \frac{\psi(R)-\psi(1)}{\rho} \Bigr)^{1/\alpha}
                 = R^{1/\alpha}- R^{-1+1/\alpha} ,
    $$
    and notice that $u\leq 1$ implies that $R^{1/\alpha} = u + R^{-1+1/\alpha}\leq u+1\leq 2$. Hence, provided that $u \in (0,1)$,
    \begin{eqnarray*}
          \frac{e^{\psi(R)-\psi(1)}}{R^{m+1}(1-1/\sqrt R)^4}
          &\leq &
          \frac{16 e^{\psi(R)-\psi(1)}}{R^{m+1}(1-1/R)^4}
          \\
          &\leq& \frac{16}{u^4} \exp \Big( \rho u^\alpha + \Big(m+1-\frac{4}{\alpha}\Big) \log\Big(\frac{1}{R}\Big) \Big)
          \\
          &\leq& \frac{16}{u^4} \exp \Big( \rho u^\alpha + \Big(m+1-\frac{4}{\alpha}\Big) \Big(\frac{1}{R}-1\Big) \Big)
          \\
          &= & \frac{16}{u^4} \exp \Bigl( \rho u^\alpha - \Big(m+1-\frac{4}{\alpha}\Big) \frac{u}{R^{1/\alpha}} \Bigr)
          \\
          &\leq& \frac{16}{u^4} \exp \Bigl( \rho u^\alpha - \Big(m+1-\frac{4}{\alpha}\Big) \frac{u}{2} \Bigr).
    \end{eqnarray*}
    The argument of the exponential function on the right takes its minimum at 
    $$
           u^{\alpha-1} = \frac{m+1-4/\alpha}{2\alpha\rho} \in [\rho^{-\frac{\alpha-1}{\alpha}} , 1]
    $$
    by our assumption on $m$.
    Hence
    \begin{eqnarray*}
          \frac{e^{\psi(R)-\psi(1)}}{R^{m+1}(1-1/\sqrt R)^4}
          &\leq& \frac{16}{u^4} \exp \Bigl( - (\alpha-1) \Big(m+1-\frac{4}{\alpha}\Big) \frac{u}{2\alpha} \Bigr)
          \\
          &\leq& 16 \rho^{4/\alpha}  \exp \Bigl( - (\alpha-1) \Bigl( \frac{m+1-\frac{4}{\alpha}}{2\alpha \rho^{1/\alpha}} \Bigr)^{\frac{\alpha}{\alpha-1}}  \Bigr),
    \end{eqnarray*}
  and our claim follows from Theorem~\ref{the:cauchy}.
\end{proof}

\begin{example} \label{ex:exp_wedge}
\begin{figure}
\begin{center}
\includegraphics[width=.7\textwidth]{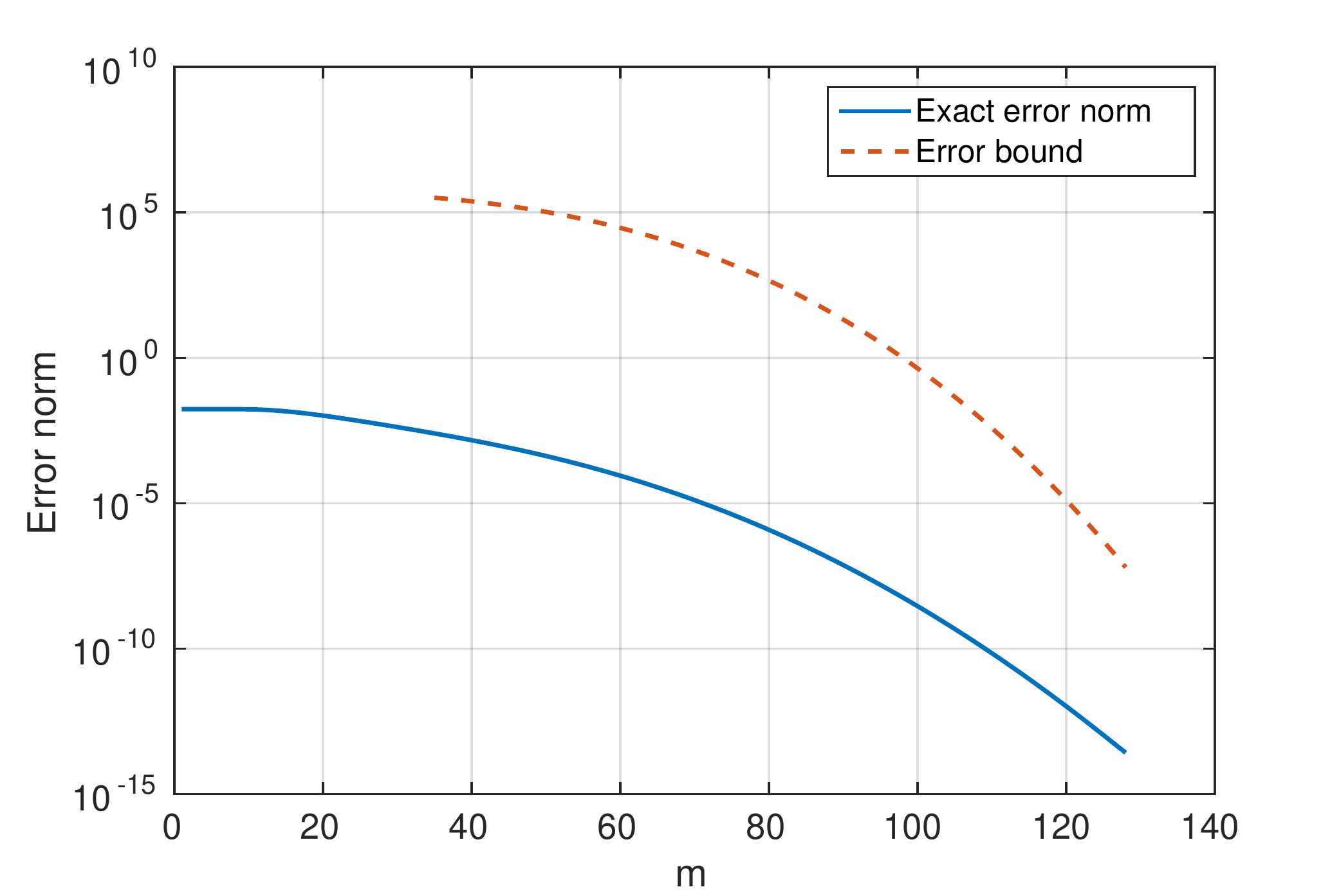}
\end{center}
\caption{Exact error norm and error bound~\eqref{eq:bound_exp_wedge} for the update of the matrix exponential described in Example~\ref{ex:exp_wedge}.\label{fig:exp_wedge} }
\end{figure}
We illustrate the result of Corollary~\ref{cor:wedge} for a diagonal matrix $A \in \C^{1000 \times 1000}$ with eigenvalues in the wedge-like set~\eqref{eq:wedge} with $\psi(1) = 0$, $\alpha = 3/2$ and $\rho = 100$. The vector $\vb$ is again chosen as a random vector of unit norm, which results in $\calW(A-\vb\vb^\ast) \subseteq \bbE$ where $\bbE$ is a wedge-like set corresponding to $\psi(1) = 0$, $\alpha = 1.5$ and $\rho = 101$. We apply Algorithm~\ref{alg:krylovnonhermitian} (modified to account for the update $-\vb\vb^\ast)$ for approximating $\exp(A-\vb\vb^\ast)-\exp(A)$ and report the resulting convergence curve together with the error bound~\eqref{eq:bound_exp_wedge} in Figure~\ref{fig:exp_wedge}. We again observe that the error norm is overestimated by a few orders of magnitude, while the convergence slope is predicted quite well (although not as accurately as in the Hermitian case in Example~\ref{ex:exp_herm}).\hfill $\diamond$
\end{example}

In the particular case of symmetric data, $A=A^*$ and $\vc=\vb$, the smallest set containing both $\calW(A)$ and $\calW(A+\vb\vb^\ast)$ is the interval $\bbE=[\lambda_{\min}(A),\lambda_{\max}(A+\vb\vb^*)]$, where $\lambda_{\min}(\cdot)$ and $\lambda_{\max}(\cdot)$ denote the smallest and largest eigenvalues of a Hermitian matrix. For the exponential function a refined analysis would be possible, using the fact that the coefficients of the Chebyshev series of $f(z)=\exp(z)$ are explicitly known in terms of Bessel functions. We believe, however, that we do not gain much qualitatively compared to the results of the two preceding corollaries with the choices
$$    \psi(1) = \lambda_{\max}(A+\vb\vb^*), \quad 
    \rho = \frac{\lambda_{\max}(A+\vb\vb^*)-\lambda_{\min}(A)}{4}, \quad \alpha=2.$$

\subsection{Convergence results for Markov functions}\label{subsec:markov}
The approach outlined above extends to other integral representations of $f$. In particular, a \emph{Markov function} can be written as
\begin{equation}\label{eq:markov}
f(x) = \int_\alpha^\beta \frac{\d\mu(z)}{x-z},
\end{equation}
where $\mu$ is a positive measure with support in the interval $[\alpha,\beta]$ with $-\infty \leq \alpha < \beta < \infty$. Any such Markov function is analytic in $\barC \setminus [\alpha,\beta]$. Examples of Markov functions include inverse fractional powers
$$
    f(z)=z^{-\gamma} = \frac{\sin(\gamma\pi)}{\pi} \int_{-\infty}^0 \frac{(-x)^{-\gamma}\d x}{z-x} 
$$
for $\gamma \in (0,1)$ or the logarithm
$$
    f(z)=\frac{1}{z}\log(1+z) = \int_{-\infty}^{-1} \frac{(-1/x)\d x}{z-x},
$$
see, e.g.,~\cite{BergForst1975,Henrici1977} for more details and other examples of Markov functions.

Combining the result of Lemma~\ref{lem:norms} with the integral representation of the error~\eqref{eq:Em_integral} now allows to obtain convergence estimates.

\begin{theorem}\label{the:markov}
Let $\bbE$ be symmetric with respect to the real axis and let $\omega\in \bbE \cap \R$ denote the element of $\bbE$ with smallest real part. For a Markov function $f$ with $\alpha < \beta <\omega$ in~\eqref{eq:markov}, it holds that
$$\| E_m(f) \| \leq \frac{8 \, | f'(\omega) | }{|\phi(\beta)|^m} \, \| \vb \| \, \| \vc \|,$$    
where $\phi$ is mapping conformally from $\barC \setminus\bbE$ onto $\barC \setminus \bbD$.
\end{theorem}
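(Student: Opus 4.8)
The plan is to start from the integral representation of the error $E_m(f)$ and specialize the Cauchy-formula machinery of Section~\ref{subsec:cauchy} to the Markov kernel. Writing $f(x)=\int_\alpha^\beta \frac{\d\mu(z)}{x-z}$, I would express $f(A+\vb\vc^\ast)-f(A)$ and its Krylov approximation $U_mX_m(f)V_m^\ast$ by integrating the corresponding rank-one update identities against $\mu$; concretely, for each $z\in[\alpha,\beta]$ the function $g_z(x)=\frac{1}{x-z}$ is a shifted resolvent, so the already-established formulas~\eqref{eq:integral_compact} and~\eqref{eq:krylov_projection_integral} (or, more directly, the error expression~\eqref{eq:Em_integral}) give $E_m(g_z)$ in terms of the shifted systems $\vx(z),\vx_m(z),\vy(z),\vy_m(z)$ from~\eqref{eq:shifted_systems}. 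Then $E_m(f)=\int_\alpha^\beta E_m(g_z)\,\d\mu(z)$ by linearity, and I would bound $\|E_m(f)\|\le\int_\alpha^\beta\|E_m(g_z)\|\,\d\mu(z)$.

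The next step is to bound $\|E_m(g_z)\|$ for a fixed pole $z\in[\alpha,\beta]$ using Lemma~\ref{lem:norms}. Since $g_z$ is analytic outside the point $z\notin\bbE$, I can take the contour $\Gamma$ in~\eqref{eq:Em_integral} to be a small circle around $z$; the residue there picks up the relevant rank-one term, giving something like $\|E_m(g_z)\|\le \|\vx(z)\|\,\|\vy(z)-\vy_m(z)\| + \|\vy_m(z)\|\,\|\vx(z)-\vx_m(z)\|$. Feeding in the bounds of Lemma~\ref{lem:norms}, with $z=\psi(u)$, the right-hand side is controlled by $\|\vb\|\,\|\vc\|$ times $\frac{8|u|^{-m}}{\dist(z,\bbE)^2}$ (up to constants). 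The key quantity is then $|u|=|\psi^{-1}(z)|=|\phi(z)|$, and since $z\in[\alpha,\beta]$ with $\alpha<\beta<\omega$ and $\phi$ maps $\barC\setminus\bbE$ onto the exterior of the disk with $\phi$ monotone on the real ray to the left of $\bbE$, we have $|\phi(z)|\ge|\phi(\beta)|>1$ for all $z$ in the support of $\mu$. Thus $|u|^{-m}\le|\phi(\beta)|^{-m}$ uniformly in $z$.

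Finally I would recombine: $\|E_m(f)\|\le\|\vb\|\,\|\vc\|\,|\phi(\beta)|^{-m}\int_\alpha^\beta\frac{c}{\dist(z,\bbE)^2}\,\d\mu(z)$ for an absolute constant $c$, and recognize the remaining integral as (a constant times) $|f'(\omega)|$. Indeed, differentiating~\eqref{eq:markov} gives $f'(x)=\int_\alpha^\beta\frac{\d\mu(z)}{(x-z)^2}$, so $|f'(\omega)|=\int_\alpha^\beta\frac{\d\mu(z)}{(\omega-z)^2}$, and since $\omega$ is the point of $\bbE$ closest to $[\alpha,\beta]$ on the real axis one has $\dist(z,\bbE)\ge$ (something comparable to) $|\omega-z|$ for $z\le\beta$; in the symmetric convex situation $\dist(z,\bbE)=\omega-z$ exactly when $z<\omega$. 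Tracking the constants carefully — being a bit generous in the Lemma~\ref{lem:norms} step but using the cleaner $\frac{1}{\dist(z,\bbE)}$ form of~\eqref{eq:norm} for $\|\vx(z)\|$ and $\|\vy_m(z)\|$ and the $\frac{4|u|^{-m}}{\dist(z,\bbE)}$ form of~\eqref{eq:error} for the error terms — yields the factor $8$.

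The main obstacle I anticipate is the bookkeeping that turns $\dist(z,\bbE)$ into $(\omega-z)$ cleanly enough to produce exactly $f'(\omega)$ with the stated constant: one must verify that for $z$ in the support of $\mu$ the distance from $z$ to $\bbE$ really is $\omega-z$ (this uses that $\bbE$ is symmetric about the real axis and that $\omega$ is its leftmost real point, plus that $[\alpha,\beta]$ lies to the left of $\omega$), and one must also check that nothing in the contour-deformation argument is spoiled by $\mu$ being merely a positive measure rather than a density (Fubini to interchange $\int\d\mu$ with the contour integral is routine but should be noted). The rest is the standard Faber/Crouzeix estimates already packaged in Lemma~\ref{lem:norms}.
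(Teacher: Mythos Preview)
Your proposal is correct and follows essentially the same route as the paper. The paper begins from the contour representation~\eqref{eq:Em_integral} on $\partial\bbE_r$, inserts the Markov representation of $f$, and uses Fubini together with the residue theorem to collapse the contour integral to $\int_\alpha^\beta\bigl[\vx(t)(\vy(t)-\vy_m(t))^\ast+(\vx(t)-\vx_m(t))\vy_m(t)^\ast\bigr]\d\mu(t)$; this is exactly your decomposition $E_m(f)=\int_\alpha^\beta E_m(g_z)\,\d\mu(z)$ with $E_m(g_z)$ evaluated by residue. From there both arguments are identical: apply the $1/\dist(z,\bbE)$ bound from~\eqref{eq:norm} and the $4|u|^{-m}/\dist(z,\bbE)$ bound from~\eqref{eq:error}, use monotonicity of $1/|\phi|$ on $[\alpha,\beta]$ to extract $|\phi(\beta)|^{-m}$, observe $\dist(z,\bbE)=\omega-z$ (convexity plus symmetry), and identify $\int_\alpha^\beta(\omega-z)^{-2}\d\mu(z)=|f'(\omega)|$.
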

\begin{proof}
Let us first show that we may take $\Gamma=[\alpha,\beta]$ in equation \eqref{eq:Em_integral}. Take $r>1$ such that $f$ is analytic in $\mathbb E_r$, that is, $[\alpha,\beta]\cap \mathbb E_r=\emptyset$. Since all expressions
$\vx(z),\vy(z)^*,\vx_m(z),\vy_m(z)^*$ are analytic outside $\mathbb E$ and decay like $1/z$ at $\infty$, we get by exchanging integration and using the Cauchy residual theorem 
\begin{eqnarray*} 
     E_m(f)&=&\frac{1}{2\pi i} \int_{\partial \mathbb E_r} \int_{\alpha}^\beta \frac{\d\mu(t)}{t-z} \big( \vx(z)(\vy(z)-\vy_m(z))^\ast + (\vx(z) - \vx_m(z))\vy_m(z)^\ast \big) \d z
     \\&=& \int_{\alpha}^\beta \big( \vx(t)(\vy(t)-\vy_m(t))^\ast + (\vx(t) - \vx_m(t))\vy_m(t)^\ast \big) \d\mu(t) . 
\end{eqnarray*}
By taking norms, we obtain
\begin{equation}
\|E_m(f)\| \leq \int_\alpha^\beta ( \|\vx(t)\|\|\vy(t)-\vy_m(t)\| + \|\vy_m(t)\| \|\vx(t) - \vx_m(t))\|) \dmu \label{eq:Em_norm}
\end{equation}
Applying the first inequality in~\eqref{eq:norm} and the second inequality in~\eqref{eq:error} to the individual terms in~\eqref{eq:Em_norm} then yields
\begin{equation}\label{eq:Em_norm2}
\|E_m(f)\| \leq 8\|\vb\|\|\vc\|\int_\alpha^\beta \frac{|u|^{-m}\d\mu(t)}{\dist(t,\bbE)^2}.
\end{equation}
Using the fact that $t = \psi(u)$, i.e., $u = \phi(t)$ and the fact that $\dist(t,\bbE) = \omega - t$ for $t\in [\alpha,\beta]$, the inequality~\eqref{eq:Em_norm2} yields
\begin{equation*}
\|E_m(f)\| \leq 8\|\vb\|\|\vc\|\int_\alpha^\beta \frac{1}{|\phi(t)|^{m}}\frac{\d\mu(t)}{(\omega - t)^2}.
\end{equation*}
Since the function $1/|\phi(t)|$ is monotonically increasing on $[\alpha,\beta]$ (see, e.g., the proof of Theorem 6.1 in~\cite{BeckermannReichel2009}), we further have 
\begin{equation}\label{eq:Em_norm4}
\|E_m(f)\| \leq \frac{8\|\vb\|\|\vc\|}{|\phi(\beta)|^m}\int_\alpha^\beta \frac{\d\mu(t)}{(\omega - t)^2}.
\end{equation}
The result of the theorem now follows by noting that the integral on the right-hand side of~\eqref{eq:Em_norm4} is exactly $|f^\prime(\omega)|$, see, e.g.,~\cite{AlzerBerg2002}.
\end{proof}

We now show how the bound from Theorem~\ref{the:markov} simplifies when $\bbE$ is an ellipse on the right of $[\alpha,\beta]$ (or, as a special case, an interval). This gives a more explicit idea of the convergence behavior one can expect.

\begin{corollary}
Let $f$ be a Markov function~\eqref{eq:markov} with $-\infty \leq \alpha < \beta < \infty$, let $\bbE$ be an ellipse
\begin{equation}\label{eq:ellipse}
\bbE = \{ z \in \C : | z - \sigma + \tau | + |z - \sigma - \tau| \leq \tau (\rho + \rho^{-1})\},
\end{equation}
where $\sigma \in \R,\, \tau > 0,\, \rho \geq 1$. Let 
\begin{equation}\label{eq:omega}
\omega := \sigma - \frac{\tau}{2}(\rho+\rho^{-1}).
\end{equation}
If $\beta < \omega$ and $\bbE$ is symmetric to the real axis we have
\begin{equation}\label{eq:convergence_bound_markov_ellipse}
\|E_m(f)\| \leq 8 \, |f'(\omega)| \, \|\vb\|\|\vc\| \left(\rho\cdot\frac{\sqrt{\kappa}-1}{\sqrt{\kappa}+1}\right)^m,\quad \kappa = \frac{|\beta-\sigma|+\tau}{|\beta-\sigma|+\tau}.
\end{equation}
\end{corollary}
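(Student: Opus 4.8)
The plan is to specialize Theorem~\ref{the:markov} to the ellipse~\eqref{eq:ellipse} by computing the conformal map $\phi$ explicitly and evaluating it at the point $\beta$. The general bound from Theorem~\ref{the:markov} already gives
$$\|E_m(f)\| \leq \frac{8\,|f'(\omega)|}{|\phi(\beta)|^m}\,\|\vb\|\,\|\vc\|,$$
where $\omega$ is the leftmost real point of $\bbE$, so the entire task reduces to verifying two things: (i) that the $\omega$ defined in~\eqref{eq:omega} really is the element of $\bbE$ with smallest real part, and (ii) that $1/|\phi(\beta)|$ equals the quantity $\rho^{-1}\cdot\frac{\sqrt\kappa+1}{\sqrt\kappa-1}$ appearing in~\eqref{eq:convergence_bound_markov_ellipse} (up to the obvious typo $\kappa = \frac{|\beta-\sigma|+\tau}{|\beta-\sigma|-\tau}$, which I would silently correct).

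For step~(i): the ellipse~\eqref{eq:ellipse} has center $\sigma$, foci $\sigma\pm\tau$, and semi-axis sum $\tfrac\tau2(\rho+\rho^{-1})$ along the real axis (since $\bbE$ is symmetric about $\R$ and the foci lie on $\R$, the major axis is horizontal when $\rho>1$). Its leftmost point on the real axis is therefore $\sigma - \tfrac\tau2(\rho+\rho^{-1}) = \omega$, and by convexity this is also the point of smallest real part overall. The hypothesis $\beta<\omega$ then places $[\alpha,\beta]$ strictly to the left of $\bbE$, as required by Theorem~\ref{the:markov}, and $\dist(\beta,\bbE) = \omega-\beta$ as used there.

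For step~(ii): the standard conformal map from $\barC\setminus\bbE$ onto $\barC\setminus\bbD$ is obtained by first mapping the ellipse (with foci $\sigma\pm\tau$) to a disk via the Joukowski-type normalization. Concretely, the map $w\mapsto \psi(w) = \sigma + \tfrac\tau2(\rho w + \rho^{-1}w^{-1})$ takes $\{|w|>1\}$ onto $\barC\setminus\bbE$ with $\psi(\infty)=\infty$, so its inverse $\phi$ satisfies $\phi(z) = \rho^{-1}\zeta(z)$ where $\zeta$ is the exterior Joukowski inverse $\zeta \mapsto \tfrac\tau2(\zeta + \zeta^{-1}) = z-\sigma$, i.e. $\zeta(z) = \tfrac{1}{\tau}\bigl((z-\sigma) + \sqrt{(z-\sigma)^2-\tau^2}\bigr)$ with the branch $|\zeta|\ge 1$. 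Evaluating at $z=\beta$, set $s := \sigma-\beta = |\beta-\sigma|>0$ (since $\beta<\omega<\sigma$); then $(z-\sigma)^2-\tau^2 = s^2-\tau^2$, and one computes $|\zeta(\beta)| = \tfrac1\tau\bigl(s + \sqrt{s^2-\tau^2}\bigr) = \tfrac1\tau(\sqrt{s+\tau}\cdot\sqrt{s-\tau} + \ldots)$; writing $\kappa = \tfrac{s+\tau}{s-\tau}$ one gets, after the routine algebraic simplification $\tfrac{s+\sqrt{s^2-\tau^2}}{s-\sqrt{s^2-\tau^2}}\cdot\!$-style manipulation, that $|\zeta(\beta)| = \tfrac{\sqrt\kappa+1}{\sqrt\kappa-1}$. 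Hence $|\phi(\beta)| = \rho^{-1}\tfrac{\sqrt\kappa+1}{\sqrt\kappa-1}$, so $1/|\phi(\beta)|^m = \bigl(\rho\cdot\tfrac{\sqrt\kappa-1}{\sqrt\kappa+1}\bigr)^m$, and substituting into the Theorem~\ref{the:markov} bound yields~\eqref{eq:convergence_bound_markov_ellipse}.

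The only mildly delicate point is bookkeeping the branch of the square root and confirming $|\zeta(\beta)|\ge 1$ (so that $\phi$ indeed maps outside $\bbE$ to outside $\bbD$ and $|\phi(\beta)|\le\rho^{-1}|\zeta(\beta)|$ is actually an equality rather than needing the map to the level curve); this is immediate once $s>\tau$, which follows from $\beta<\omega\le\sigma-\tau$. Everything else is elementary computation with the explicit ellipse parametrization, so I expect no real obstacle — the main thing to get right is the algebraic identity $\tfrac1\tau(s+\sqrt{s^2-\tau^2}) = \tfrac{\sqrt\kappa+1}{\sqrt\kappa-1}$ and a careful statement of which point of $\bbE$ is $\omega$.
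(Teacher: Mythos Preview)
Your proposal is correct and follows essentially the same route as the paper's proof: identify $\omega$ as the leftmost real point of the ellipse, write down the explicit inverse Joukowski map $\phi(z)=\rho^{-1}\bigl(\zeta+\sqrt{\zeta^2-1}\bigr)$ with $\zeta=(z-\sigma)/\tau$, and reduce $1/|\phi(\beta)|$ to the claimed expression in $\kappa$ via elementary algebra. The paper does precisely this (with the same typo in $\kappa$ that you noticed), merely summarizing the algebraic verification as ``straightforward manipulations'' where you spell out the identity $\tfrac{s+\sqrt{s^2-\tau^2}}{\tau}=\tfrac{\sqrt\kappa+1}{\sqrt\kappa-1}$ explicitly.
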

\begin{proof}
Obviously, $\omega$ defined in~\eqref{eq:omega} is the element of smallest real part in $\bbE \cap \R$. The conformal mapping $\phi$ for $\bbE$ is given by the inverse Joukowski mapping
\begin{equation*}
\phi(z) = \rho^{-1} \left(\zeta + \sqrt{\zeta^2-1}\right), \quad \zeta = \frac{z - \sigma}{\tau}.
\end{equation*}
By straight-forward algebraic manipulations, we find that
\begin{equation*}
\frac{1}{|\phi(\beta)|} = \rho \frac{\sqrt{\kappa}-1}{\sqrt{\kappa}+1}
\end{equation*}
with $\kappa$ as defined in~\eqref{eq:convergence_bound_markov_ellipse}. Applying Theorem~\eqref{the:markov} then gives the desired result.
\end{proof}

A further simplification is possible in the Hermitian positive definite case, where the ellipse $\bbE$ is an interval. We assume $\beta \leq 0$ and bound $1/|\phi(\beta)| \leq 1/|\phi(0)|$ in the following result, as this gives a result which closely resembles the classical convergence bound for the conjugate gradient method.

\begin{corollary}\label{cor:markov_Hpd}
Let $f$ be a Markov function~\eqref{eq:markov} with $-\infty \leq \alpha < \beta \leq 0$, let $A$ be Hermitian positive definite and let $\vb = \vc^\ast$. Further, let 
$$\kappa_\ast = \frac{\lmax(A+\vb\vb^\ast)}{\lmin(A)}$$. We then have
\begin{equation}\label{eq:convergence_bound_markov_Hpd}
\|E_m(f)\| \leq 8 \, |f^\prime(0)| \, \|\vb\|^2 \left(\frac{\sqrt{\kappa_\ast}-1}{\sqrt{\kappa_\ast}+1}\right)^m.
\end{equation}
\end{corollary}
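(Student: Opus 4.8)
The plan is to obtain Corollary~\ref{cor:markov_Hpd} as the degenerate, interval-valued specialization of Theorem~\ref{the:markov} (equivalently, of the preceding corollary with $\rho=1$). In the Hermitian setting with $\vc=\vb$, the smallest set containing $\calW(A)\cup\calW(A+\vb\vb^\ast)$ is the real interval $\bbE=[\lmin(A),\lmax(A+\vb\vb^\ast)]$, which is symmetric with respect to the real axis, so Theorem~\ref{the:markov} applies once its hypotheses are checked. Writing $a=\lmin(A)$ and $c=\lmax(A+\vb\vb^\ast)$, the element of $\bbE$ with smallest real part is $\omega=a$, and since $A$ is positive definite we have $a>0\ge\beta>\alpha$, so the requirement $\alpha<\beta<\omega$ holds. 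Theorem~\ref{the:markov} then yields
$$\|E_m(f)\| \le \frac{8\,|f'(a)|}{|\phi(\beta)|^m}\,\|\vb\|^2,$$
and it remains to estimate $|f'(a)|\le|f'(0)|$ and $1/|\phi(\beta)|\le 1/|\phi(0)| = (\sqrt{\kappa_\ast}-1)/(\sqrt{\kappa_\ast}+1)$.

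For the first estimate I would use the integral representation $f'(x)=-\int_\alpha^\beta \d\mu(z)/(x-z)^2$, so that $|f'(x)|=\int_\alpha^\beta \d\mu(z)/(x-z)^2$; since $\operatorname{supp}\mu\subseteq[\alpha,\beta]\subseteq(-\infty,0]$, the map $x\mapsto(x-z)^{-2}$ is nonincreasing on $[0,\infty)$ for each fixed $z\le 0$, hence $|f'|$ is nonincreasing on $[0,\infty)$ and $|f'(a)|\le|f'(0)|$ because $a=\lmin(A)\ge 0$. For the second estimate, I would invoke the fact, already used in the proof of Theorem~\ref{the:markov}, that $1/|\phi(t)|$ increases monotonically as $t$ moves along the real axis toward $\bbE$; since $\beta\le 0<a$, this gives $1/|\phi(\beta)|\le 1/|\phi(0)|$.

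Finally I would evaluate $|\phi(0)|$ explicitly. Viewing $\bbE=[a,c]$ as the degenerate ellipse~\eqref{eq:ellipse} with $\sigma=(a+c)/2$, $\tau=(c-a)/2$, $\rho=1$, the conformal map is the inverse Joukowski map $\phi(z)=\zeta+\sqrt{\zeta^2-1}$ with $\zeta=(z-\sigma)/\tau$, the branch being chosen so that $|\phi|\ge1$ outside $\bbE$. At $z=0$ one gets $\zeta=-(a+c)/(c-a)=-(\kappa_\ast+1)/(\kappa_\ast-1)<-1$, so $|\phi(0)|=|\zeta|+\sqrt{\zeta^2-1}$; a short computation gives $\zeta^2-1=4\kappa_\ast/(\kappa_\ast-1)^2$, whence $1/|\phi(0)|=|\zeta|-\sqrt{\zeta^2-1}=(\kappa_\ast+1-2\sqrt{\kappa_\ast})/(\kappa_\ast-1)=(\sqrt{\kappa_\ast}-1)/(\sqrt{\kappa_\ast}+1)$. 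Substituting this and $|f'(a)|\le|f'(0)|$ into the displayed bound produces~\eqref{eq:convergence_bound_markov_Hpd}.

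I do not anticipate a genuine obstacle here: the argument is essentially bookkeeping on top of Theorem~\ref{the:markov}, and the only real computation is the evaluation of the conformal map at the origin. The one point needing a little care is verifying that the hypotheses of Theorem~\ref{the:markov} (symmetry of $\bbE$, $\beta<\omega$, monotonicity of $1/|\phi|$) are correctly inherited in the degenerate interval case and that the monotonicity facts are applied on the side of $\bbE$ toward which $\beta$ and $0$ lie.
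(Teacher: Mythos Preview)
Your proposal is correct and follows essentially the same route as the paper: specialize to the interval $\bbE=[\lmin(A),\lmax(A+\vb\vb^\ast)]$, apply Theorem~\ref{the:markov} (the paper does this via the intermediate ellipse corollary with $\rho=1$), bound $1/|\phi(\beta)|\le 1/|\phi(0)|$ by monotonicity, and evaluate the inverse Joukowski map at the origin to obtain the factor $(\sqrt{\kappa_\ast}-1)/(\sqrt{\kappa_\ast}+1)$. You are in fact slightly more careful than the paper's printed proof, which glosses over the passage from $|f'(\omega)|=|f'(\lmin(A))|$ to $|f'(0)|$; your justification via the integral representation $|f'(x)|=\int_\alpha^\beta (x-z)^{-2}\,\d\mu(z)$ and its monotonicity on $[0,\infty)$ is exactly what is needed there.
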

\begin{proof}
In the Hermitian positive definite case, $\calW(A) = [\lmin(A),\lmax(A)]$ and $\calW(A+\vb\vb^\ast) = [\lmin(A+\vb\vb^\ast),\lmax(A+\vb\vb^\ast)]$. As $\lmin(A) \leq \lmin(A+\vb\vb^\ast)$ and $\lmax(A) \leq \lmax(A+\vb\vb^\ast)$, we can thus take
$$\bbE = [\lmin(A), \lmax(A+\vb\vb^\ast)].$$
This is a special case of an ellipse~\eqref{eq:ellipse} with
\begin{equation}\label{eq:rhosigmatau}
\rho = 1,\quad\sigma = \tau = \frac{\lmax(A+\vb\vb^\ast)-\lmin(A)}{2}.
\end{equation}
Noting that $\lmin(A)$ is the smallest element in $\bbE \cap \R = \bbE$ and inserting~\eqref{eq:rhosigmatau} into~\eqref{eq:convergence_bound_markov_ellipse} gives the desired result after some simple calculations.
\end{proof}

\begin{example} \label{ex:invsqrt}
\begin{figure}
\begin{center}
\includegraphics[width=.7\textwidth]{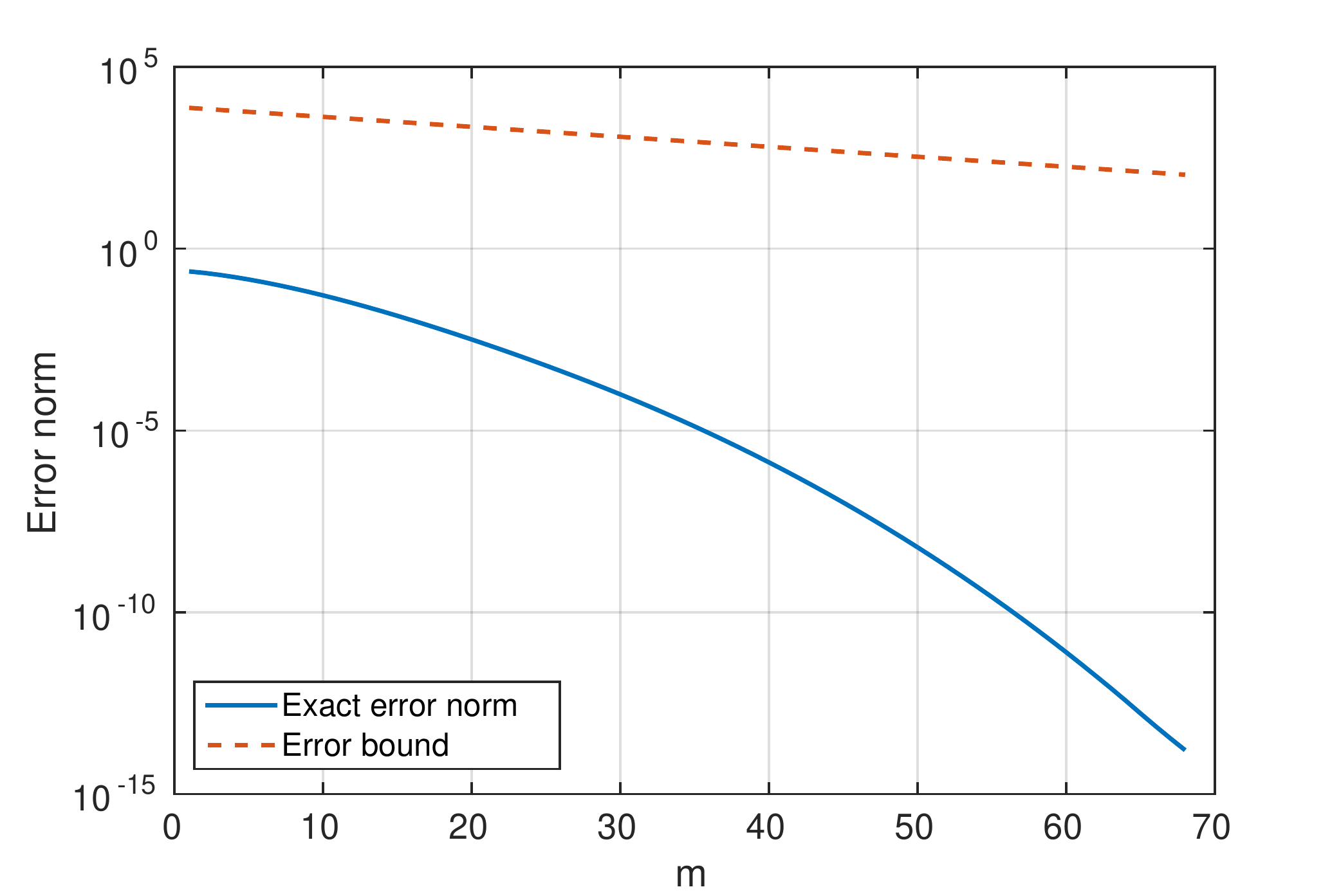}
\end{center}
\caption{Exact error norm and error bound~\eqref{eq:convergence_bound_markov_Hpd} for the update of the matrix inverse square root described in Example~\ref{ex:invsqrt}.\label{fig:invsqrt}}
\end{figure}%
To illustrate the result of Corollary~\ref{cor:markov_Hpd}, consider a diagonal matrix $A \in \R^{100 \times 100}$ with equidistantly spaced eigenvalues in the interval $[0.1,10]$. As in the previous examples, we choose $\vb$ as a random vector of unit norm, which results in $\spec(A+\vb\vb^\ast) \subseteq [0.1,10.1] =: \bbE$. We approximate $(A+\vb\vb^\ast)^{-1/2}-A^{-1/2}$ and report the resulting convergence curve, together with the error bound~\eqref{eq:convergence_bound_markov_Hpd} in Figure~\ref{fig:invsqrt}. We can observe the typical shortcoming of bounds of the form~\eqref{eq:convergence_bound_markov_Hpd} for Markov functions; they only predict linear convergence, while superlinear convergence due to spectral adaption can be observed in this case, see, e.g.,~\cite{BeckermannGuettel2012}. In addition, as we also observed for the exponential function, the large constant in the bound~\eqref{eq:convergence_bound_markov_Hpd} leads to a overestimation of the order of magnitude of the error.\hfill $\diamond$
\end{example}

\begin{remark}
We make two remarks concerning Corollary~\ref{cor:markov_Hpd}.
\begin{enumerate}
\item Similar to what we pointed out already for the exponential function, if we use known approximation results for Markov functions, like, e.g., \cite[Theorem 6.1 \& Remark 6.3]{BeckermannReichel2009} together with Theorem~\ref{the:convergence_polynomial_rational}, we obtain the same convergence factor (with a different constant) as in Corollary~\ref{cor:markov_Hpd} in the Hermitian case.
\item It is possible to obtain a slightly sharper version of~\eqref{eq:convergence_bound_markov_Hpd} in which the constant $8$ is replaced by $4$, and $\kappa_\ast$ is replaced by the maximum of the Euclidean norm condition numbers of $A$ and $A+\vb\vb^\ast$ by explicitly exploiting the connection to the conjugate gradient method~\cite{HestenesStiefel1952} and integrating over the classical CG convergence bounds, see, e.g.,~\cite[Lemma 4.1 \& Theorem 4.3]{FrommerGuettelSchweitzer2014} for a similar technique. We refrain from giving the details of this, as the improvement over the bound~\eqref{eq:convergence_bound_markov_Hpd} is quite marginal in most cases.
\end{enumerate}
\end{remark}

\section{Applications and numerical experiments}\label{sec:experiments}
In this section, we present possible applications for the developed methods, with a special focus on the computation of communicability measures in network analysis. All experiments are performed in MATLAB R2016b on a Linux machine with Intel Core i7-6700 CPU and 32 GB main memory.

\subsection{Updating network communicability measures} \label{sec:network}
Matrix functions, especially the matrix exponential, play an important role in network analysis, see~\cite{EstradaHigham2010} and the references therein.

Given an undirected graph $G = (V,E)$ with $V = \{1,\ldots,n\}$ and $E\subseteq V\times V$, we let $A \in \R^{n \times n}$ be the adjacency matrix of $G$ defined by $a_{ij} = a_{ji} = 1$ if $(i,j) \in E$ and $a_{ij} = 0$ otherwise. Note that $A$ is symmetric. 
Introduced in~\cite{Estrada2005}, the \emph{subgraph centrality} of node $i$ is given by
\begin{equation} \label{eq:subcentrality}
\frac{[\exp(A)]_{ii}}{\trace(\exp(A))}.
\end{equation}
 This quantity represents a weighted average of the number of closed walks which connect node $i$ to itself, see, e.g.,~\cite{EstradaHigham2010} for details. An interesting question is how the subgraph centralities in a graph change when an edge is added to or removed from the graph; see for example~\cite{ArrigoBenzi2016}, where the effect of adding/removing edges on the so-called \emph{total communicability} is studied. The addition of an edge $(i,j)$, with $i\not=j$, corresponds to the rank-two modification
\begin{equation}\label{eq:adjacency_update}
A + BC^\ast, \quad \text{where} \quad B = [\ve_i, \ve_j], \quad C = [\ve_j, \ve_i],
\end{equation}
of the adjacency matrix. Analogously, the removal of an edge $(i,j)$ corresponds to a  rank-two modification. In turn, these tasks fit perfectly into the framework considered in this paper. 

In our experiments:
\begin{itemize}
 \item Following~\cite{BenziEstradaKlymko2013,BenziBoito2010}, the diagonal entries of $\exp(A)$, needed for evaluating~\eqref{eq:subcentrality} for the original matrix $A$, are estimated by combining the Lanczos method with quadrature~\cite{BenziGolub1999,GolubMeurant2010}. Specifically, we use the implementation of these methods from the \texttt{mmq} toolbox~\cite{MeurantToolbox}.
 \item To handle the rank-two modification~\eqref{eq:adjacency_update} we perform two consecutive rank-one updates with Algorithm~\ref{alg:krylovhermitian}, as outlined in Remark~\ref{remark:higherrank}. Note that while $B \neq C$, the resulting matrix $BC^\ast$ is Hermitian. Thus, we can perform a preprocessing step in order to obtain two Hermitian rank-one updates and then apply Algorithm~\ref{alg:krylovhermitian}. As said before, it is not necessary to explicitly form the matrix $U_mX_m(\exp) V_m^\ast$, we only evaluate its diagonal entries.
\end{itemize}

\begin{table}
\caption{Number $n$ of nodes and number $k$ of edges of the networks used in our experiments, together with the computation time for updating the subgraph centrality of all nodes, when modifying $10$ edges, using Algorithm~\ref{alg:krylovhermitian} vs. recomputing them from scratch using the \texttt{mmq} toolbox.\label{tab:networks}}
\begin{center}
\begin{tabular}{lrrrr}
\hline
Network & $n$ & $k$ & Algorithm~\ref{alg:krylovhermitian} & \texttt{mmq} \\
\hline
Gleich/Minnesota & 2,642 & 6,606 & 0.15 s & 1.80 s\\
Pajek/Erdos992 & 6,100 & 15,030  & 0.62 s & 6.22 s\\
Pajek/USpowerGrid & 4,941 & 13,188 & 0.48 s & 4.96 s \\
SNAP/ca-HepTh & 9,877 & 51,971 & 1.25 s & 14.58 s\\
SNAP/email-Enron & 36,692 & 367,662 & 1.96 s& 147.47 s \\
\hline
\end{tabular}
\end{center}
\end{table}

We now suppose that the diagonal of the matrix exponential has been computed beforehand (in an expensive \emph{offline} calculation). We then add or remove edges from the graph and compare the computation time needed for updating them using Algorithm~\ref{alg:krylovhermitian} to the time needed for recomputing them from scratch using the \texttt{mmq} toolbox. When using the \texttt{mmq} toolbox, we perform five Lanczos iterations per diagonal entry (this value is, e.g., also used in~\cite{BenziBoito2010}) and can be expected to give a rather rough approximation of the exact value. When using Algorithm 1, we aim for an accuracy of $10^{-6}$ according to the stopping criterion from Section~\ref{subsec:stopping}, evaluated with $d = 2$.
We observed that typically between 10 and 30 iterations of the algorithm are necessary to fulfill this criterion.
We use five different networks available from the SuiteSparse Matrix Collection~\cite{SuiteSparse}, and for each network we randomly choose ten edges that are added to or removed from the network (i.e., in order to compute the update, we have to call Algorithm~\ref{alg:krylovhermitian} a total of $20$ times). Table~\ref{tab:networks} summarizes the obtained results. For all networks under consideration we observe that our algorithm clearly outperforms recomputation. 

\begin{figure}
\begin{center}
\includegraphics[width=.65\textwidth]{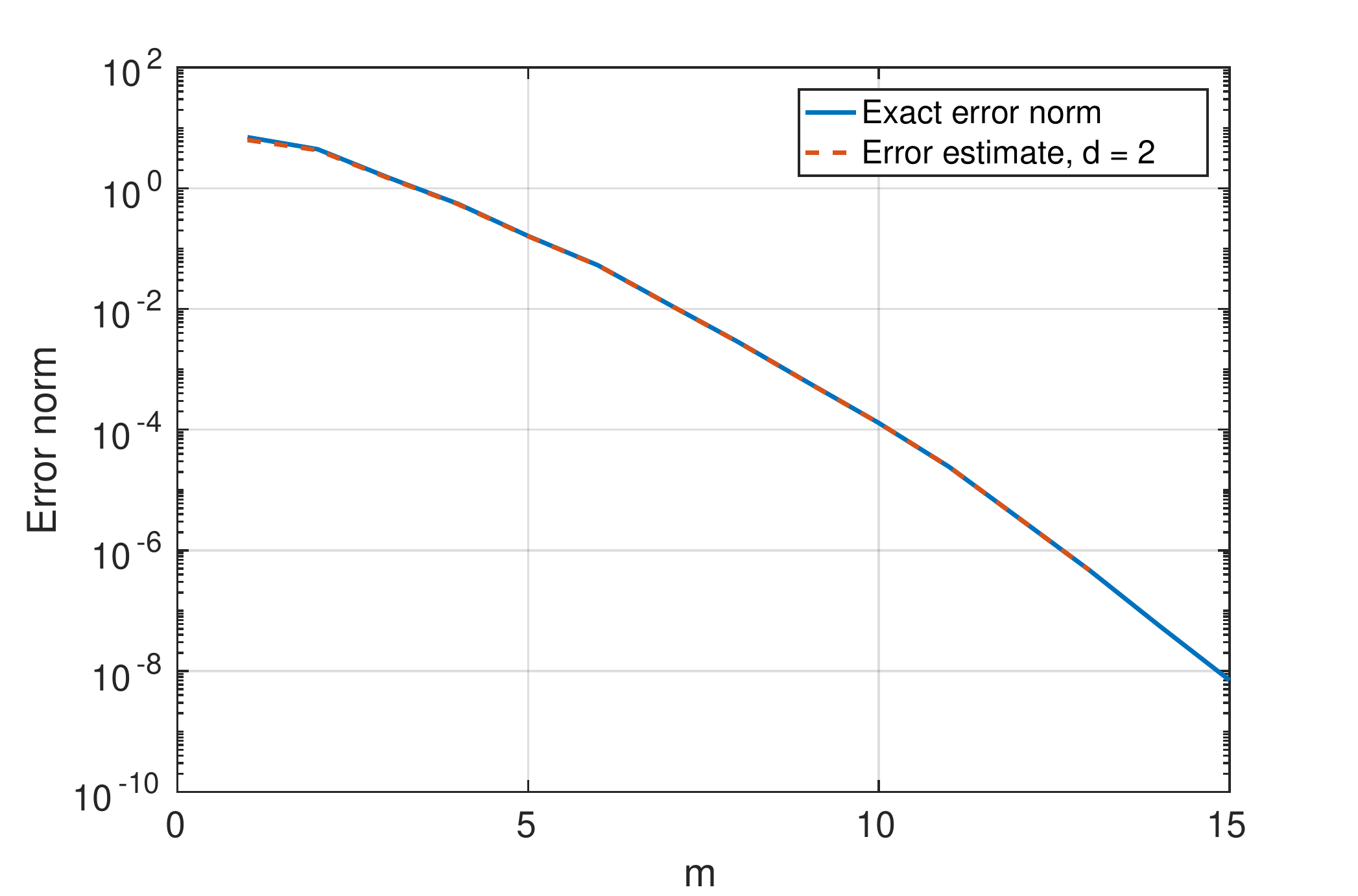}
\end{center}
\caption{Convergence curve of Algorithm~\ref{alg:krylovnonhermitian} for computing a rank-one update of the matrix \texttt{Pajek/USpowerGrid} together with the error estimate~\eqref{eq:error_estimate_difference}.\label{fig:convergence_powergrid}}
\end{figure}

To also illustrate how the convergence profile of the method looks like and to judge the quality of the error estimate~\eqref{eq:error_estimate_difference} in a practical situation, we show the convergence curve of the first update performed for the network \texttt{Pajek/USpowerGrid} together with the error estimate in Figure~\ref{fig:convergence_powergrid}.
We can observe that the method converges very smoothly and that the error estimate is almost identical to the exact error norm for all iterations. 

\subsection{One-dimensional convection diffusion equation}\label{sec:convdiff}

\begin{figure}
\begin{minipage}{.49\textwidth}
\includegraphics[width=\textwidth]{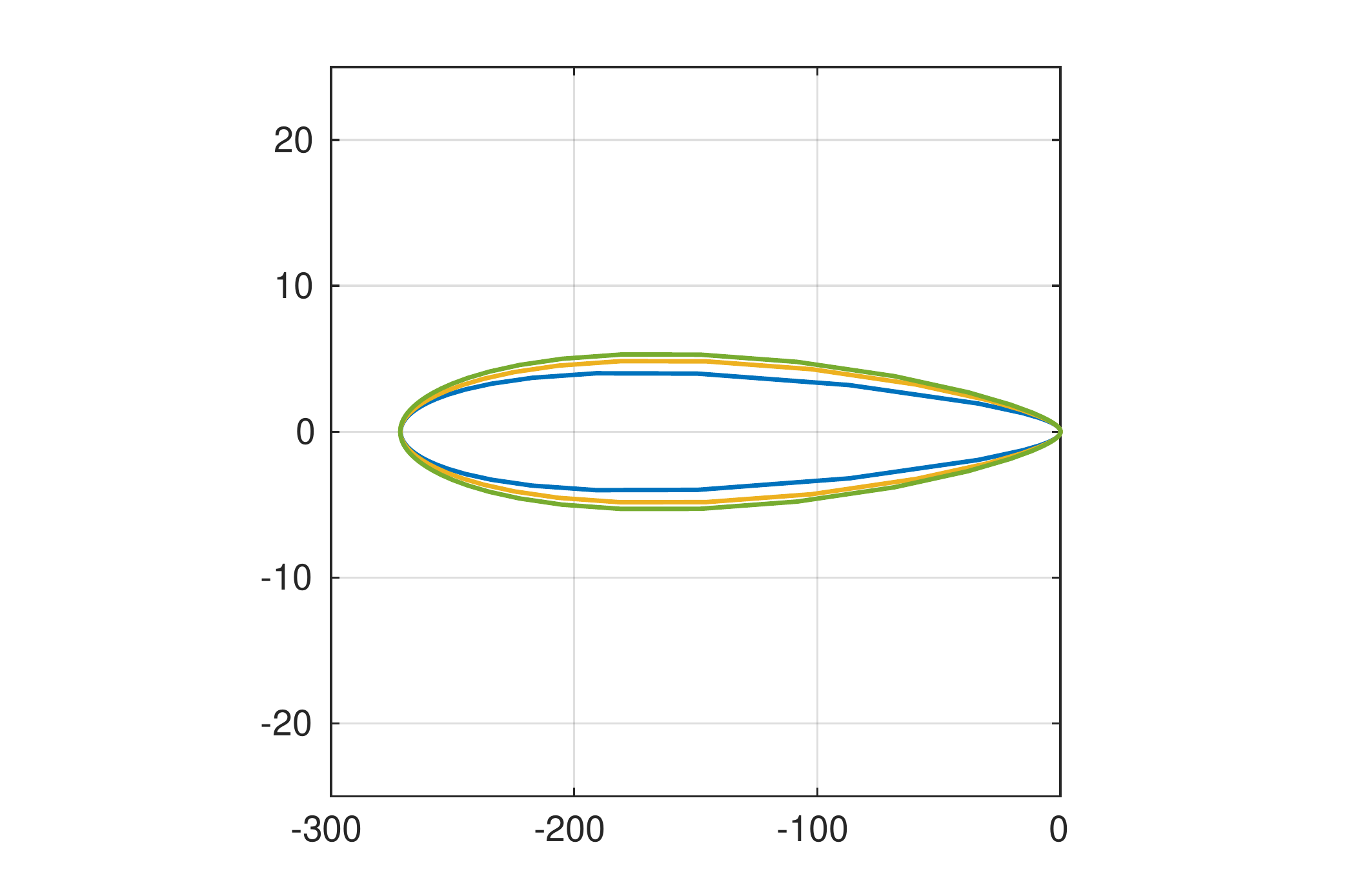}
\end{minipage}
\begin{minipage}{.49\textwidth}
\includegraphics[width=\textwidth]{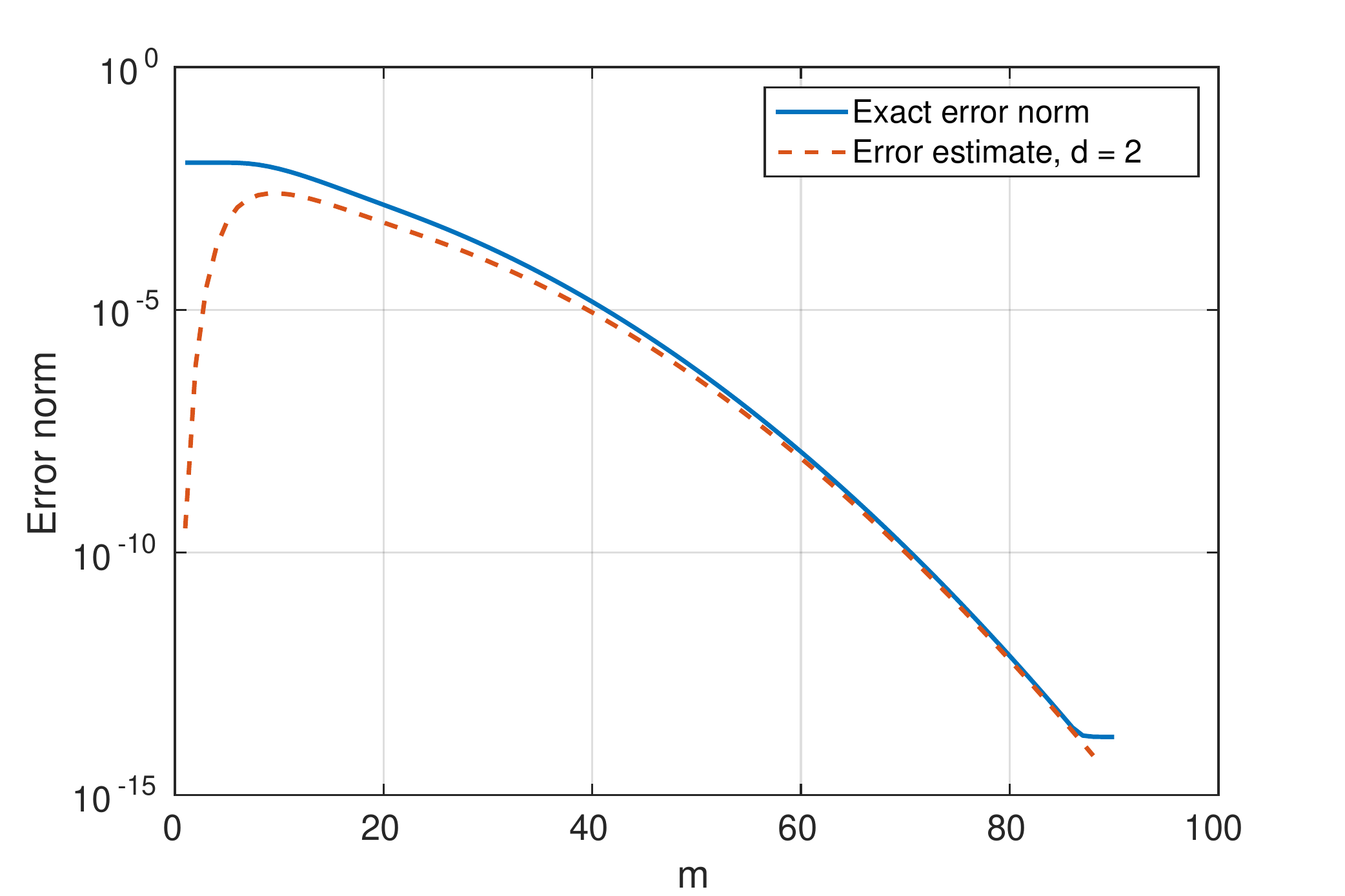}
\end{minipage}
\begin{minipage}{.49\textwidth}
\includegraphics[width=\textwidth]{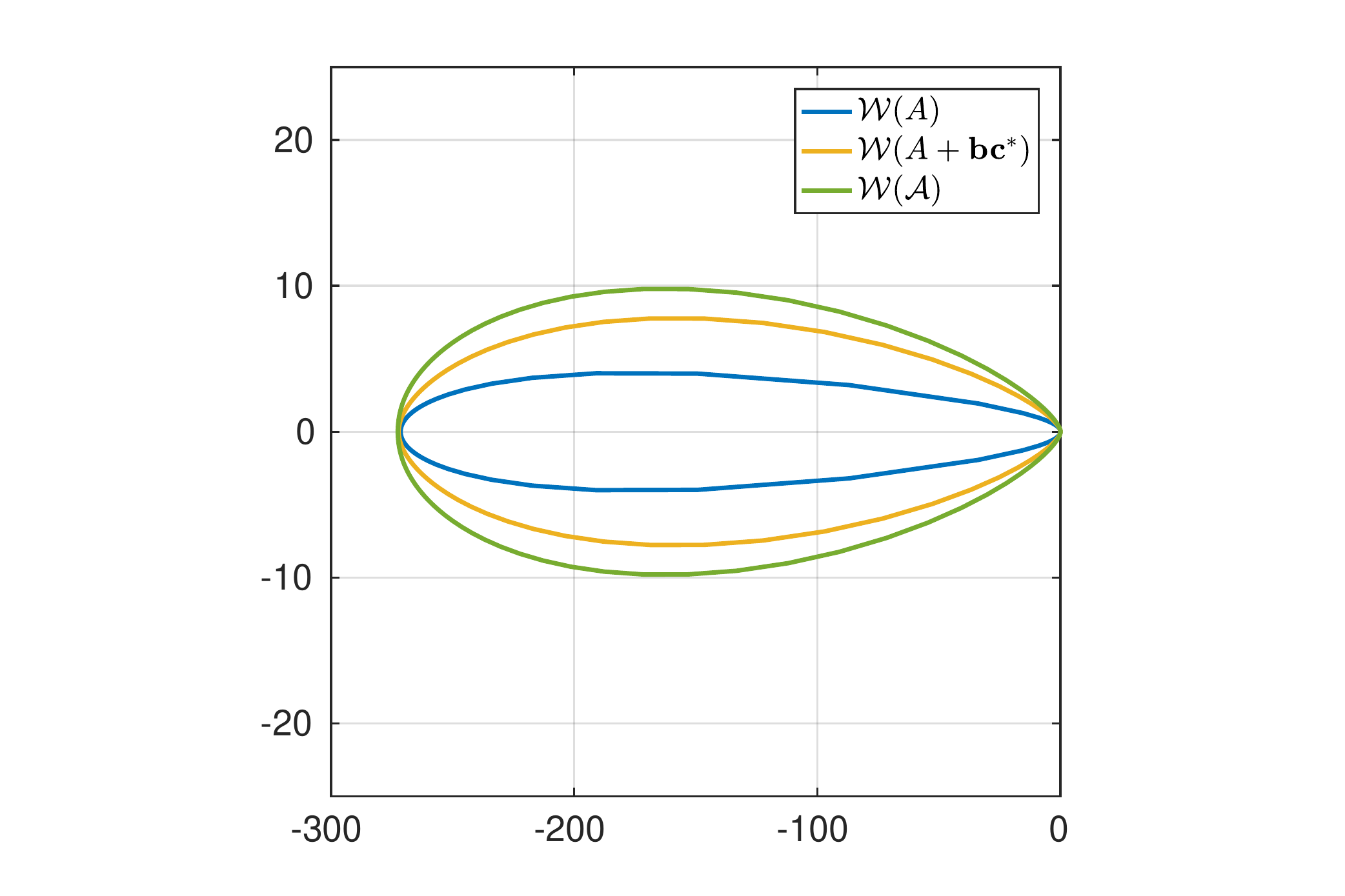}
\end{minipage}
\begin{minipage}{.49\textwidth}
\includegraphics[width=\textwidth]{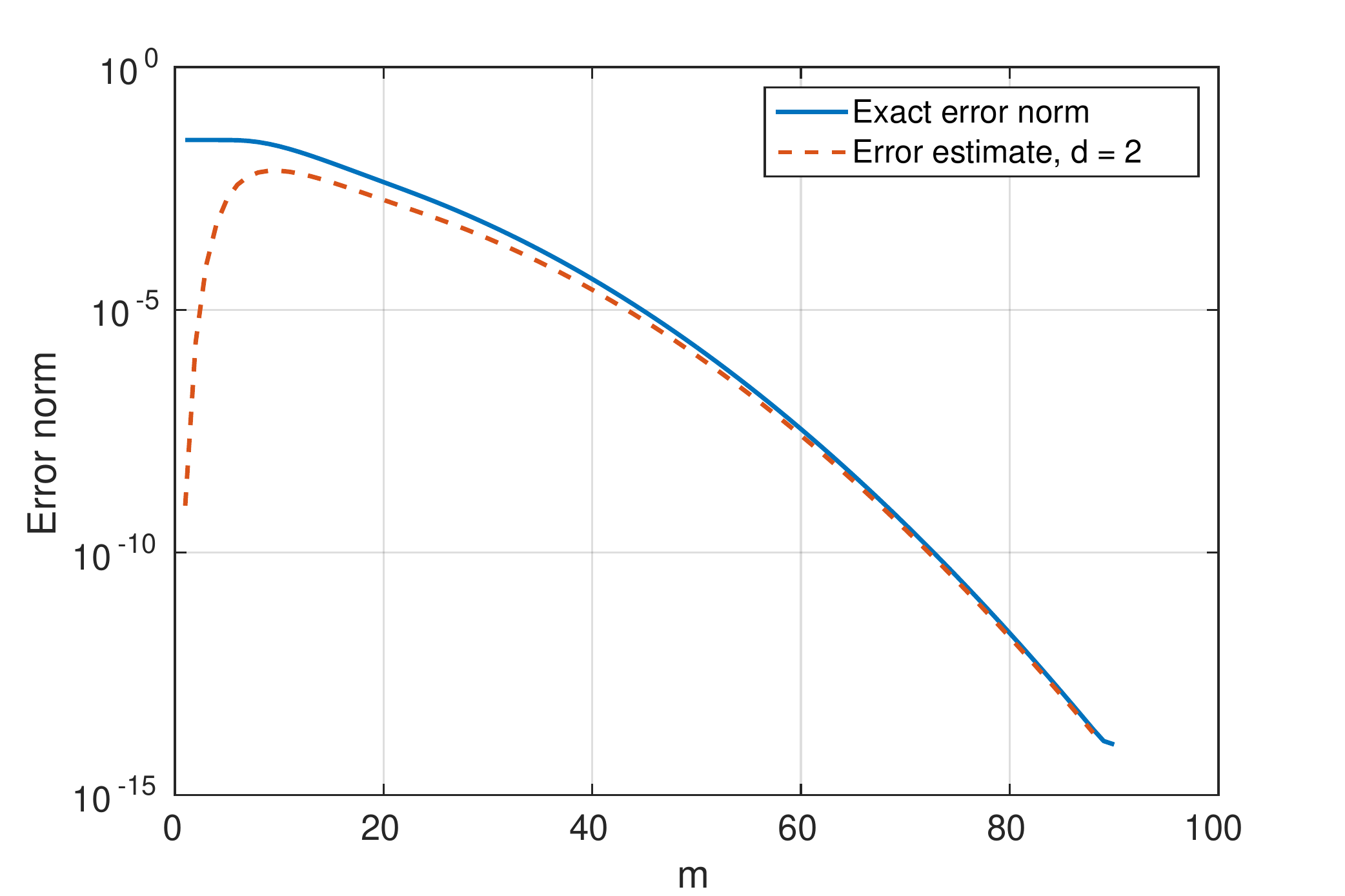}
\end{minipage}
\begin{minipage}{.49\textwidth}
\includegraphics[width=\textwidth]{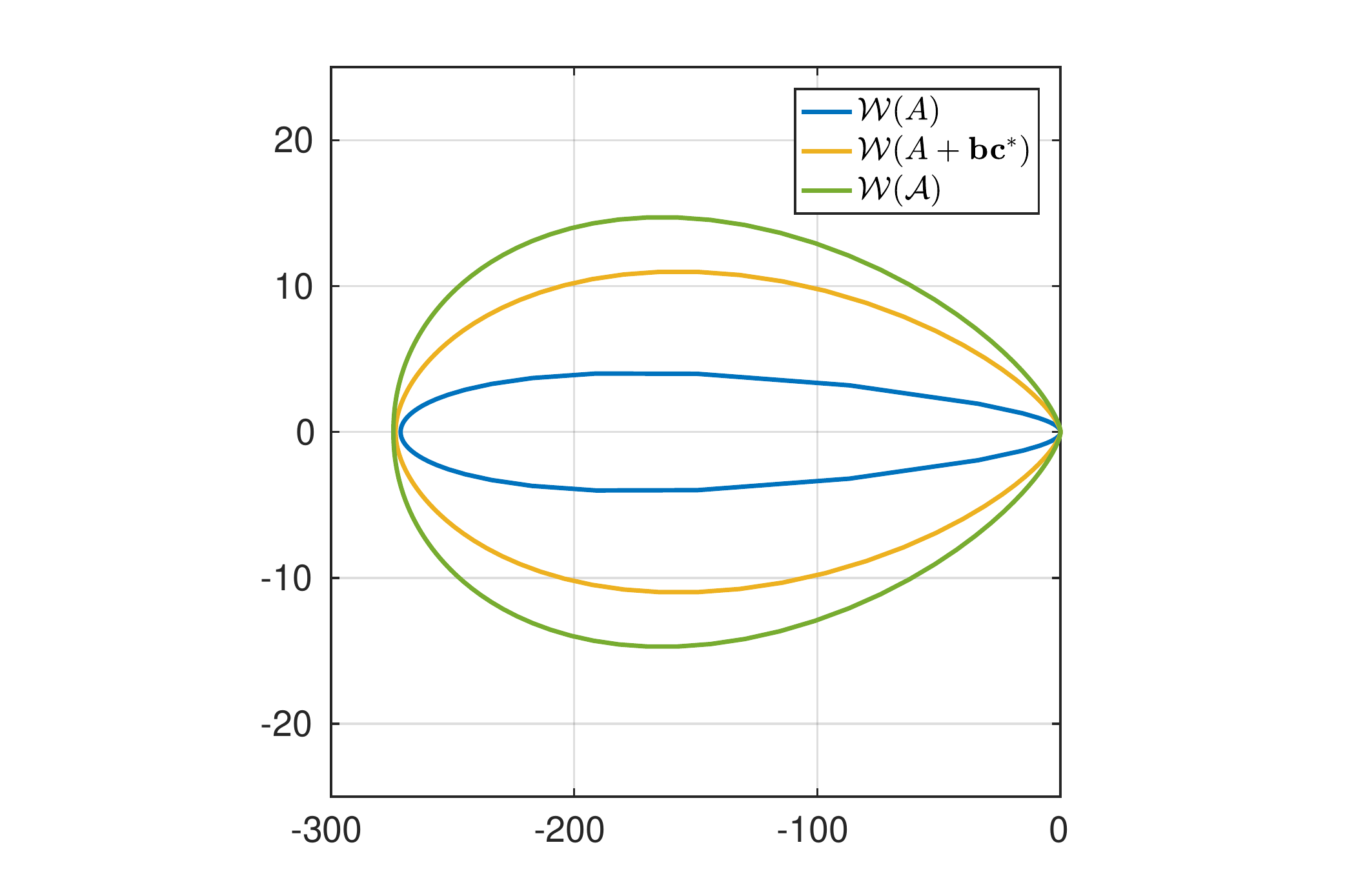}
\end{minipage}
\begin{minipage}{.49\textwidth}
\includegraphics[width=\textwidth]{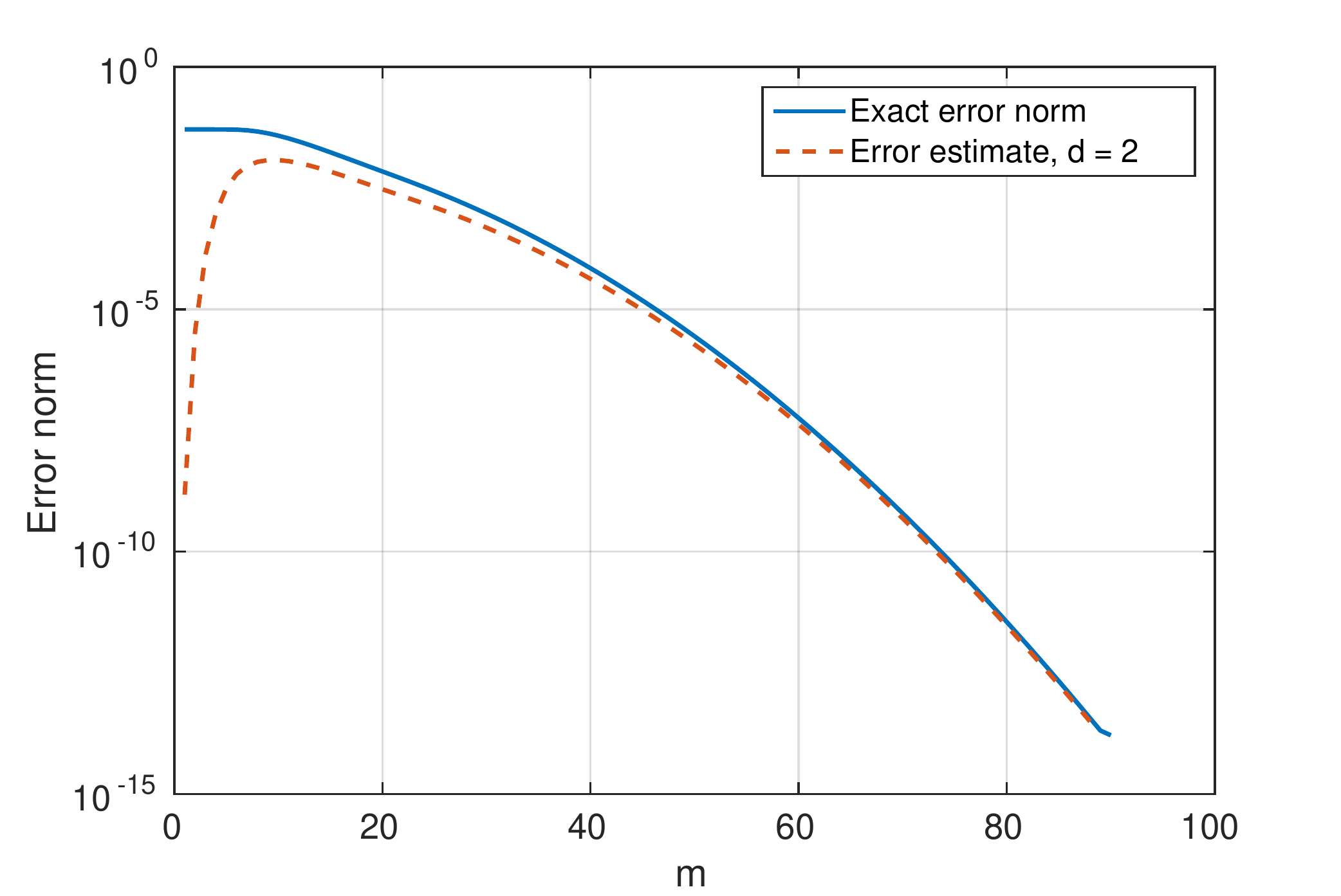}
\end{minipage}
\caption{Field of values (left) for the modifications of the discretized convection diffusion operator corresponding to $\widetilde{c} = 20,40,60$ (top to bottom) and corresponding convergence curves (right) for approximating $\exp(A+\vb\vc^\ast)-\exp(A)$. \label{fig:convdiff}}
\end{figure}

In the following, we illustrate the convergence of Algorithm~\ref{alg:krylovnonhermitian} for a non-Hermitian example.
Consider the one-dimensional convection diffusion equation
\begin{align*}
u^{\prime\prime} - c \cdot u^\prime &= f \text{ in } [0,1], \\
                             u(0) = u(1) &= 1,
\end{align*}
discretized by centered finite differences combined with a second-order scheme for the convection term. We choose the convection coefficient to be $c = 10$ and discretize the equation on 256 interior grid points. We then consider the rank-one modifications $A+\vb\vc^\ast$ of the discretized operator that change the convection coefficient $c$ in the middle, at the $128$th grid point, to (a) $\widetilde{c} = 20$, (b) $\widetilde{c} = 40$, (c) $\widetilde{c} = 60$. On the left-hand side of Figure~\ref{fig:convdiff}, the field of values of $A$, $A+\vb\vc^\ast$ and the matrix $\calA$ from~\eqref{eq:calA} is shown for these modifications, and on the right-hand side the convergence history of Algorithm~\ref{alg:krylovnonhermitian} for approximating $\exp(A+\vb\vc^\ast)-\exp(A)$ is given.

As shown in Figure~\ref{fig:convdiff}, the field of values of $A$ lies in a wedge-like set in left half-plane, as considered in Corollary~\ref{cor:wedge}, with a rather small inner angle at the origin. The angle increases when considering $A+\vb\vc^\ast$ and this increase becomes more pronounced as $\widetilde{c}$ grows. In light of Corollary~\ref{cor:wedge}, we would thus expect slower convergence for larger values of $\widetilde{c}$. However, as the convergence curves on the right-hand side of Figure~\ref{fig:convdiff} show, the convergence behavior is virtually identical for all three test cases. Therefore, our convergence estimates cannot be expected to be sharp in this case. We also report the shape of $\calW(\calA)$ in Figure~\ref{fig:convdiff} in order to illustrate the superiority of the approach from Section~\ref{sec:convergence_integral} over the result of Theorem~\ref{the:convergence_polynomial_rational_nonhermitian}, which depends on the field of values of $\calA$. Comparing $\calW(A+\vb\vc^\ast)$ and $\calW(\calA)$, we observe that the latter set is substantially larger, especially for large values of $\widetilde{c}$. An especially unfavorable case appears for $\widetilde{c} = 60$, where it turns out that $\calW(\calA)$ is \emph{not} a subset of the left complex half-plane anymore (and contains $0$), unlike $\calW(A)$ and $\calW(A+\vb\vc^\ast)$. For an entire function like the exponential function, this only leads to worse convergence bounds. For other functions, like the inverse square root having a singularity at $0$, this can be more problematic, to the extent that Theorem~\ref{the:convergence_polynomial_rational_nonhermitian} is not applicable.

Figure~\ref{fig:convdiff} also shows the difference-based error estimate~\eqref{eq:error_estimate_difference} for $d=2$. While this estimate closely follows the exact error curve in later iterations, it severely underestimates the error in the first few iterations. This is due to the fact that the method almost stagnates in these iterations, and is a typical shortcoming of difference-based error estimates.

\section{Conclusions}\label{sec:conclusion}
We have proposed Krylov subspace methods for approximating $f(A+D)-f(A)$ for a low-rank matrix $D$. We proved that the resulting approximation is exact for a polynomial of a certain degree and used this to derive a variety of convergence results, which either link the convergence of our method to polynomial approximation problems or exploit results on the error in the full orthogonalization method in conjunction with an integral representation of $f$. 

In numerical experiments--in particular on applications from network analysis---we have illustrated that our approach can dramatically reduce the cost of computing $f(A+D)$ or portions thereof.
We expect that our algorithms will prove useful in other application areas and we will explore this in future work.
Another interesting topic for future research is the use of extended and rational Krylov subspaces in the proposed method and an analysis of the resulting convergence behavior.

\bibliographystyle{siam}
\bibliography{matrixfunctions}

\end{document}